\newcommand{\matindex}[1]{\mbox{\scriptsize#1}}
\newcommand{\xdownarrow}[1]{%
  {\left\downarrow\vbox to #1{}\right.\kern-\nulldelimiterspace}
}
\newcommand{\quash}[1]{}  
\newcommand{\M}{{\rm M}}
\newcommand{\loc}{{\rm loc}}
\newcommand{\spl}{{\rm spl}}
\newcommand{\bl}{{\rm bl}}
\newcommand{\Kra}{{\rm Kra}}
\newcommand{\nspl}{{\rm nspl}}
\newcommand{\NExc}{{\rm NExc}}
\newcommand{\Exc}{{\rm Exc}}
\newcommand{\ZZ}{\mathbb{Z}}
\newcommand{\PP}{\mathbb{P}}
\newcommand{\calF}{\mathcal{F}}
\newcommand{\calL}{\mathcal{L}}
\newcommand{\calM}{\mathcal{M}}
\newcommand{\calU}{\mathcal{U}}
\newcommand{\calN}{\mathcal{N}}
\newcommand{\calZ}{\mathcal{Z}}
\newcommand{\calY}{\mathcal{Y}}
\newcommand{\scrG}{\mathscr{G}}
\newcommand{\bb }{\langle}
\newcommand{\pp}{\rangle}
\newcommand{\rightarroweq}{\stackrel{\sim}{\rightarrow}}
\newcommand{\br}{\breve}
\numberwithin{equation}{subsection}
\newtheorem{Theorem}{Theorem}[section]
\newtheorem{Remark}[Theorem]{Remark}
\newtheorem{Remarks}[Theorem]{Remarks}
\newtheorem{Proposition}[Theorem]{Proposition}
\newtheorem{Corollary}[Theorem]{Corollary}
\newtheorem{Main Theorem}[Theorem]{Main Theorem}
\newtheorem{Definition}[Theorem]{Definition}
\renewcommand*\env@matrix[1][*\c@MaxMatrixCols c]{%
  \hskip -\arraycolsep
  \let\@ifnextchar\new@ifnextchar
  \array{#1}}
\newif\ifgrading
\newcommand{\U}{{\mathcal U}}
\newcommand{\calG}{{\mathcal G}}
\newcommand{\Spec}{{\rm Spec \, } }
\newcommand{\Addresses}{{
		\bigskip
		\footnotesize
		\textsc{Department of Mathematics, Universität Münster, Münster, 48149, Germany}\par\nopagebreak
		\textit{E-mail address:} \texttt{io.zachos@uni-muenster.de}\\
		
	\textsc{Department of Applied Mathematics, University of Science and Technology Beijing,
			Beijing, 100083, China}\par\nopagebreak
		\textit{E-mail address:} \texttt{zhihaozhao@ustb.edu.cn}
}}	
\begin{document}
\title[Regular Ramified Unitary Rapoport--Zink Spaces]{The Basic Locus of Regular Ramified Unitary Rapoport--Zink Spaces at Vertex--Stabilizer Level}
	\date{}
	\author{I. Zachos and Z. Zhao}
		
	\begin{abstract}
We construct the Bruhat--Tits stratification of the reduced basic locus of regular ramified unitary Rapoport--Zink spaces of signature $(n\!-\!1,1)$ at vertex--stabilizer level. To study the Bruhat--Tits strata, we introduce strata models--simpler models that are \'{e}tale-locally isomorphic to each stratum. They admit two complementary characterizations: (i) as strict transforms under the blow-up of the local model at its worst point, and (ii) via a partial moduli description given by explicit linear-algebraic conditions; 
from these we deduce smoothness, explicit dimension formulas and irreducibility of the Bruhat--Tits strata.
	\end{abstract}
\maketitle	
	\tableofcontents

\section{Introduction}\label{Intro}
\subsection{} This work contributes to the theory of integral models of Shimura varieties by giving a concrete description of the reduced basic locus of regular ramified unitary Rapoport--Zink (RZ) spaces at a maximal vertex level with signature $ (n-1,1)$. Over roughly the last twenty years, the geometry of the basic locus of Shimura varieties has been explored in many settings both orthogonal and unitary and now forms a substantial literature; we refer to the introductions of \cite{Vol10}, \cite{Wu} and \cite{HZ} for more historical content and background. Beyond its intrinsic interest, it has driven progress on many important arithmetic intersection problems such as the Kudla--Rapoport conjecture in which the computation of intersection numbers of cycles in the basic locus forms a crucial input (see \cite{CZ1,CZ2,HLSY,LL,Yao,Luo2}). The explicit description of the basic locus has also contributed to the arithmetic fundamental lemma and the arithmetic transfer conjecture (see, for example,  \cite{RSZ,Zhang,LMZ,LRZ}).

For the general unitary group ${\rm GU}(n-1,1)$, the basic locus of the RZ space was first studied by Vollaard \cite{Vol10} and Vollaard--Wedhorn \cite{VW} at an inert prime with hyperspecial level. In these works it is shown that the basic locus admits a 
stratification by (generalized) Deligne--Lusztig varieties and the incidence of strata is governed by some Bruhat–Tits building. 
This is the so-called \textit{Bruhat--Tits (BT) stratification}. G\"ortz--He--Nie \cite{GHN} subsequently gave a classification of Shimura varieties whose basic locus admits a BT stratification. More recently, Muller \cite{Muller} extended the results of Vollaard–Wedhorn to arbitrary parahoric level.

For the ramified unitary case of signature $(n-1,1)$, which is the case of interest here, the basic locus was studied by Rapoport--Terstiege--Wilson \cite{RTW} at self-dual level, by Wu \cite{Wu} for the exotic smooth cases, and more recently by He--Luo--Shi \cite{HLS2}, who extended the results to all maximal vertex levels. These RZ spaces have an explicit moduli description and this is a key tool for the study of their basic locus. Moreover, except for the two exotic smooth cases studied in \cite{Wu}, they have bad reduction at every maximal vertex level. 
For the self-dual level, in order to resolve the singularities and obtain a semistable model, a variation of this moduli problem was obtained by Krämer \cite{Kr} where she added to the moduli problem an additional linear datum of a flag of the Hodge filtration with certain restrictive properties. This model is commonly referred to as the Krämer model (or splitting model). In the course of proving the Kudla–Rapoport conjecture in the self-dual case, He--Li--Shi--Yang \cite{HLSY} determined the basic locus for the Krämer model. 

For more general vertex levels the plot is thicker since the natural generalization of the Krämer model as suggested in the work of Pappas--Rapoport \cite{PR2} gives a non-flat model. However, in \cite{HLS2}, He--Luo--Shi proved that its flat closure has semistable reduction. In this paper we give a concrete description of the reduced basic locus of the corresponding Rapoport–Zink space for any maximal vertex lattice. We expect that our construction will have applications to the aforementioned arithmetic intersections problems.
\subsection{} 
Let us give some details. We work over a ramified extension $F/F_0$ of $p$-adic fields (with $p$ odd), residue field $k$, and uniformizers $\pi,\pi_0$ satisfying $\pi^2=\pi_0$. Fix an algebraic closure $\bar{k}$ of $k$, and write $\breve F$ for the completion of the maximal unramified extension of $F$ with ring of integers $O_{\breve F}$. Let $0\le h<\tfrac{n}{2}$ be the index of the maximal vertex level. (See Remark \ref{rk 3.4} (3) for the case $n=2m$ and $h=m$.)

Fix a supersingular hermitian $O_F$-module $(\mathbb{X},\iota_{\mathbb{X}},\lambda_{\mathbb{X}})$ over $\mathrm{Spec}\,\bar k$ of rank $n$ and type $2h$ (signature $(n-1,1)$) as the framing object (see §\ref{sec2.2}). For $S\in\mathrm{Nilp}\, O_{\breve{F}}$, define $\calN_n^{\mathrm{Kra}}(S)$ to consist of isomorphism classes of $(X,\iota,\lambda,\rho,\mathrm{Fil}_0(X),\mathrm{Fil}_0(X^\vee))$ where $(X,\iota,\lambda)$ is a hermitian $O_F$-module over $S$ of signature $(n-1,1)$ and type $2h$, $\rho:X_{\bar S}\to\mathbb{X}_{\bar S}$ is an $ O_F$-linear quasi-isogeny of height $0$ with $\rho^*(\lambda_{\mathbb{X},\bar S})=\lambda_{\bar S}$, and $\mathrm{Fil}_0(X)\subset\mathrm{Fil}(X)\subset\mathbb D(X)$ and $\mathrm{Fil}_0(X^\vee)\subset\mathrm{Fil}(X^\vee)\subset\mathbb D(X^\vee)$ are rank-one locally free direct summands satisfying the \textit{transition conditions}
    \[
\lambda (\operatorname{Fil}^0(X)) \subset \operatorname{Fil}^0(X^\vee), \quad (\text{resp.} \quad \lambda^\vee (\operatorname{Fil}^0(X^\vee))\subset \operatorname{Fil}^0(X))
     \]
and \textit{splitting conditions}
    \[  (\iota(\pi)+\pi) (\operatorname{Fil}(X)) \subset \operatorname{Fil}^0(X), \quad  (\iota(\pi)-\pi) (\operatorname{Fil}^0(X))=0
    \]  
    \[ (\text{resp}.\quad  (\iota(\pi)+\pi) (\operatorname{Fil}(X^\vee)) \subset \operatorname{Fil}^0(X^\vee), \quad  (\iota(\pi)-\pi) (\operatorname{Fil}^0(X^\vee))=0).
    \]
We write $\calN_n^{\mathrm{Kra}}$ for the resulting formal scheme over $\operatorname{Spf} O_{\breve F}$ and this is the Krämer RZ space. By the local model diagram, $\calN_n^{\mathrm{Kra}}$ is \'etale locally isomorphic to the \textit{naive splitting model} $\M^{\nspl,[2h]}_n$. As observed in \cite[Remark 3.3]{ZacZhao2} and later proved in \cite{HLS1}, $\M^{\nspl,[2h]}_n$, and so $\calN_n^{\mathrm{Kra}}$, fails to be flat in general. To remedy this non-flatness, one can either try to modify the moduli problem and obtain a flat model, or define the splitting model as the scheme theoretic closure of the generic fiber of $\M^{\nspl,[2h]}_n$ in $\M^{\nspl,[2h]}_n$. 

The first route was taken in \cite{ZacZhao2}. The corresponding RZ space  is defined analogously to \(\mathcal{N}_n^{\rm Kra}\), where we impose only the additional filtration \(\operatorname{Fil}^0(X)\subset \operatorname{Fil}(X)\) (omitting \(\operatorname{Fil}^0(X^\vee)\)) together with the corresponding splitting conditions. This space is flat, normal and \'etale locally isomorphic to the splitting model $\mathscr{M}^{\spl,[2h]}_n$ which has an explicit moduli-theoretic description (see the Appendix for the definition). Its BT stratification was studied in \cite{ZacZhao3}. 
 
The latter route was taken in \cite{HLS1}, where the authors defined $\M^{\spl,[2h]}_n$ to be the {\it splitting model} as the scheme-theoretic closure of the generic fiber $\M^{\nspl,[2h]}_n\otimes_{O_F}F$ in $\M^{\nspl,[2h]}_n$. The corresponding RZ space $\mathcal{N}^\spl_n$ is a linear modification of $\mathcal{N}_n^{\rm Kra}$ in the sense of \cite[\S 2]{P} and there is a local-model diagram connecting $\mathcal{N}^\spl_n$ to the splitting model $\M^{\spl,[2h]}_n$. By \cite[Theorem 1.3.1]{HLS1}, $\M^{\spl,[2h]}_n$ is flat with semi-stable reduction; hence the same holds for $\mathcal{N}^\spl_n$. Here, we want to mention that since $\M^{\spl,[2h]}_n$ is defined via scheme-theoretic closure, it does not come equipped with an explicit moduli-theoretic description a priori (though in special cases one can recover such a description; see Remark \ref{rk 3.4}). Furthermore, we want to highlight that the splitting model $\M^{\spl, [2h]}_n$ is actually the blow-up of the canonical local model $\M^{\loc, [2h]}_n$ described in \cite{Luo} along the worst point $*$, i.e.  the only closed Schubert cell that lies in the special fiber of the local model (see \S \ref{RSpl}). Denote by $\mathcal{N}_n$ the corresponding RZ space of the local model. The BT stratification of $\mathcal{N}_n$ was studied in \cite{HLS2}.

In \cite{ZacZhao2}, we define $\mathscr{M}^{\bl,[2h]}_n$ as the blow-up of the inverse image in $\mathscr{M}^{\spl,[2h]}_n$ of the worst point of $\M^{\loc, [2h]}_n$ under the projective forgetful morphism $\tau':\mathscr{M}^{\spl,[2h]}_n\to\M^{\loc, [2h]}_n$ (see the Appendix). In the Appendix, we also show that $\mathscr{M}^{\bl,[2h]}_n$ is isomorphic to $\M^{\spl,[2h]}_n$, thereby connecting the above splitting-model variants.

Next, to the triple $(\mathbb{X},\iota_{\mathbb{X}},\lambda_{\mathbb{X}})$ there exists a hermitian space $C$ over $F$ of dimension $n$. Consider a lattice $\Lambda\subset C$ and its dual lattice $\Lambda^\sharp$ with respect to the hermitian form on $C$. We call $\Lambda\subset C$ a vertex lattice if it satisfies 
\[
\pi \Lambda^{\sharp} \subset \Lambda \subset  \Lambda^{\sharp}.   
\]
We denote by $t(\Lambda):=\dim(\Lambda^\sharp/\Lambda)$ the type of a vertex lattice, which is an even integer (see \S \ref{sec2.2}). By abuse of notation, we will write $2t$ instead of $t(\Lambda)$. Let $L_{\mathcal{Z}}$ denote the set of all vertex lattices of type $2t \geq 2h$, and let $L_{\mathcal{Y}}$ denote the set of all vertex lattices of type $2t\leq 2h$. For each $\Lambda_1 \in L_{\mathcal{Z}}$ and $\Lambda_2 \in L_{\mathcal{Y}}$, we define closed subschemes $\mathcal{Z}(\Lambda_1)$ and $\mathcal{Y}(\Lambda_2^\sharp)$ of the special fiber of the RZ space $\overline{\calN}_{n}^\spl$ (see \S \ref{BT_strata}). The main result of the paper is the following theorem which is proved in \S \ref{BTstrat.}. 

\begin{Theorem}\label{Thm BT intro}
The Bruhat-Tits stratification of the reduced basic locus is
\begin{equation}\label{BTstrintro}
      \calN_{n, {\rm red}}^\spl = \left( \bigcup_{\Lambda_1 \in L_{\mathcal{Z}} }\mathcal{Z}(\Lambda_1) \right) \cup \left( \bigcup_{\Lambda_2 \in L_{\mathcal{Y}}} \mathcal{Y}(\Lambda_2^\sharp) \right).
  \end{equation}
\begin{enumerate}
  \item  
   These strata satisfy the following inclusion relations:
  \begin{itemize}
    \item[(i)] For any $\Lambda_1, \Lambda_2 \in L_{\mathcal{Z}}$ of type greater than $2h$, if 
    $  \Lambda_1 \subseteq \Lambda_2$ then $\mathcal{Z}(\Lambda_2) \subseteq \mathcal{Z}(\Lambda_1)$.    
    \item[(ii)] For any $\Lambda_1, \Lambda_2 \in  L_{\mathcal{Y}}$ of type less than $2h$, if  
    $  \Lambda_1 \subseteq \Lambda_2$ then $\mathcal{Y}(\Lambda^{\sharp}_1) \subseteq \mathcal{Y}(\Lambda^{\sharp}_2)$.  

    \item[(iii)] For any $\Lambda_1\in L_{\mathcal{Z}}$ of type greater than $2h$, $\Lambda_2 \in  L_{\mathcal{Y}}$ of type less than $2h$, $  \Lambda_1 \subseteq \Lambda_2$ if and only if the intersection $\mathcal{Z}(\Lambda_1) \cap \mathcal{Y}(\Lambda_2^\sharp)$ is non-empty.
  \end{itemize}

  \item In the following, assume that $\Lambda, \Lambda'$ are vertex lattices of type $ 2t$ with $t \neq h$, and $\Lambda_0, \Lambda'_0$ are vertex lattices of type $2t$ with $ t=h$. 
  \begin{itemize}
    \item[(i)] The intersection $\mathcal{Z}(\Lambda) \cap \mathcal{Z}(\Lambda')$ (resp. $\mathcal{Y}(\Lambda^\sharp) \cap \mathcal{Y}(\Lambda^{\prime \sharp)}$) is non-empty if and only if $\Lambda'' = \Lambda + \Lambda'$ (resp. $\Lambda''=\Lambda \cap \Lambda'$) is a vertex lattice; in which case we have $\mathcal{Z}(\Lambda) \cap \mathcal{Z}(\Lambda') = \mathcal{Z}(\Lambda'')$ (resp. $\mathcal{Y}(\Lambda^\sharp) \cap \mathcal{Y}(\Lambda^{\prime \sharp}) = \mathcal{Y}(\Lambda^{\prime \prime \sharp})$).

    \item[(ii)] The intersection $\mathcal{Z}(\Lambda_0) \cap \mathcal{Z}(\Lambda_0')$ (or $\mathcal{Y}(\Lambda_0^\sharp) \cap \mathcal{Y}(\Lambda_0^{\prime \sharp})$) is always empty if $\Lambda_0 \ne \Lambda'_0$.
    
    \item[(iii)] The intersection $\mathcal{Z}(\Lambda) \cap \mathcal{Z}(\Lambda_0)$ (resp.  $\mathcal{Y}(\Lambda^\sharp) \cap \mathcal{Y}(\Lambda_0^\sharp)$) is non-empty if and only if $\Lambda \subset \Lambda_0$ (resp. $\Lambda_0 \subset \Lambda$), in which case $\mathcal{Z}(\Lambda) \cap \mathcal{Z}(\Lambda_0)$ (resp.  $\mathcal{Y}(\Lambda^\sharp) \cap \mathcal{Y}(\Lambda_0^\sharp)$) is isomorphic to $\PP_k^{t-h-1}\times \PP_k^{2h}$ (resp. $\PP_k^{h-t-1}\times \PP_k^{n-2h-1}$).
    
    \item[(iv)] The BT-strata $\mathcal{Z}(\Lambda_0)$ and $\mathcal{Y}(\Lambda_0^\sharp)$ are each isomorphic to the blow-up of $\PP_k^{n-1}$ along $\PP_k^{2h-1}$.
   \end{itemize}
\end{enumerate}
\end{Theorem}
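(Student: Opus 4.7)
The plan is to work étale-locally via the strata models introduced earlier in the paper, bootstrapping from the BT stratification of $\calN_{n,{\rm red}}$ established by He--Luo--Shi in \cite{HLS2}. Since $\calN_n^\spl$ is a blow-up of $\calN_n$ at the worst points, the canonical projection $\calN_n^\spl \to \calN_n$ tracks how each existing lattice stratum transforms, while the strata-model moduli give the precise local geometry. The argument splits into three stages: (i) derive the decomposition (\ref{BTstrintro}); (ii) verify the incidence relations in (1) and (2)(i)--(ii); (iii) compute the new blow-up geometry in (2)(iii)--(iv).

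For (i), I would start from the stratification $\calN_{n,{\rm red}} = \bigcup_\Lambda \mathcal{V}(\Lambda)$ of \cite{HLS2}, indexed by all vertex lattices. The projection $\calN_n^\spl \to \calN_n$ restricts to an isomorphism above each $\mathcal{V}(\Lambda)$ of type $\neq 2h$, producing $\mathcal{Z}(\Lambda)$ when the type exceeds $2h$ and $\mathcal{Y}(\Lambda^\sharp)$ when it is less than $2h$. Over a type-$2h$ stratum, the additional filtration data $(\mathrm{Fil}^0(X), \mathrm{Fil}^0(X^\vee))$ bifurcates the preimage into two natural closed subschemes $\mathcal{Z}(\Lambda_0)$ and $\mathcal{Y}(\Lambda_0^\sharp)$, depending on which filtration the splitting condition saturates. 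Taking the union yields (\ref{BTstrintro}).

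For (ii), the inclusion relations (1)(i)--(ii) and the intersection formulas (2)(i)--(ii) transfer directly from the analogous relations for $\mathcal{V}(\Lambda)$ in \cite{HLS2} via the isomorphism on the relevant open loci. The biconditional (1)(iii) is verified through an étale-local argument on the strata models: a common point of $\mathcal{Z}(\Lambda_1) \cap \mathcal{Y}(\Lambda_2^\sharp)$ forces the underlying Dieudonné lattice to simultaneously satisfy both vertex-lattice conditions, which by rank and inclusion counting is equivalent to $\Lambda_1 \subseteq \Lambda_2$.

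For (iii), the core new input is the explicit partial-moduli description of the strata model attached to a type-$2h$ lattice $\Lambda_0$. The reduced local picture decomposes into a $2h$-dimensional ``$\mathcal{Y}$-part'' and a complementary $(n-2h)$-dimensional ``$\mathcal{Z}$-part'', with the moduli conditions decoupling into choices of a line in one factor and a hyperplane in the other. This yields the product formulas $\PP_k^{t-h-1} \times \PP_k^{2h}$ and $\PP_k^{h-t-1} \times \PP_k^{n-2h-1}$ for the intersections of (2)(iii); assembling the strict-transform and exceptional pieces then identifies each of $\mathcal{Z}(\Lambda_0)$ and $\mathcal{Y}(\Lambda_0^\sharp)$ with the blow-up of $\PP_k^{n-1}$ along $\PP_k^{2h-1}$. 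The hard part will lie here: carefully tracking how the transition and splitting conditions on the filtration data interact with the blow-up at the worst point of the local model, so that both strata acquire precisely the claimed blow-up structure. Once this local picture is secured, the global statement follows by gluing via the étale-local uniformization between $\calN_n^\spl$ and $\M^{\spl,[2h]}_n$.
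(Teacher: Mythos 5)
Your overall strategy---bootstrapping from the stratification of $\calN_{n,\mathrm{red}}$ in \cite{HLS2} and controlling the new geometry \'etale-locally through the strata models and the blow-up at the worst point---is the same as the paper's. But two of your structural claims are wrong, and they matter. First, the projection $\calN_n^{\spl}\to\calN_n$ does \emph{not} restrict to an isomorphism over a stratum $\mathcal{Z}^{\rm loc}(\Lambda)$ of type $2t>2h$ (resp.\ $\mathcal{Y}^{\rm loc}(\Lambda^\sharp)$ of type $<2h$): by Theorem \ref{BTStratLoc}(3)--(4) every such stratum contains worst points (the type-$2h$ singletons $\mathcal{Z}^{\rm loc}(\Lambda_0)$ with $\Lambda\subseteq\Lambda_0$), and over each of these the fiber is the entire exceptional divisor. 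Consequently $\mathcal{Z}(\Lambda)$ is the blow-up of $\mathcal{Z}^{\rm loc}(\Lambda)$ at those points (Theorem \ref{Reducedness} via Proposition \ref{prop 43}), not a copy of it; this is precisely why the intersections in (2)(iii) come out as $\PP_k^{t-h-1}\times\PP_k^{2h}$ rather than points. Second, the preimage of a type-$2h$ stratum does not ``bifurcate'' into $\mathcal{Z}(\Lambda_0)$ and $\mathcal{Y}(\Lambda_0^\sharp)$ according to which filtration is saturated: since $\mathcal{Z}^{\rm loc}(\Lambda_0)=\mathcal{Y}^{\rm loc}(\Lambda_0^\sharp)$ is a single point and both $\mathcal{Z}(\Lambda_0)$ and $\mathcal{Y}(\Lambda_0^\sharp)$ are defined as the fiber product over it in (\ref{eq ZYstrata}), they \emph{coincide} and both equal the exceptional divisor $\Exc\cong\mathrm{Bl}_{\PP_k^{2h-1}}(\PP_k^{n-1})$ of Theorem \ref{thm 3.6}. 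Your picture of two complementary closed subschemes would, among other things, break the statement of (2)(iv).

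There is also a gap in the non-emptiness directions. For (1)(iii) ``$\Lambda_1\subseteq\Lambda_2\Rightarrow\mathcal{Z}(\Lambda_1)\cap\mathcal{Y}(\Lambda_2^\sharp)\neq\varnothing$'' and for the corresponding claims in (2)(i) and (2)(iii), your Dieudonn\'e-lattice counting only produces a point of the intersection \emph{downstairs} in $\calN_n$; since that intersection may consist entirely of worst points, you must still show the strict transforms meet \emph{upstairs}. The paper supplies this via the explicit computation that $\calM^{[2h]}_n(2t_1)\cap\calM^{[2h]}_n(2t_2)\cong\mathbb{A}_k^{t_1-t_2-1}$ on affine charts (\S\ref{intersection}) and the exceptional-locus computations of Propositions \ref{prop Zbl} and \ref{prop Ybl}; you invoke this machinery for stage (iii) but not where it is actually needed in (1)(iii) and (2)(i). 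Finally, for the scheme-theoretic equality $\mathcal{Z}(\Lambda)\cap\mathcal{Z}(\Lambda')=\mathcal{Z}(\Lambda'')$ the paper first proves the statement for the Kr\"amer strata using Proposition \ref{prop 51} and then base-changes along the closed immersion $\calN_n^{\spl}\hookrightarrow\calN_n^{\rm Kra}$; ``transferring via the isomorphism on open loci'' only gives the equality away from the worst points and would leave the exceptional fibers unaccounted for.
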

The above BT-strata are representable by projective schemes over $\bar{k}$. By definition, $\mathcal{Z}(\Lambda_1)$ and $\mathcal{Y}(\Lambda_2^\sharp)$ are closed subschemes of the special fiber of $\calN^\spl_n$, and in Theorem~\ref{Reducedness} we prove that these subschemes are reduced. To establish reducedness, in \S\ref{LPBT} we show that these BT-strata are \'etale-locally isomorphic to certain simpler schemes---the strata models $\calM^{[2h]}_n(2t)$---which are closed subschemes of the special fiber of $\M_n^{\spl,[2h]}$ and are introduced in \S\ref{sec. 32}. In particular, we construct a local model diagram  
 \begin{equation}
\begin{tikzcd}
&\tilde{\mathcal{Z}}(\Lambda_1)\arrow[dl, "\psi_1"']\arrow[dr, "\psi_2"]  & \\
\mathcal{Z}(\Lambda_1)  &&  \calM^{[2h]}_n(2t)
\end{tikzcd}
\end{equation}
where $\calM^{[2h]}_n(2t)$ is the strata model with $t> h$.  The morphisms $\psi_1$ and $\psi_2$ are smooth of the same dimension. Similarly, we have a local model diagram for $\mathcal{Y}(\Lambda_2^\sharp)$ and $\calM^{[2h]}_n(2t)$ where $t< h$. 

Therefore, to obtain certain nice local properties for the BT-strata, it is enough to study $\calM^{[2h]}_n(2t)$. Similar to the splitting models associated to Shimura varieties, we explicitly calculate an open affine covering $\cup~ U_{i_0,j_0}$ of $\calM^{[2h]}_n(2t)$. Studying these affine schemes, in \S \ref{affinechart strata}, we deduce that: 

\begin{Theorem}\label{thm splstrataintro}
The strata model $\calM^{[2h]}_n(2t)$ is smooth. Moreover, 

(1). For $t> h$, the strata model $\calM^{[2h]}_n(2t)$ is of dimension $t+h$.

(2). For $t< h$, excluding the case where $n$ is even and $h=\frac n2$ ($\pi$-modular case), the strata model $\calM^{[2h]}_n(2t)$ is of dimension $n-t-h-1$.

(3). For $t_2<h<t_1$, the intersection of strata models $\calM^{[2h]}_n(2t_1)\cap \calM^{[2h]}_n(2t_2)$ is of dimension $t_1-t_2-1$.   
\end{Theorem}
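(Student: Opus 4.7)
The plan is to exploit the explicit open affine cover $\bigcup U_{i_0,j_0}$ of the splitting model $\M_n^{\spl,[2h]}$ promised in \S\ref{affinechart strata}, combined with the moduli-theoretic description of $\calM_n^{[2h]}(2t)$ (characterization (ii) of the abstract) developed in \S\ref{sec. 32}. On each chart, the rank-one refinements of the Hodge filtration are presented by matrices whose entries are free affine coordinates, and the transition/splitting conditions together with the additional ``type-$2t$ vertex-lattice compatibility'' conditions cutting out $\calM_n^{[2h]}(2t)$ translate into an explicit polynomial system in these coordinates.

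First I would write down these matrix presentations and the resulting equations for $\calM_n^{[2h]}(2t)\cap U_{i_0,j_0}$ on a distinguished chart containing a generic point of the stratum, and verify that sufficiently many such charts cover $\calM_n^{[2h]}(2t)$. Smoothness then reduces to a Jacobian rank computation, and the dimension is read off by counting the free coordinates surviving modulo the relations. In case (1), where $t>h$, the count should yield $t+h$; in case (2), where $t<h$, the count should give $n-t-h-1$. The $\pi$-modular exclusion in part (2) reflects a degeneracy where one of the coordinate blocks in the linear-algebraic description collapses, forcing a separate local analysis there.

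For part (3), I would impose the defining equations of both $\calM_n^{[2h]}(2t_1)$ and $\calM_n^{[2h]}(2t_2)$ on a chart containing their intersection; combining the ``upper'' and ``lower'' lattice-compatibility constraints from the two vertex lattices should reduce the free parameters to $t_1-t_2-1$ coordinates, and smoothness of the intersection should again follow by inspecting the combined Jacobian. The main technical obstacle will be the bookkeeping: managing the simultaneous interaction among the blow-up direction at the worst point, the splitting and transition conditions for the two filtrations $\mathrm{Fil}^0(X)\subset\mathrm{Fil}(X)$ and $\mathrm{Fil}^0(X^\vee)\subset\mathrm{Fil}(X^\vee)$, and the lattice constraint, and ensuring that the system has the expected rank on each chart type rather than producing unexpected components. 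The $\pi$-modular degeneracy is the most delicate point and is the reason it must be separated from the general statement.
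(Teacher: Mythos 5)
Your proposal follows essentially the same route as the paper: \S\ref{affinechart strata} imposes the vertex-lattice compatibility and $\calG_{\pm h}\subset L_{\pm}^{\bot}$ conditions on the explicit charts $\U_{i_0,j_0}$ of Propositions \ref{affinechartU} and \ref{prop splflat}, forces the vanishing of the relevant coordinate blocks, and identifies each resulting chart with an affine space (or an obviously smooth hypersurface), from which smoothness and the dimension counts in (1)--(3) are read off directly. The only cosmetic difference is that the paper never needs a Jacobian computation, since the surviving equations trivialize outright, and the $\pi$-modular case is simply excluded (there the splitting model coincides with the local model and no worst point exists) rather than treated by a separate local analysis.
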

We note that $\calM^{[2h]}_n(2t)$ admits a second description via the blow-up construction. Recall that $\M^{\spl,[2h]}_n$ is the blow-up of $\M^{\mathrm{loc},[2h]}_n$ at the worst point $\ast$, and let
\[
  \tau\colon \M^{\spl,[2h]}_n \longrightarrow \M^{\mathrm{loc},[2h]}_n
\]
denote the blow-up morphism. In \cite{HLS2} the strata local models $\M^{\mathrm{loc},[2h]}_n(2t)$ are constructed as certain closed subschemes of $\M^{\mathrm{loc},[2h]}_n$ attached to the corresponding BT-strata of the RZ space $\calN_n$ (in \emph{loc.\,cit.} these BT-strata are shown to be irreducible). By Proposition~\ref{Sec. 4.3}, $\calM^{[2h]}_n(2t)$ is the strict transform of $\M^{\mathrm{loc},[2h]}_n(2t)$ under $\tau$. As a byproduct of this description we obtain the irreducibility of the BT-strata $\mathcal{Z}(\Lambda_1)$ and $\mathcal{Y}(\Lambda_2^\sharp)$ (Theorem~\ref{Reducedness}).

We now explain the organization of the paper. In \S \ref{RZspaces} we review the relevant unitary RZ spaces and define the corresponding BT strata over their special fiber. In \S \ref{StrataBlModels} we introduce the strata models $\mathcal{M}^{[2h]}_n(2t)$ and, using an explicit affine cover, we prove Theorem \ref{thm splstrataintro}. Section \ref{BLupsection} recasts the construction via blow-ups: the splitting model is identified with the blow-up of the local model at the worst point $*$ and the strata models are described as strict transforms. In \S \ref{LPBT} we transfer these properties to the RZ space through the local-model diagram, obtaining smoothness, irreducibility, and dimension formulas for $\calZ(\Lambda)$, $\calY(\Lambda^{\sharp})$ and their intersections. Finally, \S \ref{BTstrat.} constructs the BT stratification of $\calN^\spl_{n,\mathrm{red}}$ and proves our main result, Theorem \ref{Thm BT intro}. The Appendix compares splitting-model variants and proves the identification $\mathscr{M}^{\bl,[2h]}_n \simeq M^{\mathrm{spl},[2h]}_n$.

\smallskip

{\bf Acknowledgements:} We thank Y. Luo for useful
suggestions. I.Z. was supported by Germany's Excellence Strategy EXC~2044--390685587 ``Mathematics M\"uns\-ter: Dynamics--Geometry--Structure'' and by the CRC~1442 ``Geometry: Deformations and Rigidity'' of the DFG.

\section{Rapoport-Zink spaces}\label{RZspaces}

In this section, we present the definition and basic properties of certain ramified unitary Rapoport–Zink (RZ) spaces, with level structure given by the stabilizer of a vertex lattice. As these spaces have already appeared in the literature, our discussion will be brief and accompanied by the relevant references.

\subsection{Preliminaries: Strict $O_{F_0}$-modules and hermitian $O_{F}$-modules}\label{sec2.1}
Let $F_0$ be a finite extension of $\mathbb{Q}_p$, where $p$ is an odd prime, with residue field $k = \mathbb{F}_q$. Let $\bar{k}$ be a fixed algebraic closure of $k$ and $F$ a ramified quadratic extension of $F_0$. Denote by $a \mapsto \bar{a}$ the (nontrivial) Galois involution of $F/F_0$ and let $\pi$ be a uniformizer of $F$ such that $\bar{\pi} = - \pi$. Let $ \pi_0 = \pi^2$, a uniformizer of $F_0$. Denote by $\br F$ the completion of the maximal unramified extension of $F$ and let $O_F$, $O_{\br F}$ be the ring of integers of $F$, $\br F$ respectively. Denote by $\mathrm{Nilp} \, O_{\br F}$ the category of $O_{\br F}$-schemes $S$ such that $\pi$ is locally nilpotent on $S$ and for such an $S$ denote its special fiber $S \times_{\mathrm{Spf}\,  O_{\br F}} \Spec \bar{k}$  by $\bar{S}$. Let $\sigma \in \mathrm{Gal}(\br{F}_0/F_0)$ be the Frobenius element. We fix an injection of rings $i_0 : O_{F_0} \rightarrow O_{\br{F}_0}$ and an injection $i : O_F \rightarrow O_{\br{F}}$ extending $i_0$. Denote by $ \bar{i} : O_F \rightarrow O_{\br{F}}$ the map $ a \mapsto i(\bar{a})$.

A 
strict $O_{F_0}$-module over $S$, where $S$ is an $O_{F_0}$-scheme, is a pair $(X,\iota)$ where $X$ is a $p$-divisible group over $S$ and $\iota : O_{F_0} \longrightarrow \operatorname{End}(X)$ is an action such that $O_{F_0}$ acts on $\operatorname{Lie}(X)$ via the structure morphism $O_{F_0} \to \mathcal{O}_S$. Such an $O_{F_0}$-module is called \emph{formal} if the underlying $p$-divisible group $X$ is formal. By Zink--Lau’s theory, which is generalized in \cite{ACZ}, there is an equivalence of categories between the strict formal $O_{F_0}$-modules over $S$ and nilpotent $O_{F_0}$-displays over $S$ (see also \cite[\S 3.1]{HLS2} and \cite[\S 5]{LMZ} for more details).
 To any strict formal $O_{F_0}$-module, there is an associated crystal $\mathbb{D}_X$ on the category of $O_{F_0}$-pd-thickenings. We define the (covariant relative) de~Rham realization as $D(X) := \mathbb{D}_X(S)$ and by the (relative) Grothendieck--Messing theory we obtain a short exact sequence of $\mathcal{O}_S$-modules:
\[
0 \longrightarrow \operatorname{Fil}(X) \longrightarrow D(X) \longrightarrow \operatorname{Lie}(X) \longrightarrow 0,
\]
where $\operatorname{Fil}(X) \subset D(X)$ is the Hodge filtration. (See \cite[\S 3.1]{HLS2} for a more comprehensive treatment.)

Next, we restrict to the case where $X = (X, \iota)$ is biformal; see \cite[Definition~11.9]{Mih} for the definition. For a biformal strict $O_{F_0}$-module $X$, we can define the (relative) dual $X^\vee$ of $X$, and hence the (relative) polarization and the (relative) height. From the definition, it follows that there is a perfect pairing 
\begin{equation}\label{perf.pair.}
D(X) \times D(X^\vee) \to \mathcal{O}_S
\end{equation}
such that $\operatorname{Fil}(X) \subset D(X)$ and $\operatorname{Fil}(X^\vee) \subset D(X^\vee)$ are orthogonal complements of each other and there are two induced perfect pairings 
\begin{equation}\label{perfectpair}
\operatorname{Fil} (X) \times \operatorname{Lie}(X^\vee)\to \mathcal{O}_S \text{  and  } \operatorname{Fil} (X^\vee) \times \operatorname{Lie}(X)\to \mathcal{O}_S.    
\end{equation}

When $S = \operatorname{Spec} R$ is perfect, the nilpotent $O_{F_0}$-display is equivalent to the relative Dieudonn\'e module $M(X)$ over $W_{O_{F_0}}(R)$, equipped with a $\sigma$-linear operator $F$ and a $\sigma^{-1}$-linear operator $V$, such that $FV = VF = \pi \cdot \mathrm{id}$. (Here, $W_{O_{F_0}}(R)$ is the ring of ramified Witt vectors and we refer the reader to \cite[\S 3.1]{HLS2} for more details.)
 
\begin{Definition}\label{def 21}
\rm{
 Let $h, n$ be integers with $0 \leq h \leq \lfloor \frac n2 \rfloor$. For any $S \in \mathrm{Nilp}\, O_{\br F}$, a \textit{hermitian} $O_F$-\textit{module} of rank $n$ and type $2h$ (with signature $(n-1, 1)$) over $S$ is a triple $(X, \iota, \lambda)$ satisfying:
\begin{enumerate}
  \item $X$ is a strict biformal $O_{F_0}$-module over $S$ of height $2n$ and dimension $n$.
  \item $\iota : O_F \to \operatorname{End}(X)$ is an action of $O_F$ on $X$ extending the $O_{F_0}$-action.
  \item $\lambda$ is a (relative) polarization of $X$ that is $O_F/O_{F_0}$-semilinear in the sense that the Rosati involution $\operatorname{Ros}_\lambda$ induces the non-trivial involution $\sigma \in \operatorname{Gal}(F/F_0)$ on $\iota : \mathcal{O}_F \to \operatorname{End}(X)$.
  \item We require that $\ker[\lambda] \subset X[\iota(\pi)]$ and has order $q^{2h}$.
\end{enumerate}
}
\end{Definition}
From (4) above, we deduce that there exists a unique isogeny $\lambda^\vee : X^\vee \to X$ such that $\lambda \circ \lambda^\vee = \iota(\pi)$ and $\lambda^\vee \circ \lambda = \iota(\pi)$.

\subsection{Unitary RZ spaces}\label{sec2.2} We fix a supersingular hermitian $O_F$-module $(\mathbb{X},\iota_{\mathbb{X}},\lambda_{\mathbb{X}})$ over $\Spec \bar{k}$ of rank $n$ and type $2h$ (with signature $(n-1, 1)$) which we call the framing object; supersingular means that the rational  Dieudonn\'e module $N = M(\mathbb{X})[1/\pi_0]$ has all relative slopes $\frac{1}{2}$. (We refer to \cite[\S 5]{LRZ} for the existence of these framing objects.) Now, we are ready to define the following RZ spaces which are \textit{relative} in the sense of \cite{Mih}. 

\begin{Definition}
{\rm 
\begin{enumerate}
    \item The wedge RZ space $\mathcal{N}_n^{\wedge}$ is the set-valued functor on $\mathrm{Nilp} \, O_{\breve{F}}$ which associates to $S \in \mathrm{Nilp} \, O_{\breve{F}}$ the set of isomorphism classes of quadruples $(X, \iota, \lambda, \rho)$ which satisfy
\begin{enumerate}
    \item  $(X, \iota, \lambda)$ is a hermitian  $O_F$-module over S of dimension $n$ and type $2h$.
    \item $\rho : X \times_S \bar{S} \to \mathbb{X} \times_{\bar{k}} \bar{S}$ is an $O_F$-linear quasi-isogeny of height $0$ over the special fiber $\bar{S} = S \times_{\mathrm{Spf}\,  O_{\br F}} \Spec \bar{k}$ such that $\rho^*(\lambda_{\mathbb{X}, \bar{S}}) = \lambda_{\bar{S}}$.
    \item The action of $O_F$ on $\operatorname{Fil}(X)$ induced by $\iota : O_F \to \operatorname{End}(X)$ satisfies:
 \begin{itemize}
  \item \textit{(Kottwitz condition)}: 
  $ \operatorname{char}(\iota(\pi) \mid \operatorname{Fil}(X)) = (T - \pi)(T + \pi)^{n-1}.$   \item \textit{(Wedge condition)}: 
 $\wedge^2(\iota(\pi) - \pi \mid \operatorname{Fil}(X)) = 0, \,  \wedge^n(\iota(\pi) + \pi \mid \operatorname{Fil}(X)) = 0.$ 
 \end{itemize}
 \item \textit{(Spin condition)} When $n$ is even and $2h = n$, we ask that $\iota(\pi) - \pi$ is non-vanishing on $\mathrm{Fil}(X)$.
\end{enumerate}

\item The RZ space $\mathcal{N}_n$ is defined as the closed formal subscheme of $\mathcal{N}_n^{\wedge}$ cut out by the ideal sheaf $\mathcal{O}_{\mathcal{N}_n^{\wedge}}[\pi_0^\infty] \subset \mathcal{O}_{\mathcal{N}_n^{\wedge}}$. This is the maximal flat closed formal subscheme of $\mathcal{N}_n^{\wedge}$.
\end{enumerate}
}\end{Definition}

The RZ spaces $\mathcal{N}_n^{\wedge}$ and $\mathcal{N}_n$ are representable by formal schemes locally of finite type over $\mathrm{Spf} \,O_{\br F}$ and both spaces have relative dimension $n - 1$ (see \cite[\S 3.3]{HLS2}). The closed formal subscheme $\mathcal{N}_n$ is flat and has the same underlying topological space with $\mathcal{N}_n^{\wedge}$, i.e. these spaces share identical reduced loci. These assertions can be easily seen by using the local model diagram and passing to the corresponding {\it wedge local model} ${\rm M}_n^{\wedge}$ and the {\it local model} ${\rm M}_n^{\rm loc}$ (see \cite[Proposition 3.4]{HLS2}). From \cite[\S 3.3]{HLS2}, we also see that $\mathcal{N}_n$ is a linear modification of $\mathcal{N}_n^{\wedge}$ in the sense of \cite[\S 2]{P}.
\begin{Definition}\label{nRegDef}
\rm{
 The {\it Kr\"amer RZ space} $\mathcal{N}_n^{\rm Kra}$ (of type $2h$, height $n$) is the set-valued functor on $\mathrm{Nilp} \, O_{\breve{F}}$, sending $S \in \mathrm{Nilp} \, O_{\breve{F}}$ to the set of isomorphism classes of tuples $(X, \iota, \lambda, \rho,\operatorname{Fil}^0(X), \operatorname{Fil}^0(X^\vee))$ which satisfy
\begin{enumerate}
 \item  $(X, \iota, \lambda)$ is a hermitian  $O_F$-module over $S$ of dimension $n$ and type $2h$.
    \item $\rho : X \times_S \bar{S} \to \mathbb{X} \times_{\bar{k}} \bar{S}$ is an $O_F$-linear quasi-isogeny of height $0$ over the special fiber $\bar{S}$ such that $\rho^*(\lambda_{\mathbb{X}, \bar{S}}) = \lambda_{\bar{S}}$.
    \item $\operatorname{Fil}^0(X)$ (resp. $\operatorname{Fil}^0(X^\vee)$) is locally a $\mathcal{O}_S$-direct summand of the Hodge filtration $\operatorname{Fil}(X) \subset D(X)$ (resp. $\operatorname{Fil}(X^\vee) \subset D(X^\vee)$) of rank one that satisfies the \textit{transition conditions}
    \[
\lambda (\operatorname{Fil}^0(X)) \subset \operatorname{Fil}^0(X^\vee), \quad (\text{resp.} \quad \lambda^\vee (\operatorname{Fil}^0(X^\vee))\subset \operatorname{Fil}^0(X))
     \]
and \textit{splitting conditions}
    \[  (\iota(\pi)+\pi) (\operatorname{Fil}(X)) \subset \operatorname{Fil}^0(X), \quad  (\iota(\pi)-\pi) (\operatorname{Fil}^0(X))=0.
    \]  
    \[ (\text{resp}.\quad  (\iota(\pi)+\pi) (\operatorname{Fil}(X^\vee)) \subset \operatorname{Fil}^0(X^\vee), \quad  (\iota(\pi)-\pi) (\operatorname{Fil}^0(X^\vee))=0.)
    \]
\end{enumerate}
}
\end{Definition}
The Kr\"amer RZ space $\mathcal{N}_n^{\rm Kra}$ is represented by a formal scheme, locally of finite type over $\mathrm{Spf}\, O_{\br F}$, of relative dimension $n-1$; representability follows from the general results of \cite{RZbook}. From \cite[\S5.2]{HLS1}, we obtain the local model diagram 
 \begin{equation}\label{LMdiagramKra}
\begin{tikzcd}
&\widetilde{\mathcal{N}}_n^{\rm Kra}\arrow[dl, "\pi"']\arrow[dr, "\tilde{\varphi}"]  & \\
\mathcal{N}_n^{\rm Kra} &&  {\rm M}_n^{\rm nspl}
\end{tikzcd}
\end{equation}
where ${\rm M}_n^{\rm nspl}$ is the {\it naive splitting model} and the morphisms $\tilde{\varphi}$ and $\pi$ are smooth of equal
relative dimension. The naive splitting model admits an explicit moduli description and is equipped with a projective forgetful morphism $\tau:\ {\rm M}_n^{\rm nspl}\longrightarrow {\rm M}_n^{\rm loc}$ whose restriction to the generic fiber is an isomorphism (\cite[\S2.3]{HLS1}). We refer to this model as naive since ${\rm M}_n^{\rm nspl}$, and so $\mathcal{N}_n^{\rm Kra} $, is not in general flat. To remedy this non-flatness, one can either try to modify the moduli problem and obtain a flat model or define the splitting model as the scheme theoretic closure of the generic fiber of ${\rm M}_n^{\rm nspl}$ in ${\rm M}_n^{\rm nspl}$. 

The first route was taken in \cite{ZacZhao2} and the corresponding RZ space is defined analogously to $\mathcal{N}_n^{\rm Kra}$ in Definition \ref{nRegDef}, except in (3), where we impose only the additional filtration $\operatorname{Fil}^0(X)$ inside $\operatorname{Fil}(X)$ (omitting $\operatorname{Fil}^0(X^\vee)$) together with the corresponding splitting conditions. This RZ space is flat and normal and its Bruhat-Tits stratification was studied in \cite{ZacZhao3}. 

The latter route was taken in \cite{HLS1}, where the authors defined ${\rm M}_n^{\rm spl}$ to be the {\it splitting model} that is defined as the scheme-theoretic closure of the generic fiber ${\rm M}_n^{\rm nspl}\otimes_{O_F}F$ in ${\rm M}_n^{\rm nspl}$.

\begin{Definition}\label{RegDef}
{\rm
We define the flat closure $\mathcal{N}^\spl_n \subset \mathcal{N}_n^{\rm Kra} $ to be the closed formal subscheme defined by the ideal sheaf $\mathcal{O}_{\mathcal{N}_n^{\rm Kra}}[\pi_0^\infty] \subset \mathcal{O}_{\mathcal{N}_n^{\rm Kra}}$. 
}\end{Definition}

As in the local-model case, the RZ space $\mathcal{N}^\spl_n$ is a linear modification of $\mathcal{N}_n^{\rm Kra}$, and there is a local-model diagram connecting $\mathcal{N}^\spl_n$ to the splitting model ${\rm M}_n^{\rm spl}\subset {\rm M}_n^{\rm nspl}$. By \cite[Theorem 1.3.1]{HLS1}, ${\rm M}_n^{\rm spl}$ is flat with semi-stable reduction; hence the same holds for $\mathcal{N}^\spl_n$. In Section \ref{StrataBlModels}, we will give the explicit definitions of the (naive) splitting models and discuss the relations among all the above different variants.


\begin{Remark}\label{DualSpace}
{\rm
Denote by $\mathcal{F}_X \subset  \operatorname{Lie}(X)$ (resp. $\mathcal{F}_{X^\vee} \subset  \operatorname{Lie}(X^\vee)$) the perpendicular complement of $ \operatorname{Fil}^0(X)$ (resp. $ \operatorname{Fil}^0(X^\vee)$) under the perfect pairing (\ref{perfectpair}) which determine each other. Then, as in \cite[\S 3.2]{HLSY}, the splitting condition of Definition \ref{nRegDef} is equivalent to: $\mathcal{F}_X$ (resp $\mathcal{F}_{X^\vee}$) is a local direct summand of $\operatorname{Lie} (X)$ (resp. $\operatorname{Lie} (X^\vee)$) of rank $n - 1$ as an $\mathcal{O}_S$-module such that $\mathcal{O}_F$ acts on $\mathcal{F}_X$ (resp. $\mathcal{F}_{X^\vee}$) via $O_F \xrightarrow{i} O_{\br{F}} \to \mathcal{O}_S$ and acts on $\operatorname{Lie} (X) / \mathcal{F}_X$ (resp. $\operatorname{Lie} (X^\vee) / \mathcal{F}_{X^\vee}$) via $O_F \xrightarrow{\bar{i}} O_{\br{F}} \to \mathcal{O}_S$. 
}\end{Remark}

Recall that we denote by $N = M(\mathbb{X})[1/\pi_0]$ the rational Dieudonn\'e module of the framing object which is a $2n$-dimensional $\breve{F}_0$-vector space equipped with a $\sigma$-linear operator $F$ and a $\sigma^{-1}$-linear operator $V$. 
The $O_F$-action $\iota_{\mathbb{X}} : O_F \to \operatorname{End}(\mathbb{X})$ induces on $N$ an $O_F$-action commuting with $F$ and $V$. We still denote this induced action by $\iota_{\mathbb{X}}$ and denote $\iota_{\mathbb{X}}(\pi)$ by $\Pi$.

The polarization of $\mathbb{X}$ induces a skew-symmetric $\breve{F}_0$-bilinear form $\langle \cdot, \cdot \rangle$ on $N$ satisfying
\[
\langle Fx, y \rangle = \langle x, Vy \rangle^\sigma, \qquad \langle \iota(a)x, y \rangle = \langle x, \iota(\bar{a})y \rangle,
\]
for any $x, y \in N$, $a \in O_F$. Also, $N$ is an $n$-dimensional $\breve{F}$-vector space equipped with the $F/\breve{F}_0$-hermitian form $h(\cdot, \cdot)$ defined by
\[
h(x, y) := \delta \left( \langle \Pi x, y \rangle + \pi \langle x, y \rangle \right),
\]
where $\delta$ is a fixed element in $\breve{F}_0^\times$ satisfying $\sigma(\delta) = -\delta$. The bilinear form $\langle \cdot, \cdot \rangle$ can be recovered from $h(\cdot, \cdot)$ via the relation:
\[
\langle x, y \rangle = \frac{1}{2\delta} \operatorname{Tr}_{F/\breve{F}_0} \left( \pi^{-1} h(x, y) \right).
\]
For a lattice $\Lambda \subset N$ we denote by $ \Lambda^\sharp = \{x \in N \,|\, h(x,\Lambda) \in O_F \}$ its hermitian dual. Let $\tau := \Pi V^{-1}$ and define $C := N^{\tau = 1}$ (the set of $\tau$-fixed points in $N$). Then $C$ is an $F$-vector space of dimension $n$ and we have
\[
N = C \otimes_{F_0} \breve{F}_0.
\]
The $F/F_0$-hermitian form $h(\cdot, \cdot)$ restricts to $C$ and we continue to use the same notation for the restricted form. From now on, we write $\pi$ instead of $\Pi$ for the action on $C$. We call $\Lambda\subset C$ a vertex lattice if it satisfies 
\begin{equation}
\pi \Lambda^{\sharp} \subset \Lambda \subset  \Lambda^{\sharp}.   
\end{equation}
We denote by $t(\Lambda):=\dim(\Lambda^\sharp/\Lambda)$ the type of a vertex lattice, which is an even integer (see \cite[Lemma 3.2]{RTW}). Also, set $\breve{\Lambda} = \Lambda \otimes_{O_F} O_{\breve{F}}$. 

\begin{Proposition}\label{DLoc}
Let $ \kappa$ be a perfect field over $ \bar{k}$. There is a bijection between $\mathcal{N}_n(\kappa)$ and the set of $W_{O_{F_0}}(\kappa)$-lattices
{\small
\[
\left\{ 
M \subset N \otimes W_{O_{F_0}}(\kappa) \,\middle|\, 
\Pi M^\sharp \subset M \mathrel{\overset{\scriptstyle 2h}{\subset}} M^\sharp,\ 
\Pi M \subset \tau^{-1}(M) \subset \Pi^{-1} M,\ 
M \mathrel{\overset{\scriptstyle \leq 1}{\subset}} M + \tau(M) 
\right\}.
\]
}
\end{Proposition}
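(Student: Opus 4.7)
The plan is to use relative Dieudonné theory (Zink--Lau for strict formal $O_{F_0}$-modules, recalled in §\ref{sec2.1}) to translate the moduli problem defining $\mathcal{N}_n$ into linear algebra on lattices in the rational Dieudonné module $N$. Over the perfect field $\kappa$, the $O_{F_0}$-display of a strict formal $O_{F_0}$-module $X$ is equivalent to its relative Dieudonné module $M$ over $W_{O_{F_0}}(\kappa)$, equipped with the $\sigma$-linear $F$ and $\sigma^{-1}$-linear $V$ satisfying $FV=VF=\pi_0\cdot\mathrm{id}$. The $O_F$-action $\iota$ supplies the $\Pi$-operator on $M$, and the height-zero quasi-isogeny $\rho$ identifies $M[1/\pi_0]$ with $N\otimes_{\breve{F}_0} W_{O_{F_0}}(\kappa)[1/\pi_0]$; thus $M$ becomes a $W_{O_{F_0}}(\kappa)$-lattice in $N\otimes W_{O_{F_0}}(\kappa)$, and it remains to check that the moduli conditions on $(X,\iota,\lambda,\rho)$ correspond exactly to the three lattice conditions in the statement.

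First I would handle the polarization. The compatibility $\rho^{*}(\lambda_{\mathbb{X},\bar S})=\lambda_{\bar S}$ identifies the dual Dieudonné module of $X$ with the hermitian dual $M^{\sharp}$ inside $N\otimes W_{O_{F_0}}(\kappa)[1/\pi_0]$, for the hermitian form $h(\cdot,\cdot)$ of §\ref{sec2.2}. The requirement that $\ker\lambda$ be contained in $X[\iota(\pi)]$ with order $q^{2h}$ then reads $\Pi M^{\sharp}\subset M\stackrel{2h}{\subset} M^{\sharp}$: the left containment encodes that $\iota(\pi)$ annihilates $\ker\lambda$, while the colength $2h$ is forced by the order.

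Next, via covariant relative Dieudonné theory one has $\operatorname{Fil}(X)=V(M)/\pi_0 M\subset M/\pi_0 M = D(X)$, and the relation $\tau=\Pi V^{-1}$ rewrites $V(M)=\Pi\,\tau^{-1}(M)$ inside $N\otimes W_{O_{F_0}}(\kappa)[1/\pi_0]$. In these terms the Kottwitz and wedge conditions—which control the action of $\iota(\pi)\pm\pi$ on $\operatorname{Fil}(X)$—translate into the sandwich $\Pi M\subset \tau^{-1}(M)\subset \Pi^{-1}M$: the left inclusion encodes $\wedge^{2}(\iota(\pi)-\pi\mid\operatorname{Fil}(X))=0$, the right one encodes $\wedge^{n}(\iota(\pi)+\pi\mid\operatorname{Fil}(X))=0$, and the signature $(n-1,1)$ part of the Kottwitz condition becomes the unit-colength inequality $M\stackrel{\leq 1}{\subset} M+\tau(M)$. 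Passing from $\mathcal{N}_n^{\wedge}$ to its flat closure $\mathcal{N}_n$ imposes no further constraint on $\kappa$-points, since the two share their reduced loci.

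The main obstacle is exactly this last translation of the wedge/Kottwitz conditions into the $\tau$-dictionary: it requires careful book-keeping with the relations $\tau=\Pi V^{-1}$, $VF=\pi_0$, and $\Pi^{2}=\pi_0$, together with the observation that on the rational Dieudonné module the commuting operators $\iota(\pi)\pm\pi$ have ranks on $V(M)/\pi_0 M$ controlled by the lattice position of $\tau^{\pm 1}(M)$ relative to $\Pi^{\pm 1}M$. For the reverse direction, from a lattice $M$ satisfying all three conditions one reassembles the $O_{F_0}$-display (hence, by Zink--Lau, the hermitian $O_F$-module $X$ with its $\iota,\lambda$) and recovers $\rho$ from the fixed embedding $M\hookrightarrow N\otimes W_{O_{F_0}}(\kappa)[1/\pi_0]$; the two constructions are manifestly mutually inverse. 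For the routine verifications I would defer to the strictly analogous arguments in \cite[§3]{HLS2} and \cite{RTW}.
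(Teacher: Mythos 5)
The paper offers no argument of its own here: its proof is the single line ``See \cite[Proposition 3.5]{HLS2}'', so your proposal is being measured against the standard Dieudonn\'e-theoretic translation that that reference (and the paper's own proof of Proposition \ref{DSpl}, which repeats the key steps) carries out. Your overall strategy is exactly that one, and most of your dictionary is right: the polarization condition $\ker\lambda\subset X[\iota(\pi)]$ of order $q^{2h}$ gives $\Pi M^\sharp\subset M\overset{2h}{\subset}M^\sharp$, and the signature condition is what produces $M\overset{\leq 1}{\subset}M+\tau(M)$.

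However, there is a concrete error in your translation of the middle condition. You assert that the two inclusions in $\Pi M\subset\tau^{-1}(M)\subset\Pi^{-1}M$ encode the two wedge conditions $\wedge^{2}(\iota(\pi)-\pi\mid\operatorname{Fil}(X))=0$ and $\wedge^{n}(\iota(\pi)+\pi\mid\operatorname{Fil}(X))=0$. They do not. Since $\tau=\Pi V^{-1}$ and $\Pi$ commutes with $V$, one has $\tau^{-1}(M)=\Pi^{-1}VM$, so the sandwich is equivalent to $\pi_0M\subset VM\subset M$ --- this is precisely what the paper states in the proof of Proposition \ref{DSpl}. In other words, this condition only records that $M$ is an $F,V$-stable lattice, i.e.\ that the quasi-isogeny to $\mathbb{X}$ comes from an actual strict $O_{F_0}$-module with Hodge filtration $VM/\pi_0M\subset M/\pi_0M$; it carries no wedge information. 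The wedge condition $\wedge^{2}(\Pi-\pi\mid\operatorname{Fil}(X))=0$ (which over the special fiber says $\Pi$ has rank $\leq 1$ on $VM/\pi_0M$) is exactly what the last condition $M\overset{\leq 1}{\subset}M+\tau(M)$ encodes, via $\dim\bigl((M+\tau(M))/M\bigr)=\dim\bigl(\Pi M/(\Pi M\cap VM)\bigr)$; the condition $\wedge^{n}(\Pi+\pi\mid\operatorname{Fil}(X))=0$ is then automatic for $n\geq 2$. If you carried out your plan as written --- trying to derive the wedge conditions from the sandwich --- the argument would not close, and you would be left with no lattice-theoretic counterpart for $F$-stability. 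You also do not address the spin condition in the case $n=2m$, $2h=n$, which is part of the definition of $\mathcal{N}_n^{\wedge}$ and must be accounted for in the bijection.
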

\begin{proof}
See \cite[Proposition 3.5]{HLS2}. 
\end{proof}

\begin{Proposition}\label{DSpl}
Let $ \kappa$ be a perfect field over $ \bar{k}$. There is a bijection between $\mathcal{N}_n^{\rm Kra}(\kappa)$ and the set of triples of $W_{O_{F_0}}(\kappa)$-lattices $(M,M', M'')$ in $N \otimes W_{O_{F_0}}(\kappa)$ satisfying
\[
\begin{array}{l}
\Pi M^\sharp \subset M \mathrel{\overset{\scriptscriptstyle 2h}{\subset}} M^\sharp,\quad 
\Pi M \subset \tau^{-1}(M) \subset \Pi^{-1} M,\quad   \Pi M'\subset M''\subset M'\\
V M^\sharp \subset M' \subset \tau^{-1}(M^\sharp) \cap M^\sharp,\quad
\operatorname{length}(M^\sharp / M') = 1.\\   
V M \subset M'' \subset \tau^{-1}(M) \cap M,\quad
\operatorname{length}(M / M'') = 1.
\end{array}
\]
\end{Proposition}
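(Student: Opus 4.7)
The plan is to extend the lattice description of Proposition~\ref{DLoc} by translating the rank-one filtrations $\operatorname{Fil}^0(X)$ and $\operatorname{Fil}^0(X^\vee)$ into additional lattice data via Dieudonn\'e theory. Given a $\kappa$-point $(X,\iota,\lambda,\rho,\operatorname{Fil}^0(X),\operatorname{Fil}^0(X^\vee))$ of $\calN_n^{\rm Kra}$, I let $M=M(X)\subset N\otimes W_{O_{F_0}}(\kappa)$ be the relative Dieudonn\'e lattice; the hermitian pairing on $N$ identifies $M^\sharp$ with $M(X^\vee)$, the de~Rham realization is $D(X)=M/\pi_0M$, the Hodge filtration is $\operatorname{Fil}(X)=VM/\pi_0M$, and the Lie algebra is $\operatorname{Lie}(X)=M/VM$. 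Proposition~\ref{DLoc} already yields the first two lattice conditions $\Pi M^\sharp\subset M\mathrel{\overset{2h}{\subset}} M^\sharp$ and $\Pi M\subset \tau^{-1}(M)\subset \Pi^{-1}M$.

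By Remark~\ref{DualSpace}, specifying $\operatorname{Fil}^0(X)\subset\operatorname{Fil}(X)$ of rank one is equivalent to specifying its perpendicular complement $\mathcal{F}_X\subset\operatorname{Lie}(X)$ of rank $n-1$. I define $M''\subset M$ as the pre-image of $\mathcal{F}_X$ under the quotient $M\twoheadrightarrow M/VM=\operatorname{Lie}(X)$, which gives $VM\subset M''\subset M$ with $\operatorname{length}(M/M'')=1$; analogously I define $M'\subset M^\sharp$ as the pre-image of $\mathcal{F}_{X^\vee}$ under $M^\sharp\twoheadrightarrow\operatorname{Lie}(X^\vee)$. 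The assignment $(X,\iota,\lambda,\rho,\operatorname{Fil}^0(X),\operatorname{Fil}^0(X^\vee))\mapsto(M,M',M'')$ is a bijection at the level of ``Dieudonn\'e module plus two rank-one filtrations'', so the remaining task is to match the transition and splitting conditions with the refinements $\Pi M'\subset M''\subset M'$, $M''\subset\tau^{-1}(M)\cap M$, and $M'\subset\tau^{-1}(M^\sharp)\cap M^\sharp$.

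For the splitting condition, Remark~\ref{DualSpace} reformulates $(\iota(\pi)-\pi)(\operatorname{Fil}^0(X))=0$ together with $(\iota(\pi)+\pi)(\operatorname{Fil}(X))\subset\operatorname{Fil}^0(X)$ as the statement that $O_F$ acts on $\mathcal{F}_X$ via $i$ and on $\operatorname{Lie}(X)/\mathcal{F}_X$ via $\bar i$, i.e.\ $\Pi=\iota(\pi)$ acts as $+\pi$ on the rank $(n-1)$ quotient $M''/VM$ and as $-\pi$ on the rank-one quotient $M/M''$. Using the identity $\Pi=\tau V$ and the ambient inclusions $\Pi M\subset\tau^{-1}(M)\subset\Pi^{-1}M$ from Proposition~\ref{DLoc}, this eigenspace description unpacks into the single lattice refinement $M''\subset\tau^{-1}(M)\cap M$ (the inclusion $VM\subset\tau^{-1}(M)$ being automatic from $V\Pi^{-1}M\supset VM$). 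The analogous argument for $X^\vee$ gives $M'\subset\tau^{-1}(M^\sharp)\cap M^\sharp$. For the transition conditions, under the Dieudonn\'e identifications $\lambda$ corresponds to the inclusion $M\hookrightarrow M^\sharp$ and $\lambda^\vee$ to the map $\Pi\colon M^\sharp\to M$; then $\lambda(\operatorname{Fil}^0(X))\subset\operatorname{Fil}^0(X^\vee)$ translates to $M''\subset M'$ and $\lambda^\vee(\operatorname{Fil}^0(X^\vee))\subset\operatorname{Fil}^0(X)$ translates to $\Pi M'\subset M''$.

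The converse direction is formal: from a triple $(M,M',M'')$ satisfying all the listed conditions, Proposition~\ref{DLoc} together with the Dieudonn\'e equivalence recovers $(X,\iota,\lambda,\rho)$ from $M$, while $M''/VM\subset\operatorname{Lie}(X)$ and $M'/VM^\sharp\subset\operatorname{Lie}(X^\vee)$ recover $\mathcal{F}_X$ and $\mathcal{F}_{X^\vee}$, hence $\operatorname{Fil}^0(X)$ and $\operatorname{Fil}^0(X^\vee)$ via Remark~\ref{DualSpace}. The main obstacle will be the precise bookkeeping in the eigenspace step: one must verify that the $\pm\pi$-decomposition of $\Pi$ on the quotients $M''/VM$ and $M/M''$ corresponds \emph{exactly} to the lattice refinement $\tau^{-1}(M)\cap M$---no larger and no smaller---using the defining relation $\tau=\Pi V^{-1}$, the $\sigma^{-1}$-linearity of $V$, and the Kottwitz condition on $\operatorname{Fil}(X)$.
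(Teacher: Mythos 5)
Your proposal is correct and follows essentially the same route as the paper: define $M=M(X)$, take $M''$ and $M'$ to be the preimages of $\mathcal{F}_X$ and $\mathcal{F}_{X^\vee}$ under the projections $M\to\operatorname{Lie}(X)$ and $M^\sharp\to\operatorname{Lie}(X^\vee)$, invoke Proposition~\ref{DLoc} for the conditions on $M$ alone, match the transition conditions with $\Pi M'\subset M''\subset M'$, and use Remark~\ref{DualSpace} together with $\tau=\Pi V^{-1}$ to identify the splitting conditions with the refinements $M''\subset\tau^{-1}(M)\cap M$ and $M'\subset\tau^{-1}(M^\sharp)\cap M^\sharp$. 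The ``bookkeeping'' step you flag is exactly where the paper records the chain of equivalences $M'\subset\tau^{-1}(M^\sharp)\Leftrightarrow\Pi M'\subset VM^\sharp\Leftrightarrow\Pi\cdot\mathcal{F}_{X^\vee}=0$ (with $\Pi\cdot\operatorname{Lie}(X^\vee)\subset\mathcal{F}_{X^\vee}$ then forced by $\Pi^2=\pi_0\equiv 0$ on $\operatorname{Lie}$), so no new idea is needed beyond what you describe.
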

\begin{proof}
Let $(X, \iota, \lambda, \rho,\operatorname{Fil}^0(X), \operatorname{Fil}^0(X^\vee)) \in \mathcal{N}_n^{\rm Kra}(\kappa) $ and let $M(X)$ be the $O_{F_0}$-relative Dieudonné module of $X$. Define $M = \rho(M(X)) \subset N  \otimes W_{O_{F_0}}(\kappa)$ and 
\[
M' = \rho(\operatorname{Pr}_1^{-1}(\mathcal{F}_{X^\vee})) \quad (\text{resp.}\quad
M'' = \rho(\operatorname{Pr}_2^{-1}(\mathcal{F}_X)))
\]
where $\operatorname{Pr}_1 : M(X^\vee) \to \operatorname{Lie} (X^\vee) = M(X^\vee)/V M(X^\vee)$ (resp. $\operatorname{Pr}_2 : M(X) \to \operatorname{Lie} (X) = M(X)/V M(X)$) is the natural quotient map.

As in the proof of \cite[Proposition 3.5]{HLS2} the relation $\Pi M^\sharp \subset M \mathrel{\overset{\scriptscriptstyle 2h}{\subset}} M^\sharp$ comes from the polarization $\lambda$ and the relation $\Pi M \subset \tau^{-1}(M) \subset \Pi^{-1} M$ is equivalent to $ \pi_0 M \subset VM \subset M $. (Note that the Hodge filtration $\operatorname{Fil}(X) \subset D(X)$ can be identified with $ VM / \pi_0M \subset M / \pi_0M$.) 
The transition conditions are equivalent to $\Pi M'\subset M''\subset M'$. Finally, the conditions $V M^\sharp \subset M' \subset \tau^{-1}(M^\sharp) \cap M^\sharp$ and $\operatorname{length}(M^\sharp/M') = 1$ translate to
\[V M^\sharp \subset M' \subset M^\sharp, \quad \Pi M' \subset V M^\sharp, \quad \dim_{\kappa}(M^\sharp/M') = 1,
\] 
which are in turn equivalent to 
\[
\mathcal{F}_{X^\vee} \subset \operatorname{Lie} (X^\vee), \quad \dim_{\kappa}(\mathcal{F}_{X^\vee}) = n - 1, \quad \Pi \cdot \mathcal{F}_{X^\vee} = \{0\}, \quad \Pi \cdot \operatorname{Lie} (X^\vee) \subset \mathcal{F}_{X^\vee}.
\]
Similarly, $V M \subset M'' \subset \tau^{-1}(M) \cap M$ and $\operatorname{length}(M / M'') = 1$ translate to 
\[
\mathcal{F}_X \subset \operatorname{Lie} X, \quad \dim_{\kappa}(\mathcal{F}_X) = n - 1, \quad \Pi \cdot \mathcal{F}_X = \{0\}, \quad \Pi \cdot \operatorname{Lie} (X) \subset \mathcal{F}_X.
\]
Hence the filtration $\mathcal{F}_X \subset \operatorname{Lie} (X)$ (resp. $\mathcal{F}_{X^\vee} \subset \operatorname{Lie} (X^\vee) $) satisfies the splitting conditions by Remark \ref{DualSpace}. We have translated all conditions in the definition of $\mathcal{N}_n^{\rm Kra}$ in terms of relative Dieudonné modules.
\end{proof}

\subsection{Bruhat-Tits strata}\label{BT_strata} 
We fix a one-dimensional hermitian $O_F$-module $ (\mathbb{Y}, \iota_{\mathbb{Y}}, \lambda_{\mathbb{Y}})$ over $\Spec \bar{k}$ of type 0. Define
\begin{equation} \label{eq:Vn-def}
\mathbb{V} := \operatorname{Hom}_{O_F}(\mathbb{Y}, \mathbb{X}) \otimes_{O_F} F.
\end{equation}
The vector space $\mathbb{V} $ is equipped with a hermitian form $(\, ,\, )_{\mathbb{V}} $ such that for any $x, y \in \mathbb{V} $,
\begin{equation} \label{eq:hermitian-V}
(x, y)_{\mathbb{V} } = \lambda_{\mathbb{Y}}^{-1} \circ y^\vee \circ \lambda_{\mathbb{X}} \circ x \in \operatorname{End}(\mathbb{Y}) \otimes_{O_F} F \cong F, 
\end{equation}
where $y^\vee$ is the dual quasi-homomorphism of $y$ and the last isomorphism is given by $\iota^{-1}_{\mathbb{Y}}$. The hermitian spaces $(\mathbb{V}, (\, ,\, )_{\mathbb{V}})$ and $(C, h(\,, \,))$ are related by the $F$-linear isomorphism
\begin{equation} \label{eq:b-isom}
b : \mathbb{V} \to C, \quad x \mapsto x(e),
\end{equation}
where $e$ is a generator of the $\tau$-fixed points of the $O_{F_0}$-relative Dieudonné module $M(\mathbb{Y})$; in particular, $\mathbb{V}$ and $C$ are isomorphic as hermitian spaces (see \cite[\S 2.2]{HLSY}). We will sometimes identify $\mathbb{V}$ with $C$.
\quash{
\begin{Definition}
  \rm{
(1) For an $O_F$ lattice $L \subset \mathbb{V}$, define the subfunctor $\mathcal{Z}(L)$ of $ \mathcal{N}_n$ such that $\mathcal{N}_n(S)$ is the set of isomorphism classes of tuples $ (X, \iota, \lambda, \rho) \in \mathcal{N}_n(S)$ such that for any $x \in L \subset \mathbb{V}$ the quasi-homomorphism $ \rho^{-1} \circ x \circ \rho_Y: \mathbb{Y} \times_{\bar{k}} \bar{S} \to X \times_S \bar{S}$ extends to a homomorphism $ Y \to X $.

(2) For an $O_F$ lattice $L \subset \mathbb{V}$, define the subfunctor $\mathcal{Y}(L^{\sharp})$ of $ \mathcal{N}_n$ such that $\mathcal{N}_n^\loc(S)$ is the set of isomorphism classes of tuples $ (X, \iota, \lambda, \rho) \in \mathcal{N}_n(S)$ such that for any $x^{^\sharp} \in \Lambda^{\sharp} \subset \mathbb{V}$ the quasi-homomorphism $ \rho^\vee \circ \lambda_{\mathbb{X}} \circ x^{^\sharp} \circ \rho_Y: \mathbb{Y} \times_{\bar{k}} \bar{S} \to X^\vee \times_S \bar{S}$ extends to a homomorphism $ Y \to X^\vee $.
}
\end{Definition}
}

By (relative) Dieudonn\'e theory, the lattices $\breve{\Lambda}$ and $\breve{\Lambda}^\sharp$ correspond to the strict $O_{\breve{F}_0}$-modules $X_\Lambda$ and $X_{\Lambda^\sharp}$ over $\bar{k}$, respectively, with quasi-isogenies $\rho_\Lambda : X_\Lambda \to \mathbb{X}$ and $\rho_{\Lambda^\sharp} : X_{\Lambda^\sharp} \to \mathbb{X}$. 
We define the following two kinds of Bruhat--Tits (BT) strata for the special fiber $\overline{\mathcal{N}}_n$  of $\mathcal{N}_n$ (see also \cite[Definition 2.2]{HLS2}):

\begin{Definition}\label{Isogenies}
\rm{
Fix an even integer $0\leq 2h\leq n$. Let $L_{\mathcal{Z}}$ denote the set of all vertex lattices in $C$ of type $2t \geq 2h$, and let $L_{\mathcal{Y}}$ denote the set of all vertex lattices in $C$ of type $2t\leq 2h$.
\begin{enumerate}
  \item For any $\Lambda \in L_{\mathcal{Z}}$, the $\mathcal{Z}$-stratum $\mathcal{Z}^{\rm loc}(\Lambda)$ is the subfunctor of $\overline{\mathcal{N}}_n$ that assigns to each $\bar{k}$-scheme $S$ the set of tuples $(X, \iota, \lambda, \rho)$ such that the composition $\rho_{\Lambda,X} := \rho^{-1} \circ (\rho_\Lambda)_S$ is an isogeny.

  \item For any $\Lambda \in L_{\mathcal{Y}}$, the $\mathcal{Y}$-stratum $\mathcal{Y}^{\rm loc}(\Lambda^\sharp)$ is the subfunctor of $\overline{\mathcal{N}}_n$ that assigns to each $\bar{k}$-scheme $S$ the set of tuples $(X, \iota, \lambda, \rho)$ such that the composition $\rho_{\Lambda^\sharp, X^\vee} := \rho^\vee \circ \lambda_{\mathbb{X}} \circ \rho_{\Lambda^\sharp}$ is an isogeny, where $\rho_{\Lambda^\sharp} = \rho_\Lambda \circ \lambda_\Lambda^{-1}$.
\end{enumerate}
}
\end{Definition}
By \cite[Lemma 2.10]{RZbook}, $\mathcal{Z}^{\rm loc}(\Lambda)$ and $\mathcal{Y}^{\rm loc}(\Lambda^\sharp)$ are closed formal subschemes of $\overline{\mathcal{N}}_n$ (see also \cite[\S 2.1]{HLS2}). Using the same reasoning as in \cite[Lemma 4.2]{VW}, it follows that they are representable by projective schemes over $\bar{k}$. Also, these schemes are reduced (see \cite[Corollary 4.8]{HLS2}) and so they lie in the reduced subscheme $  \calN_{n, {\rm red}}$ of $  \overline{\mathcal{N}}_n$.

\begin{Definition}\label{def 2.10}
{\rm
With notation as above,  we define the following two kinds of BT strata for the special fiber of the Kr\"amer RZ-space $\overline{\mathcal{N}}_n^{\rm Kra}$.
\begin{enumerate}
    \item $\mathcal{Z}^{\rm Kra}(\Lambda) (S) $ is the set of isomorphism classes of tuples $(X, \iota, \lambda, \rho, \mathcal{F}_X, \mathcal{F}_{X^\vee}) \in \overline{\mathcal{N}}_n^{\rm Kra} (S)$ such that $(X, \iota, \lambda, \rho) \in \mathcal{Z}^{\rm loc}(\Lambda)(S)$ and if $\Lambda$ is of type $2t \neq 2h $, we require in addition that $ x_{*} (\operatorname{Lie}(Y\times S)) \subset \mathcal{F}_X \cap\calF_{X^\vee}$ for any $x \in \Lambda$. 
    \item $\mathcal{Y}^{\rm Kra}(\Lambda^\sharp) (S)$ is the set of isomorphism classes of tuples $(X, \iota, \lambda, \rho, \mathcal{F}) \in \overline{\mathcal{N}}_n^{\rm Kra}(S) $ such that $(X, \iota, \lambda, \rho) \in \mathcal{Y}^{\rm loc}(\Lambda^\sharp)(S)$ and if $\Lambda$ is of type $2t \neq 2h $, we require in addition that $ x^{\sharp}_{*} (\operatorname{Lie}(Y\times S)) \subset \mathcal{F}_X \cap\calF_{X^\vee}$ for any $x^{\sharp} \in \Lambda^{\sharp}$. 
\end{enumerate}
}\end{Definition}

Recall that we have the forgetful map $\phi: \overline{\mathcal{N}}^\spl_n \rightarrow \overline{\mathcal{N}}_n$. The corresponding strata for the special fiber of the maximal flat closed formal subscheme $ \overline{\mathcal{N}}^\spl_n$ are defined as follows:
\begin{equation}\label{eq ZYstrata}
\mathcal{Z}(\Lambda) : = \mathcal{Z}^{\rm loc}(\Lambda) \times_{ \overline{\mathcal{N}}_n}  \overline{\mathcal{N}}^\spl_n \quad \text{and} \quad \mathcal{Y}(\Lambda^\sharp) : = \mathcal{Y}^{\rm loc}(\Lambda^\sharp) \times_{ \overline{\mathcal{N}}_n}  \overline{\mathcal{N}}^\spl_n,  
\end{equation}
which are closed formal subschemes of $\overline{\mathcal{N}}^\spl_n$. 

Using Propositions \ref{DLoc} and \ref{DSpl} we can naturally obtain a lattice-theoretic characterization of the BT-strata of $\overline{\mathcal{N}}_n$ and $\overline{\mathcal{N}}_n^{\rm Kra}$:
\begin{Proposition}\label{DieudLatt}
Let $ \kappa$ be a perfect field over $\bar{k}$. The $\kappa$-points of the BT-strata can be described as follows: 

(1) Assume $\Lambda\subset C$ is a vertex lattice of type $2t \geq 2h$.
\begin{itemize}
    \item For $t= h$, we have
    \begin{flalign*}
    & \mathcal{Z}^{\rm loc}(\Lambda)(\kappa) = \left\{ (X, \iota, \lambda, \rho) \in \overline{\mathcal{N}}_n(\kappa) \mid \Lambda \otimes W_{O_{F_0}}(\kappa) = M(X)  \right\}, \\  
    & \mathcal{Z}^{\rm Kra}(\Lambda)(\kappa) = \left\{ (X, \iota, \lambda, \rho, \mathcal{F}_X, \mathcal{F}_{X^\vee}) \in \overline{\mathcal{N}}^{\rm Kra}_{n}(\kappa) \mid \Lambda \otimes W_{O_{F_0}}(\kappa) = M(X)  \right\}.
    \end{flalign*}
  \item For $t > h$, we have that $ \mathcal{Z}^{\mathrm{loc}}(\Lambda)(\kappa)$ is equal to
   \[  
  \left\{ (X, \iota, \lambda, \rho) \in \overline{\mathcal{N}}_n(\kappa) \;\middle|\;
  \Lambda \otimes W_{O_{F_0}}(\kappa) \subset M(X) \subset M(X)^\sharp \subset \Lambda^\sharp \otimes W_{O_{F_0}}(\kappa)
  \right\}
   \]
and $\mathcal{Z}^{\mathrm{Kra}}(\Lambda)(\kappa) $ is equal to 
\[
\left\{ (X, \iota, \lambda, \rho, \mathcal{F}_X, \mathcal{F}_{X^\vee}) \in \overline{\mathcal{N}}^{\rm Kra}_{n}(\kappa) ~\middle|~
  \begin{aligned}
    &(X, \iota, \lambda, \rho) \in \mathcal{Z}^{\mathrm{loc}}(\Lambda)(\kappa), \\
    &\Lambda \otimes W_{O_{F_0}}(\kappa) \subset M'(X) \subset M(X)^\sharp,\\
    &\Lambda \otimes W_{O_{F_0}}(\kappa) \subset M''(X) \subset M(X)
  \end{aligned}
  \right\}.
\]
\end{itemize}

(2) Assume $\Lambda\subset C$ is a vertex lattice of type $2t \leq 2h$.
\begin{itemize}
    \item For $t=h$, we have
  \begin{flalign*}
   &  \mathcal{Y}^{\rm loc}(\Lambda^{\sharp})(\kappa) = \left\{ (X, \iota, \lambda, \rho) \in \overline{\mathcal{N}}_n(\kappa) \mid \Lambda \otimes W_{O_{F_0}}(\kappa) = M(X)  \right\}, \\  
  &  \mathcal{Y}^{\rm Kra}(\Lambda^{\sharp})(\kappa) = \left\{ (X, \iota, \lambda, \rho, \mathcal{F}_X, \mathcal{F}_{X^\vee}) \in \overline{\mathcal{N}}^{\rm Kra}_{n}(\kappa) \mid \Lambda \otimes W_{O_{F_0}}(\kappa) = M(X)  \right\}.
    \end{flalign*}

\item For $t < h$, we have that $ \mathcal{Y}^{\mathrm{loc}}(\Lambda^\sharp)(\kappa)$ is equal to
\[
\left\{ (X, \iota, \lambda, \rho) \in \overline{\mathcal{N}}_n(\kappa) \;\middle|\;
  \pi \Lambda^\sharp \otimes W_{O_{F_0}}(\kappa) \subset \pi M(X)^\sharp \subset M(X) \subset \Lambda \otimes W_{O_{F_0}}(\kappa)
  \right\}
\]
and $\mathcal{Y}^{\mathrm{Kra}}(\Lambda^\sharp)(\kappa) $ is equal to 
\[
\left\{ (X, \iota, \lambda, \rho, \mathcal{F}_X, \mathcal{F}_{X^\vee}) \in \overline{\mathcal{N}}^{\rm Kra}_{n}(\kappa) ~\middle|~
  \begin{aligned}
    &(X, \iota, \lambda, \rho) \in \mathcal{Z}^{\mathrm{loc}}(\Lambda)(\kappa), \\
    &\Lambda^\sharp \otimes W_{O_{F_0}}(\kappa) \subset M'(X) \subset M(X)^\sharp,\\
    &\Lambda^\sharp \otimes W_{O_{F_0}}(\kappa) \subset M''(X) \subset M(X)
  \end{aligned}
  \right\}.
\]
\end{itemize}
\qed
\end{Proposition}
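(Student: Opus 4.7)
The plan is to translate the moduli conditions defining each Bruhat--Tits stratum (Definitions \ref{Isogenies} and \ref{def 2.10}) into lattice-theoretic inclusions via the Dieudonn\'e-theoretic dictionaries of Propositions \ref{DLoc} and \ref{DSpl}. Throughout I would use the framing $\rho$ to identify $M(X)$ with a sublattice of $N\otimes W_{O_{F_0}}(\kappa)$ and compare it with $\breve{\Lambda}=\Lambda\otimes_{O_F}W_{O_{F_0}}(\kappa)$, following the pattern of the proof of Proposition \ref{DSpl}.

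For the loc parts, I would first handle $\mathcal{Z}^{\rm loc}(\Lambda)$: the map $\rho_{\Lambda,X}=\rho^{-1}\circ(\rho_\Lambda)_S$ is a genuine isogeny exactly when the quasi-isogeny $\rho^{-1}\circ\rho_\Lambda$ extends to a morphism of $p$-divisible groups, which under relative Dieudonn\'e theory becomes the lattice inclusion $\breve{\Lambda}\subset M(X)$. Dualizing with respect to the hermitian form on $N$ gives $M(X)^\sharp\subset \breve{\Lambda}^\sharp$; combined with the type-$2h$ relation $M(X)\subset M(X)^\sharp$ from Proposition \ref{DLoc}, one arrives at
\[
\breve{\Lambda}\;\subset\;M(X)\;\subset\;M(X)^\sharp\;\subset\;\breve{\Lambda}^\sharp,
\]
which is the stated description for $t>h$. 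For $t=h$ the two outer terms of this chain already have matching colength $2h$, forcing $\breve{\Lambda}=M(X)$. The $\mathcal{Y}^{\rm loc}(\Lambda^\sharp)$ case is dual: the isogeny condition on $\rho_{\Lambda^\sharp,X^\vee}=\rho^\vee\circ\lambda_{\mathbb{X}}\circ\rho_{\Lambda^\sharp}$ translates (after identifying $M(X^\vee)$ with $M(X)^\sharp$ via the polarization) to $\breve{\Lambda}^\sharp\subset M(X)^\sharp$; dualizing and combining with $\pi\breve{\Lambda}^\sharp\subset\breve{\Lambda}$ and $\pi M(X)^\sharp\subset M(X)$ yields the stated chain $\pi\breve{\Lambda}^\sharp\subset \pi M(X)^\sharp\subset M(X)\subset \breve{\Lambda}$, with the $t=h$ case again collapsing by length count. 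These arguments are essentially recorded in \cite[\S 4]{HLS2}.

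For the Kr\"amer versions, I would incorporate the additional data $\mathcal{F}_X\subset\operatorname{Lie}(X)$ and $\mathcal{F}_{X^\vee}\subset\operatorname{Lie}(X^\vee)$. By the construction in the proof of Proposition \ref{DSpl}, these correspond to $M''(X)/VM(X)$ and $M'(X)/VM(X)^\sharp$, respectively. For $x\in\Lambda\subset\mathbb{V}\simeq C$, the induced morphism $x\colon Y\to X$ sends the generator $e\in\operatorname{Lie}(Y)$ to the class of $x$ in $M(X)/VM(X)$, so $x_{*}(\operatorname{Lie}(Y))\subset\mathcal{F}_X$ becomes $x\in M''(X)$; composing with the polarization to land in $\operatorname{Lie}(X^\vee)$ makes the analogous condition read $x\in M'(X)$. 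Letting $x$ range over $\breve{\Lambda}$ produces precisely the inclusions $\breve{\Lambda}\subset M'(X)$ and $\breve{\Lambda}\subset M''(X)$ in the $\mathcal{Z}^{\rm Kra}$ statement, while the $t=h$ case collapses since $M(X)=\breve{\Lambda}$ already pins down $M'$ and $M''$. The $\mathcal{Y}^{\rm Kra}(\Lambda^\sharp)$ description is the formally parallel translation, using $x^\sharp\in\breve{\Lambda}^\sharp$ and the composite morphism $Y\to X^\vee$ induced by $\lambda_{\mathbb{X}}$.

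The main obstacle I anticipate is the bookkeeping in the Kr\"amer case: one must keep consistent track of the three identifications $M(X^\vee)\simeq M(X)^\sharp$ (via $\lambda$), $\mathcal{F}_X\leftrightarrow M''(X)$ and $\mathcal{F}_{X^\vee}\leftrightarrow M'(X)$ (via the splitting dictionary), and the hermitian-dual operation $\sharp$ on $N$. Getting the correct inclusions in the $\mathcal{Y}$ case---where the condition $x^\sharp_{*}(\operatorname{Lie}(Y))\subset \mathcal{F}_X\cap\mathcal{F}_{X^\vee}$ requires interpreting $\mathcal{F}_X$ through the co-polarization $\lambda^\vee$---is the most delicate point; once this is handled consistently, the remainder of the proof reduces to the chain of inclusions above together with the length match in the self-dual case $t=h$.
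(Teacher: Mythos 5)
Your proposal follows exactly the route the paper intends: the paper states this proposition with no proof beyond the remark that it follows from Propositions \ref{DLoc} and \ref{DSpl}, and your translation of the isogeny conditions and the Lie-algebra conditions into the lattice inclusions $\breve{\Lambda}\subset M$, $\breve{\Lambda}\subset M'$, $\breve{\Lambda}\subset M''$ (and their duals for the $\mathcal{Y}$-strata), together with the length count at $t=h$, is the intended argument. One small correction: in the $t=h$ Kr\"amer case the absence of conditions on $M',M''$ is not because $M(X)=\breve{\Lambda}$ ``pins down'' $M'$ and $M''$ (it does not --- cf.\ Proposition \ref{prop KraNexc}, where $\mathcal{Z}^{\rm Kra}(\Lambda_0)$ has dimension $n-1$ over the single worst point), but simply because Definition \ref{def 2.10} imposes the extra condition $x_{*}(\operatorname{Lie}(Y\times S))\subset\mathcal{F}_X\cap\mathcal{F}_{X^\vee}$ only when the type is $\neq 2h$.
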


Let $ \kappa$ be a perfect field over $\bar{k}$ and let $\Lambda_0\subset C$ be a vertex lattice of type $2h$. By \cite[Corollary 2.11]{ZacZhao3}, we have that $ \mathcal{Z}^{\rm loc}(\Lambda_0)(\kappa)$ is equal to $  \mathcal{Y}^{\rm loc}(\Lambda_0^{\sharp})(\kappa)$. Both of them as sets contain a discrete point $\{\Lambda_0 \otimes W_{O_{F_0 }}(\kappa) \}$ (called the {\it worst point}) in the RZ space. Note that under the local model diagram, the points $ M = \Lambda_0 \otimes W_{O_{F_0 }}(\kappa) \in \mathcal{Z}^{\rm loc}(\Lambda_0)(\kappa)$, where $\Lambda_0$ is a vertex lattice of type $2h$, correspond to the worst point of the associated local model ${\rm M}_n^{\rm loc}$ (see \cite[\S 5.3]{LRZ}).

\section{Strata models}\label{StrataBlModels}
In this section, we introduce the strata models attached to the BT strata of the special fibers $\overline{\calN}_n^\Kra$ and $\overline{\calN}_n^\spl$. We begin by recalling the (naive) splitting models. For each lattice $\Lambda$ (resp.\ $\Lambda^\sharp$) we then construct, by explicit linear--algebraic conditions, a closed subscheme of the naive splitting model (resp.\ of the splitting model) attached to the corresponding stratum. In \S \ref{LPBT}, we prove that these strata models are \'etale locally isomorphic to $\calZ^\Kra(\Lambda)$ (resp.\ $\calY^\Kra(\Lambda^\sharp)$) in $\overline{\calN}_n^\Kra$, and to their pull backs $\calZ(\Lambda)$ (resp.\ $\calY(\Lambda^\sharp)$) in $\overline{\calN}_n^\spl$. In particular, geometric ``local'' properties of the BT strata can be verified on these simpler models. 

\subsection{Review of the splitting model}\label{RSpl}

In this subsection we briefly recall the ramified unitary local model and the splitting model at a maximal vertex–stabilizer level; 
see \cite{Luo} for the local model and \cite{HLS1} for the splitting model.

Let $F/F_0$ be a ramified quadratic field extension with discrete valuation rings $O_F/O_{F_0}$. Let $\pi \in F$ (resp. $\pi_0$) be a uniformizer of $O_F$ (resp. $O_{F_0}$) with $\pi^2 = \pi_0$. Let $k$ be the perfect residue field of characteristic $\neq 2$.  Consider the $F$-vector space $F^n$ of dimension $n > 3$ and let 
\[
h: F^n \times F^n \rightarrow F
\]
be a split $F/F_0$-hermitian form, i.e. there is a basis $e_1, \dots, e_n$ of $F^n$ such that
\[
h(ae_i,be_{n+1-j}) = \overline{a}b\cdot \delta_{i,j} \quad \text{for  all} \quad a,b \in F, 
\]
where $a \mapsto \overline{a}$ is the non-trivial element of $\text{Gal}(F/{F_0})$. Attached to $h$ are the respective alternating and symmetric $F_0$-bilinear forms $F^n \times F^n \rightarrow F_0$ given by
\[
\langle x, y \rangle = \frac{1}{2}\text{Tr}_{F /F_0} (\pi^{-1}\phi(x, y)) \quad \text{and} \quad ( x, y ) =  \frac{1}{2}\text{Tr}_{F /F_0} (\phi(x, y)).
\]
For any $O_F$-lattice $\Lambda$ in $F^n$, we denote by $\Lambda^\sharp = \{v \in F^n | h( v, \Lambda ) \subset O_{F} \}$, $\Lambda^\vee = \{v \in F^n | \langle v, \Lambda \rangle \subset O_{F_0} \}$ and $\Lambda^\bot = \{v \in F^n | ( v, \Lambda ) \subset O_{F_0} \}$ the dual lattices respectively for the hermitian, alternating and symmetric forms.
The forms $\langle \, , \, \rangle $ and $ (\, , \,) $
induce perfect $O_{F_0}$-bilinear pairings
\begin{equation}\label{perfectpairing}
    \Lambda \times \Lambda^\vee \xrightarrow{\langle \, , \, \rangle } O_{F_0}, \quad \Lambda^\bot \times \Lambda \xrightarrow{ (\, , \,)} O_{F_0}
\end{equation}
for all $\Lambda$ and we have $\Lambda^\sharp= \Lambda^\vee=\pi \Lambda^\bot$.
For $i= kn+j$ with $0\leq j< n$, we define the standard lattices
\begin{equation}\label{eq 212}
\Lambda_i = \pi^{-k}\cdot \text{span}_{O_F} \{\pi^{-1}e_1, \dots, \pi^{-1}e_j, e_{j+1}, \dots, e_n\}.	
\end{equation} 
Note that $ \Lambda_{n-i}:=\Lambda^\bot_i $ and $ \Lambda_{-i}:=\Lambda^\sharp_i=\Lambda^\vee_i $. Then $\Lambda_i$'s form a self-dual periodic lattice chain $\mathcal{L}=\{\Lambda_i\}_{i\in \ZZ}$.
For nonempty subsets $I \subset \{0, \dots, m\}$ where $m = \lfloor n/2\rfloor$, 
let $\calL_I=\{\Lambda_i\}_{i\in \pm I+n\ZZ}$  be a self-dual periodic lattice chain. Let $\scrG_I = \underline{{\rm Aut}}(\mathcal{L}_I)$ be the (smooth) group scheme over $O_{F_0}$ with $P_I = \scrG_I(O_{F_0})$ the subgroup of $G(F_0)$ fixing the lattice chain $\mathcal{L}_I$ (see \cite[\S 1.2.3(b)]{PR} for more details). For even integers $2h, 2t$, where $0\leq 2h\neq 2t\leq n$, we define the following index sets: $[2h]=\pm h+n\ZZ$, $[2h, 2t]=\{\pm h, \pm t\}+n\ZZ$ and let $\calL_{[2h]}$, $\calL_{[2h,2t]}$ be the standard self-dual lattice chains.

In this paper, we always assume that $P_I$ is a (quasi)- parahoric stabilizer subgroup, i.e., $I=\{h\}$ for any $0\leq h\leq m$ and dimension $n$. We first recall the definition of the {\it wedge local model} $\M^{[2h],\wedge}_n$. For any $O_F$-algebra $R$, let $\Lambda_{i,R}$ be the tensor product $\Lambda_i\otimes_{O_{F_0}} R$ as an $O_F\otimes_{O_{F_0}} R$-module. Set $\Pi=\pi\otimes 1$ and $\pi=1\otimes \pi$.

\begin{Definition}
{\rm
The wedge local model $\M_n^{[2h], \wedge}$ is a projective scheme over $\Spec O_F$ representing the functor that sends each $O_F$-algebra $R$ to the set of subsheaves $\calF_i \subset  \Lambda_{i,R}$, where $i\in [2h]$, such that
\begin{itemize}
    \item For all $i\in [2h]$, $\calF_i$ as $O_F\otimes R$-modules are Zariski locally on $R$ direct summands of rank $n$.
    \item For all $i, j\in [2h]$ with $i<j$, the maps induced by the inclusions $  \Lambda_{i,R}\subset \Lambda_{j, R}$ restrict to maps
    \[
    \calF_i \rightarrow \calF_j.
    \]
    \item For all $i\in [2h]$, the isomorphism $\Pi: \Lambda_{i, R} \rightarroweq \Lambda_{i-n, R}$ identifies
    \[
    \calF_i \rightarroweq \calF_{i-n}.
    \]
    \item For all $i\in [2h]$, $\calF_{-i}$ is the orthogonal complement of $\calF_i$ with respect to $ \bb \,,\,\pp : \Lambda_{-i} \times  \Lambda_i\rightarrow R$.
    \item (Kottwitz condition) For all $i\in [2h]$,
    \[
    \text{char}_{\Pi |  \mathcal{F}_i} (X)= (X + \pi)^{n-1}(X - \pi) .
    \] 
    \item (Wedge condition) For all $i\in [2h]$,
    \[
    \wedge^2(\Pi-\pi\mid \calF_i)=0, \quad \wedge^n(\Pi+\pi\mid \calF_i)=0.
    \]
\end{itemize}   
}
\end{Definition}
The wedge local model $\M^{[2h],\wedge}_n$ is not always flat (see \cite{Sm3} for more details) and we define the {\it local model} $\M^{\loc, [2h]}_n$ as the flat closure of $\M^{[2h],\wedge}_n$. The following theorem is due to \cite{Arz}, \cite{Luo}, \cite{P},  \cite{Sm3}, \cite{Yu} among others (we refer to the introduction of \cite{Luo} for the precise references).

\begin{Theorem}\label{thmloc}
Let the signature $(n-s,s)=(n-1,1)$, the local model $\M^{\loc, [2h]}_n$ satisfies: 
\begin{itemize}
    \item[(1).] When $n=2m, h=m$ ($\pi$-modular case), the local model $\M^{\loc, [2h]}_n$ is smooth, connected, with dimension $n$;
    \item[(2).] When $h$ is not $\pi$-modular case, the local model $\M^{\loc, [2h]}_n$ is flat, normal and Cohen-Macaulay with dimension $n$. The special fiber of $\M^{\loc, [2h]}_n$ is reduced with dimension $n-1$. Each of its irreducible components is normal and Cohen-Macaulay. The number of irreducible components are:
    \begin{itemize}
        \item[(a)] When $h=0$, the special fiber of $\M^{\loc, [2h]}_n$ is irreducible;
        \item[(b)] When $n=2m+1$ and $h=m$, the special fiber of $\M^{\loc, [2h]}_n$ is smooth and irreducible;
        \item[(b)] When $n=2m$ and $h=m-1$, the special fiber of $\M^{\loc, [2h]}_n$ has three irreducible components;
        \item[(c)] For other cases, the special fiber of $\M^{\loc, [2h]}_n$ has two irreducible components.
    \end{itemize}
\end{itemize}
\end{Theorem}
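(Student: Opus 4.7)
The theorem is a compilation of results from the cited references, and my plan is to indicate how each assertion would be obtained. The overarching strategy is to realize $\M^{\loc,[2h]}_n$ as a subscheme of the twisted affine Grassmannian attached to the (quasi-)parahoric $\scrG_{[2h]}$ (following Pappas--Rapoport \cite{P}) and then invoke the theory of affine Schubert varieties for the ramified unitary group. The first task is to check that the flat closure $\M^{\loc,[2h]}_n$ of the wedge local model agrees with a moduli-theoretic description---the naive local model cut out by the wedge condition and, in the relevant ranges, the spin condition. For the exotic smooth cases ($n=2m$, $h=m$ and $n=2m+1$, $h=m$) and the self-dual case ($h=0$), this goes back to \cite{Arz}, \cite{P}, \cite{Sm3}, \cite{Yu}; for a general maximal vertex level $[2h]$ with signature $(n-1,1)$ it is the main theorem of \cite{Luo}.

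With this identification in place, flatness is automatic. The dimension $n$ follows because the generic fiber is the minuscule Grassmannian attached to the coweight $\mu = (1,0,\dots,0,-1)$ of signature $(n-1,1)$, hence projective homogeneous of dimension $n-1$, and $\M^{\loc,[2h]}_n$ is flat over $O_F$. Normality and Cohen--Macaulayness of $\M^{\loc,[2h]}_n$ and of its irreducible components descend from the corresponding properties of the ambient affine Schubert varieties, which are standard in the ramified unitary setting via Frobenius-splitting and coherence arguments. For part (1), the $\pi$-modular case, I would present an explicit open cover of $\M^{\loc,[2h]}_n$ by affine charts isomorphic to $\mathbb{A}^n_{O_F}$---the wedge and Kottwitz conditions become vacuous in this range---giving smoothness and connectedness.

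Reducedness of the special fiber follows from its identification with the union of the affine Schubert varieties parametrized by the $\mu$-admissible set $\mathrm{Adm}(\mu)$ in the extended affine Weyl group. The irreducible components then correspond to the top-length elements of $\mathrm{Adm}(\mu)$, equivalently to the $(W_{[2h]},W_{[2h]})$-double cosets among the $W_0$-translates of $t_\mu$ that survive in the parahoric Weyl group. A direct case analysis produces the stated counts: a single translate is dominant when $h=0$; in the case $n=2m+1$, $h=m$, the parahoric is large enough that the unique top-length double coset yields a smooth Schubert variety; the case $n=2m$, $h=m-1$ admits exactly three top-length double cosets because of the interaction of the polarization with the near-middle vertex; and all other cases produce exactly two.

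The main obstacle is the flatness identification itself---equivalently, the statement that the wedge (together with the spin condition where needed) already cuts out a flat scheme. This is the technical heart of \cite{Luo}, resting on the coherence conjecture of Pappas--Rapoport (proved by Zhu) together with delicate lattice-chain computations. I would take this result as input and deduce the remaining assertions from the affine-Schubert-variety framework, as sketched above.
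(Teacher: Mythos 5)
The paper offers no proof of this theorem at all: it is stated as a compilation of known results, with the sentence preceding it attributing everything to \cite{Arz}, \cite{Luo}, \cite{P}, \cite{Sm3}, \cite{Yu}, which is exactly the stance your proposal takes by deferring the flatness/moduli identification to \cite{Luo} and sketching the standard affine--Schubert-variety machinery (admissible set, coherence, Frobenius splitting) behind the remaining assertions. Your sketch is consistent with how those references actually proceed; the one inaccuracy worth flagging is the parenthetical claim that ``the wedge and Kottwitz conditions become vacuous'' in the $\pi$-modular case --- there the spin condition is the essential extra input that removes the superfluous components and yields the smooth model --- but since you take the moduli identification as a black box anyway, this does not affect the overall argument.
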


Now we consider the splitting model $\M^{\spl, [2h]}_n$. The splitting model is a certain modification of the local model which ``refines" the ramification information by adding certain additional data to the local model. We first recall the naive splitting model $\M^{\nspl, [2h]}_n$.

\begin{Definition}\label{Def. 3.3}
{\rm
The naive splitting model $\M^{\nspl,[2h]}_n$ is a projective scheme over $\Spec O_F$ representing the functor that sends each $O_F$-algebra $R$ to the set of subsheaves 
\[
\begin{array}{l}
   \calG_i\subset \calF_i \subset  \Lambda_{i,R}, \quad ~\text{where}~ i\in [2h] 
\end{array}
\]
such that
\begin{itemize}
    \item For all $i\in [2h]$, $\calF_i$ (resp. $\calG_i$) as $O_F\otimes R$-modules are Zariski locally on $R$ direct summands of rank $n$ (resp. rank $1$).
    \item For all $i\in [2h]$, $(\calF_i)\in \M^{\loc, [2h]}_n\otimes R$.
    \item (Splitting condition)  For all $i\in [2h]$,
    \[
    (\Pi+\pi) \calF_{i} \subset \calG_{i}, \quad  (\Pi-\pi) \calG_{i} = (0).
    \]
\end{itemize}
}
\end{Definition}

The splitting model $\M^{\spl, [2h]}_n$ is the scheme-theoretic closure of the generic fiber $\M^{\nspl, [2h]}_{n,\eta}$ in $\M^{\nspl, [2h]}_n$. Both the local model $\M^{\loc,[2h]}_n$ and the naive splitting model $\M^{\nspl,[2h]}_n$ supports an action of $\scrG_{[2h]}$. By construction, there is a $\scrG_{[2h]}$-equivariant projective morphism 
\begin{equation}\label{eq tau}
\tau : \M^{\nspl,[2h]}_n \rightarrow \M^{\loc,[2h]}_n    
\end{equation}
which is given by $(\calF_i, \calG_{i}) \mapsto (\calF_i)$ on $R$-valued points. As in \cite[Definition 4.1]{Kr}, the forgetful morphism $\tau$ is well defined and induces an isomorphism on the generic fibers (see \cite[Lemma 2.4.1]{Luo}). Also, the morphism $\tau$ induces an isomorphism 
\[
\M^{\nspl,[2h]}_n\setminus \tau^{-1}(*) \cong  \M^{\loc,[2h]}_n\setminus\{*\} 
\]
(see \cite[Proposition 2.5.1]{HLS1}). 

\begin{Remarks}\label{rk 3.4}
{\rm
(1) The splitting model $\M^{\spl,[2h]}_n$ was first studied in \cite{Kr} for $h=0$. A moduli-theoretic description is given there for this case and is also known for $n=2m$ with $h=m$ \cite{ZacZhao2} and with $h=m-1$ \cite{Yu}; for general $h$ it remains unknown.

(2) When $h$ is not $\pi$-modular, there exists a special point $*=(\calF_i=\Pi\Lambda_{i,R})$ in the special fiber of the local model $\M^{\loc, [2h]}_n$, referred to as the {\it worst point}, which is the only closed Schubert cell that lies in the special fiber of the local model. Also, it is the only singular point of the irreducble components of the special fiber and of their intersections. We define the naive exceptional divisor $\NExc$ as $\NExc:=\tau^{-1}(*)\subset \M^{\nspl, [2h]}_n$, and the exceptional divisor $\Exc$ as the scheme-theoretic intersection $\Exc:=\tau^{-1}(*)\cap \M^{\spl, [2h]}_n$ (see \cite[Definition 2.5.2]{Luo}).

(3) When $h$ is $\pi$-modular, the local model does not possess a worst point. Moreover, in this case, the splitting model is isomorphic to the local model and the exceptional divisor $\Exc$ is empty (see \cite[Remark 2.5.3]{Luo}). Since the Bruhat-Tits stratification for this case has been treated by \cite{Wu}, we exclude this case and always assume that the index $I = \{h\}$ is not $\pi$-modular.
}\end{Remarks}

By Theorem \ref{thmloc}, the number of irreducible components of the special fiber of $\M^{\loc, [2h]}_n$ depends on the dimension $n$ and the level $I=\{h\}$. Let  $c=c(n,h)$ be the number of irreducible components and denote by $Z_i$ the irreducible components for $1\leq i\leq c$. 

\begin{Theorem}[\cite{Luo}, Theorem.~1.3.1]\label{thm splsemistable}
Assume $h$ is not $\pi$-modular. 
\begin{enumerate}
    \item The splitting model $\M^{\spl, [2h]}_n$ is flat and semi-stable of dimension $n$. 
    \item The special fiber of $\M^{\spl, [2h]}_n$ has $c+1$-irreducible components: $\tilde{Z}_i$ for $1\leq i\leq c$ and $\Exc$. These components are smooth and of dimension $n-1$;
    \item The forgetful morphism $\tau : \M^{\spl,[2h]}_n \rightarrow \M^{\loc,[2h]}_n$ maps $\tilde{Z}_i$ to $Z_i$ for $1\leq i\leq c$, and $\Exc$ to the worst point $*$.
\end{enumerate}
\end{Theorem}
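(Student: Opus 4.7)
The plan is to reduce everything to a local computation near the worst point $\ast$, since Theorem \ref{thmloc} already gives the needed global structure of $\M^{\loc,[2h]}_n$, and the forgetful morphism $\tau$ is an isomorphism over $\M^{\loc,[2h]}_n\setminus\{\ast\}$. Indeed, outside $\tau^{-1}(\ast)$ the splitting condition $(\Pi-\pi)\calG_i=0,\ (\Pi+\pi)\calF_i\subset\calG_i$ forces $\calG_i$ to be the uniquely determined line $(\Pi+\pi)\calF_i$; so $\M^{\nspl,[2h]}_n\setminus\tau^{-1}(\ast)\simeq\M^{\loc,[2h]}_n\setminus\{\ast\}$. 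This immediately transfers flatness, normality, Cohen--Macaulayness, and the existence/dimension of the irreducible components $Z_i$ (from Theorem \ref{thmloc}) to $\M^{\spl,[2h]}_n$ away from $\Exc$, and lets us define $\tilde Z_i$ as the scheme-theoretic closure of $Z_i\setminus\{\ast\}$ inside $\M^{\spl,[2h]}_n$, with $\tau(\tilde Z_i)=Z_i$ by construction, giving part (3).

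The heart of the argument is an explicit analysis of an affine neighborhood of $\ast$. At $\ast$ we have $\calF_i=\Pi\Lambda_{i,R}$, and on a suitable affine chart of $\M^{\loc,[2h]}_n$ (the one where $\calF_i$ is the graph of a homomorphism $\Pi\Lambda_{i,R}/\Pi^2\Lambda_{i,R}\to\Lambda_{i,R}/\Pi\Lambda_{i,R}$ compatible with the hermitian pairing, Kottwitz, and wedge conditions) one writes $\calF_i$ in matrix coordinates $(x_{ab})$ satisfying explicit quadratic relations with $\ast$ as the unique singular point. Adding the datum of a line $\calG_i\subset\calF_i$ satisfying the splitting condition, parametrized by homogeneous coordinates $[y_0:\cdots:y_{n-1}]$, realizes $\M^{\nspl,[2h]}_n$ locally as a closed subscheme of a projective bundle cut out by bilinear equations between the $x_{ab}$ and products $y_ay_b$. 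Taking the scheme-theoretic closure of the generic fiber (which kills the naive components sitting entirely over the worst point) identifies $\M^{\spl,[2h]}_n$ locally with the blow-up of $\M^{\loc,[2h]}_n$ at $\ast$.

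From this blow-up description all remaining assertions follow. Flatness over $\Spec O_F$ is preserved because $\ast$ has support in the special fiber and the blown-up ideal is not a zero-divisor on $\M^{\loc,[2h]}_n$, so $\M^{\spl,[2h]}_n$ stays $O_F$-flat and of dimension $n$, which is part (1). The exceptional divisor $\Exc=\tau^{-1}(\ast)\cap\M^{\spl,[2h]}_n$ is the projectivization of the tangent cone to $\M^{\loc,[2h]}_n$ at $\ast$; in the coordinates above this cone is cut out by the leading quadratic terms inherited from the local model, and a direct check shows that $\Exc$ is smooth, irreducible, and of dimension $n-1$. Each strict transform $\tilde Z_i$ becomes regular at its unique preimage point of $\ast$ (the singular locus of $Z_i$ sits inside $\{\ast\}$), hence is smooth of dimension $n-1$, and one verifies in the same local chart that $\Exc$ meets each $\tilde Z_i$ transversally along a smooth Cartier divisor and that distinct $\tilde Z_i$, $\tilde Z_j$ meet as their counterparts in $\M^{\loc,[2h]}_n$ did. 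This produces the $c+1$ components of part (2) and the semi-stability in part (1).

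The main obstacle is the explicit affine-chart bookkeeping at $\ast$. The form of the local equations for $\M^{\loc,[2h]}_n$ near the worst point varies with $h$, and one must handle all non-$\pi$-modular levels uniformly while checking that the scheme-theoretic closure in $\M^{\nspl,[2h]}_n$ coincides with the blow-up of the local model at $\ast$ on the nose, and not merely birationally. Once this identification is nailed down, semi-stability, smoothness of the components, and the exact count $c+1$ are formal consequences of blowing up a regular center in a normal Cohen--Macaulay scheme whose special fiber has the reduced, well-understood structure given by Theorem \ref{thmloc}.
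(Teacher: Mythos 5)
Your proposal follows essentially the same route as the paper: the theorem is imported from the cited reference, and in \S\ref{AffineCharts3.3} the paper reviews exactly the reduction you describe --- $\tau$ is an isomorphism away from the worst point, so everything comes down to explicit affine charts $\U_{i_0,j_0}$ over $\tau^{-1}(*)$ and their flat closures (Propositions \ref{affinechartU} and \ref{prop splflat}), together with the identification of $\M^{\spl,[2h]}_n$ with the blow-up of $\M^{\loc,[2h]}_n$ at $*$ (Theorem \ref{thm 3.6}). One small correction: the preimage of $*$ in a strict transform $\tilde Z_i$ is the divisor $\tilde Z_i\cap\Exc$ rather than a single point, and the smoothness of $\tilde Z_i$ and of $\Exc$ is precisely the content of the chart-by-chart computation (blowing up an isolated singular point does not formally resolve it), which you correctly flag as the remaining bookkeeping.
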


Finally, we want to highlight that the splitting model $\M^{\spl, [2h]}_n$ is actually the blow-up of the local model $\M^{\loc, [2h]}_n$ along the worst point $*$.

\begin{Theorem}\label{thm 3.6}
Assume $h$ is not $\pi$-modular. The forgetful morphism $\tau : \M^{\spl,[2h]}_n \rightarrow \M^{\loc,[2h]}_n$ is the blow-up $ \M^{\bl,[2h]}_n$ of $\M^{\loc, [2h]}_n$ along the worst point $*$. Under this identification we have: a) $\Exc$ is the exceptional divisor of the blow-up and is isomorphic to the blow-up of $\PP_k^{n-1}$ along $\PP_k^{2h-1}$, and b) $\tilde{Z}_i$ is the strict transform
of $Z_i$ for $1\leq i\leq c$.
\end{Theorem}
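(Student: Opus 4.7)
The proof plan is to apply the universal property of the blow-up to the projective forgetful morphism $\tau\colon \M^{\spl,[2h]}_n \to \M^{\loc,[2h]}_n$. Writing $\M^{\bl,[2h]}_n := \mathrm{Bl}_*(\M^{\loc,[2h]}_n)$, recall the characterization: for any $\M^{\loc,[2h]}_n$-scheme $Y$ on which $\mathcal{I}_*$ pulls back to an invertible ideal there is a unique factorization $Y \to \M^{\bl,[2h]}_n$; and if moreover $Y \to \M^{\loc,[2h]}_n$ is proper birational, is the identity over the complement of $*$, and the induced map to $\M^{\bl,[2h]}_n$ is quasi-finite on exceptional fibers, then it is an isomorphism.

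The main technical step is to show that $\tau^*\mathcal{I}_*$ is locally principal on $\M^{\spl,[2h]}_n$. I would argue in affine charts near $\Exc$. Locally, $\mathcal{I}_*$ is generated by the matrix entries encoding $\calF_i - \Pi\Lambda_i$. After trivializing the line bundle $\calG_i$ on $\M^{\nspl,[2h]}_n$, the splitting relation $(\Pi+\pi)\calF_i \subset \calG_i$ realizes each generator of $\mathcal{I}_*$ as a product $g \cdot h_j$, where $g$ is the single coordinate cutting out $\NExc = \tau^{-1}(*)$ scheme-theoretically. The scheme-theoretic closure defining $\M^{\spl,[2h]}_n \subset \M^{\nspl,[2h]}_n$ removes the $\pi_0$-torsion components supported on $\NExc$, and on the resulting closure all generators of $\tau^*\mathcal{I}_*$ acquire the common factor $g$, so $\tau^*\mathcal{I}_*$ is principal. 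Invertibility then follows from the regularity of $\M^{\spl,[2h]}_n$ (Theorem \ref{thm splsemistable}).

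By the universal property this yields $f\colon \M^{\spl,[2h]}_n \to \M^{\bl,[2h]}_n$, and the general nonsense above reduces the isomorphism claim to quasi-finiteness over $*$. I would verify this by computing the exceptional divisors on both sides. At the worst point, $\calF_i|_* = \Pi\Lambda_i \otimes k$ has rank $n$, so a priori $\calG|_*$ varies in a $\PP^{n-1}_k$; the conditions imposed by the flat closure cut this fiber down, and lines lying in the rank-$2h$ submodule $\Pi\Lambda_i^\sharp \otimes k$ form a $\PP^{2h-1}_k$ along which one must blow up to match the moduli, identifying $\Exc$ with $\mathrm{Bl}_{\PP^{2h-1}_k}(\PP^{n-1}_k)$. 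On the blow-up side, the exceptional divisor is $\mathrm{Proj}(\mathrm{gr}_{\mathcal{I}_*}\mathcal{O}_{\M^{\loc,[2h]}_n,*})$, whose structure is controlled by the (determinantal) tangent cone at the worst point, known from \cite{Luo}, and yields the same variety. Hence $f$ restricts to an isomorphism on exceptional divisors, establishing (a) and forcing $f$ to be an isomorphism globally.

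Assertion (b) then follows: under $f$, the unique irreducible component $\tilde Z_i$ of the special fiber of $\M^{\spl,[2h]}_n$ dominating $Z_i$ (Theorem \ref{thm splsemistable}(3)) corresponds to the unique component of $\M^{\bl,[2h]}_n$ above $Z_i$, which by definition is the strict transform of $Z_i$. The principal obstacle is the local analysis in Step 1: showing that the scheme-theoretic closure really does make $\tau^*\mathcal{I}_*$ principal requires controlling the explicit equations of $\M^{\spl,[2h]}_n$ in affine charts near $\Exc$. The cases $h=0$ (Krämer) and $h=m-1$ (Yu) give partial templates, but a uniform general-$h$ argument, running through the affine coverings constructed in the paper, is needed to handle all maximal vertex-stabilizer levels at once.
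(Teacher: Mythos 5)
The paper does not actually prove this theorem: its ``proof'' is a citation to [HLS1, Theorem~4.0.1 and Proposition~4.3.2], so there is no internal argument to compare against. Your strategy --- make $\tau^*\mathcal{I}_*$ invertible on the flat closure, invoke the universal property of the blow-up to get $f\colon \M^{\spl,[2h]}_n\to\M^{\bl,[2h]}_n$, then upgrade $f$ to an isomorphism --- is the natural one, and it is consistent in spirit with how the present paper treats the analogous statement for the strata models in \S\ref{BLupsection}, namely by explicit Rees-algebra charts for the blow-up compared against explicit affine charts of the model.

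As written, though, there are two genuine gaps. First, the central claim that every generator of $\tau^*\mathcal{I}_*$ acquires a common factor $g$ cutting out $\NExc$ is asserted rather than proved. In the charts of Proposition~\ref{affinechartU} one has $X=TS^t-\pi I_n$ and $Y=VZ^t-\pi I_n$ (equation~\ref{eq 334}), so the generators of $\tau^*\mathcal{I}_*$ are the entries $t_is_j$, $v_iz_j$ in the special fiber, and whether the ideal they generate becomes principal on $\U_{i_0,j_0}'$ depends on the precise relations among $S,T,V,Z$ recorded in [HLS1, \S3]; this verification is the entire content of the theorem and is exactly the step you defer. Second, your ``general nonsense'' reduction of the isomorphism claim to quasi-finiteness over $*$ implicitly invokes Zariski's main theorem and hence requires normality of the target $\M^{\bl,[2h]}_n$; but the blow-up of a normal scheme at a closed point need not be normal, and normality of $\M^{\bl,[2h]}_n$ is not available until one has computed its charts --- at which point one may as well compare those charts directly with Proposition~\ref{prop splflat} and dispense with the universal-property detour altogether. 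Either carry out the chart-by-chart comparison (in the style of Propositions~\ref{prop Zbl} and~\ref{prop Ybl}) or cite [HLS1] as the paper does; the proposal as it stands establishes neither the invertibility of $\tau^*\mathcal{I}_*$ nor the isomorphism.
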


\begin{proof}
See \cite[Theorem 4.0.1]{HLS1} and \cite[Proposition 4.3.2]{HLS1}.
\end{proof}

\subsection{Strata models}\label{sec. 32}
In this subsection, we will define the naive strata model $\M^{\nspl, [2h]}_n(2t)$ (resp. strata model $\calM^{[2h]}_n(2t)$). It is a closed subscheme in the special fiber of naive splitting model $\overline{\M}^{\nspl,[2h]}_n$ (resp. the special fiber of splitting model $\overline{\M}^{\spl,[2h]}_n$). 
 We first review the definition of the strata local model $\M^{\loc, [2h]}_n(2t)$ from \cite{HLS2}.

\begin{Definition}
{\rm
Let $R$ be a $k$-algebra, and $\calL_{[2h]}$, $\calL_{[2h,2t]}$ be the standard self-dual lattice chains. The strata local model $\M^{\loc, [2h]}_n(2t)$ is the projective scheme over $\Spec k$, representing the functor that sends each $k$-algebra $R$ to the set of subsheaves 
\[
\calF_i \subset  \Lambda_{i,R}, \quad \text{where}~ i\in [2h,2t] 
\]
such that
\begin{itemize}
    \item $\calF_i=\Pi \Lambda_i$ for $i\in [2t]$.
    \item $(\calF_i)_{i\in [2h]}\in \overline{\M}^{\loc, [2h]}_n\otimes R$.
    \item For any $i< j$ with either $i\in [2h], j\in [2t]$ or $i\in [2t], j\in [2h]$, the natural morphism $\Lambda_i\rightarrow \Lambda_j$ maps $\calF_i$ to $\calF_j$.
\end{itemize}
}
\end{Definition}

Here $\overline{\M}^{\loc, [2h]}_n$ is the special fiber of the local model $\M^{\loc, [2h]}_n$. The main theorem for the strata local model is the following:

\begin{Theorem}[\cite{HLS2}, \S 4]\label{StrataLocalModel}
The strata local model  $\M^{\loc, [2h]}_n(2t)$ is reduced, normal, and Cohen-Macaulay. Moreover, 

(1). For $t> h$, the strata local model $\M^{\loc, [2h]}_n(2t)$ has dimension $t+h$.

(2). For $t< h$, excluding the case where $n$ is even and $h=\frac n2$ ($\pi$-modular case), the strata local model $\M^{\loc, [2h]}_n(2t)$ has dimension $n-t-h-1$.

(3). For $n$ is even, $t< h=\frac n2$, the strata local model $\M^{\loc, [n]}_n(2t)$ is smooth and irreducible of dimension $\frac n2-t-1$.
\end{Theorem}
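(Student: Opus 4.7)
The plan is to analyze $\M^{\loc,[2h]}_n(2t)$ via explicit affine charts around the worst point $*\in\overline{\M}^{\loc,[2h]}_n$, together with the homogeneity provided by the parahoric group scheme $\scrG_{[2h,2t]}$. First I would unpack the moduli problem: since $\calF_i=\Pi\Lambda_{i,R}$ is prescribed for $i\in[2t]$ and periodicity together with self-duality determine $\calF_{-i}$ from $\calF_i$, a point of $\M^{\loc,[2h]}_n(2t)$ is governed by the single sheaf $\calF_h\subset\Lambda_{h,R}$ subject to (i) the Kottwitz and wedge conditions of $\overline{\M}^{\loc,[2h]}_n$ and (ii) a linear compatibility with $\Pi\Lambda_{t,R}$: for $t>h$ the image of $\calF_h$ in $\Lambda_{t,R}$ must land in $\Pi\Lambda_{t,R}$; for $t<h$ the image of $\Pi\Lambda_{t,R}$ under $\Lambda_t\hookrightarrow\Lambda_h$ must lie in $\calF_h$.

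Next I would use the standard affine chart of $\overline{\M}^{\loc,[2h]}_n$ at $*$, in which $\calF_h$ is presented as the column span of a $2n\times n$ matrix whose entries are constrained by explicit minor relations coming from the Kottwitz and wedge conditions (as developed in \cite{Luo}). The compatibility (ii) imposes the vanishing of a specific block of this matrix. A direct parameter count then yields the dimensions $t+h$ in case (1), $n-t-h-1$ in case (2), and $n/2-t-1$ in case (3). In the last case the remaining wedge relations become trivial, so the chart is affine space, whence smoothness and irreducibility.

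For cases (1) and (2), reducedness, normality and the Cohen--Macaulay property I would deduce by recognizing the closed subscheme cut out in each chart as a Schubert-type subvariety in a finite Grassmannian product, and then invoking the Frobenius-splitting theorem of Mathieu (or the local-model version due to Pappas--Rapoport). The action of $\scrG_{[2h,2t]}$ preserves the defining conditions and propagates these properties from a neighborhood of $*$ to all of $\M^{\loc,[2h]}_n(2t)$.

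The main obstacle will be to verify that the scheme-theoretic ideal produced by the linear condition (ii) inside the ambient wedge local model is \emph{radical} and defines exactly the desired Schubert closure rather than a thickening of it. The wedge condition is known to create non-flat phenomena in nearby contexts (cf.\ Remark~\ref{rk 3.4} and Definition~\ref{Def. 3.3}), so one must confirm that the extra linear vanishing interacts cleanly with the minor relations. I would attempt this by direct Pl\"ucker-style manipulation in the chart, and fall back if necessary on a $\GG_m$-equivariant degeneration to a monomial ideal whose primary decomposition is explicit.
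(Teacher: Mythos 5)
This statement is not proved in the paper at all: it is quoted verbatim from \cite[\S 4]{HLS2} (note the attribution in the theorem header), and the paper only ever uses its conclusions together with the explicit affine charts of the strata local model recalled later in \S\ref{BLupsection}, namely $\Spec k[X,Y]/(X-X^{ad},\wedge^2(X,Y))$ for $t>h$ and $\Spec k[Z]/(\wedge^2(Z))$ for $t<h$. So there is no internal proof to compare against; your proposal has to be measured against the argument in the cited reference, whose overall shape (affine charts near the worst point, propagation by the $\scrG_{[2h,2t]}$-action, identification of the chart as a determinantal/Schubert-type variety) your sketch does reconstruct correctly in outline.

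That said, there are two genuine gaps. First, you take as ambient the chart of the \emph{wedge} local model cut out by the Kottwitz and wedge conditions, but the definition of $\M^{\loc,[2h]}_n(2t)$ requires $(\calF_i)\in\overline{\M}^{\loc,[2h]}_n\otimes R$, i.e.\ the special fiber of the flat closure, which in general is cut out by strictly more equations than Kottwitz plus wedge (this is exactly why $\M^{[2h],\wedge}_n$ fails to be flat). Imposing your linear condition (ii) inside the wedge chart could therefore produce a thickening or extra components; one must either start from the explicit equations of $\overline{\M}^{\loc,[2h]}_n$ from \cite{Luo}, or prove that after imposing the vertex-lattice conditions the two loci coincide. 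Second, the step you yourself flag as the main obstacle --- that the resulting ideal is radical and equals the ideal of the expected Schubert closure --- is precisely where all the content lies, and ``fall back on a $\GG_m$-degeneration to a monomial ideal'' is not an argument. What actually closes this gap in the reference is that the chart is literally the ideal of $2\times 2$ minors of a $(t-h)\times(t+h)$ matrix with a symmetry constraint on the square block (respectively of an $(n-2h)\times(h-t)$ matrix), and primeness, normality and the Cohen--Macaulay property of such rank-$\le 1$ determinantal rings are classical; your appeal to Frobenius splitting of Schubert varieties is a heavier tool that would also work but only after you have identified the chart with such a variety, which is the step you leave open. Your dimension counts ($t+h$, $n-t-h-1$, $\tfrac n2-t-1$) are consistent with these charts, and the smoothness and irreducibility in case (3) do follow once one checks that for $h=\tfrac n2$ the minor relations are vacuous, but as written the proposal does not yet establish the reduced, normal, Cohen--Macaulay assertions.
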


Now we can define the (naive) strata model. For even integers $2h, 2t$, where $0\leq 2h\neq 2t\leq n$, let $\overline{\M}^{\nspl,[2h]}_n$ be the special fiber of the naive splitting model over $\Spec k$.

\begin{Definition}\label{defstratanspl}
{\rm
Let $R$ be a $k$-algebra. The naive strata model $\M^{\nspl,[2h]}_n(2t)$ is the projective scheme over $\Spec k$, representing the functor that sends each $k$-algebra $R$ to the set of subsheaves 
\[
\begin{array}{l}
   \calF_i \subset  \Lambda_{i,R}, \quad ~\text{where}~ i\in [2h, 2t]  \\
   \calG_j \subset  \calF_i, \quad ~\text{where}~ j\in [2h]
\end{array}
\]
such that
\begin{itemize}
    \item $\calF_i=\Pi \Lambda_{i, R}$ for $i\in [2t]$.
    \item $(\calF_j, \calG_j)_{j\in [2h]}\in \overline{\M}^{\nspl, [2h]}_n\otimes R$.
    \item For any $i_1< i_2$ with either $i_1\in [2h], i_2\in [2t]$ or $i_1\in [2t], i_2\in [2h]$, the natural morphism $\Lambda_{i_1}\rightarrow \Lambda_{i_2}$ maps $\calF_{i_1}$ to $\calF_{i_2}$.
    \item When $t> h$, we have 
    \[
    \calG_j\subset L_\pm^\bot ~\text{for}~ j=\pm h+kn.
    \]
    Here $L_\pm$ is the image of $\Lambda_{-t+kn}\rightarrow \Lambda_{\mp h+kn}$, and $L_\pm^\bot$ is the dual of $L_\pm$ with respect to $\bb~,~ \pp$.
    \item When $t< h$, we have 
    \[
    \begin{array}{c}
    \calG_j\subset L_+^\bot ~\text{for}~ j= h+kn.\\
    (\text{resp.}~ \calG_j\subset L_-^\bot ~\text{for}~ j=(n-h)+kn. )
    \end{array}
    \]
     Here $L_+$ is the image of $\Lambda_{t+kn}\rightarrow \Lambda_{(n-h)+kn}$ (resp. $L_-$ is the image of $\Lambda_{t+kn}\rightarrow \Lambda_{h+kn}$), and $L_\pm^\bot$ is the dual of $L_\pm$ with respect to $( ~ ,~ )$.
\end{itemize}
}
\end{Definition}

The naive strata model $\M^{\nspl,[2h]}_n(2t)$ is a  closed subscheme of $\overline{\M}^{\nspl,[2h]}_n$. By restricting the morphism $\tau$ in (\ref{eq tau}) to the strata model, we get a projective $\scrG_{[2h]}$-equivariant morphism $ \tau: \M^{\nspl,[2h]}_n(2t)\rightarrow \M^{\loc,[2h]}_n(2t)$. We now define the strata model $\calM^{[2h]}_n(2t)$—a closed subscheme of the special fiber of $\M^{\spl,[2h]}_n$—as follows:

\begin{Definition}
Let $R$ be a $k$-algebra. The strata model $\calM^{[2h]}_n(2t)$ is the projective scheme over $\Spec k$, whose $R$-points are 
\[
\calM^{[2h]}_n(2t)(R)=\M^{\spl, [2h]}_n (R)\cap \M^{\nspl,[2h]}_n(2t)(R).
\]
\end{Definition}

The main theorem of this section is as follows.

\begin{Theorem}\label{thm splstrata}
The strata model  $\calM^{[2h]}_n(2t)$ is smooth. Moreover, 

(1). For $t> h$, the strata model $\calM^{[2h]}_n(2t)$ is of dimension $t+h$.

(2). For $t< h$, excluding the case where $n$ is even and $h=\frac n2$ ($\pi$-modular case), the strata model $\calM^{[2h]}_n(2t)$ is of dimension $n-t-h-1$.

(3). For $t_2<h<t_1$, the intersection of strata models $\calM^{[2h]}_n(2t_1)\cap \calM^{[2h]}_n(2t_2)$ is of dimension $t_1-t_2-1$.   
\end{Theorem}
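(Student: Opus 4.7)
The plan is to prove Theorem~\ref{thm splstrata} chart–by–chart on an explicit affine open cover $\bigcup U_{i_0,j_0}$ of the strata model, parallel to the affine cover used to treat the splitting model $\M^{\spl,[2h]}_n$ itself in \cite{HLS1}. The pair $(i_0,j_0)$ indexes trivializations of the filtration $\calF$ and of the rank--one piece $\calG$ respectively: on such a chart $\calF$ is presented as the graph of a matrix $X$ in suitable coordinates coming from the standard basis of $\Lambda_{i,R}$, and $\calG$ as the span of a column vector $v$ with a distinguished pivot entry equal to $1$. The Kottwitz, wedge, duality, and splitting relations defining $\overline{\M}^{\nspl,[2h]}_n$ translate to explicit polynomial equations between the entries of $X$ and $v$.

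On each chart I would proceed in three steps. First, impose the strata conditions from Definition~\ref{defstratanspl}: $\calF_i=\Pi\Lambda_{i,R}$ for $i\in[2t]$, together with the chain compatibilities between levels $[2h]$ and $[2t]$, and the auxiliary containments $\calG\subset L_{\pm}^{\bot}$. The first family forces a distinguished block of entries of $X$ to vanish and determines others, cutting down the chart on $\overline{\M}^{\loc,[2h]}_n$ to an explicit chart on $\M^{\loc,[2h]}_n(2t)$ of the expected dimension by Theorem~\ref{StrataLocalModel}. The containment $\calG\subset L_{\pm}^{\bot}$ cuts out a coordinate subspace in the projective parameters of $\calG$. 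Second, pass from the naive strata model $\M^{\nspl,[2h]}_n(2t)$ to $\calM^{[2h]}_n(2t)$, i.e.\ intersect with $\M^{\spl,[2h]}_n$. Concretely this amounts to saturating out the $\pi_0$-power torsion in the local ring, and in the ambient chart this is realized by cancelling the ``non-flat'' defining equations in the same manner as in the proof of \cite[Theorem~1.3.1]{HLS1}. Third, after this cancellation one is left with a transverse system of equations in affine space, from which both smoothness and the dimension follow: one obtains $t+h$ in the case $t>h$ and $n-t-h-1$ in the case $t<h$, excluding the $\pi$-modular case as in Theorem~\ref{StrataLocalModel}.

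For part (3), I would combine the defining ideals of $\calM^{[2h]}_n(2t_1)$ and $\calM^{[2h]}_n(2t_2)$ on a common chart. Since $t_2<h<t_1$, the two sets of strata conditions together rigidify the entire extended chain $\calF$: it is pinned down to the standard value at levels $2t_1$ and $2t_2$, and the chain compatibilities then determine $\calF$ at level $[2h]$ as well (up to the lattice-chain combinatorics). The remaining freedom lies only in the line $\calG$, constrained by the joint containment $\calG\subset L_+^{\bot}\cap L_-^{\bot}$; a direct rank count of the quotients $L_\pm^{\bot}/\Pi\Lambda$ yields the projective dimension $t_1-t_2-1$. The main technical obstacle is the flat-closure step: a priori $\M^{\nspl,[2h]}_n(2t)$ may fail to be flat, mirroring the non-flatness of $\M^{\nspl,[2h]}_n$ itself, so one must identify the $\pi_0$-torsion ideal on each chart explicitly and check that its quotient is smooth of the predicted dimension. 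This calculation runs in parallel with the semi-stability argument for $\M^{\spl,[2h]}_n$ in \cite{HLS1}, but must carefully track the additional vanishing conditions imposed by the strata; it is also the step whose outcome forces the dichotomy between $t>h$ and $t<h$.
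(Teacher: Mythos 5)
Your proposal follows essentially the same route as the paper: §\ref{affinechart strata} works on the affine charts $\U_{i_0,j_0}$ of \cite{HLS1}, translates the strata conditions of Definition~\ref{defstratanspl} into the vanishing of explicit blocks of $T,S,V,Z$ (via $X=TS^t-\pi I_n$, $Y=VZ^t-\pi I_n$), observes that these force $T_2^tHT_2=0$ (resp.\ $V_2=0$) and hence kill the non-transverse quadratic relations in Propositions~\ref{affinechartU}--\ref{prop splflat}, and reads off smoothness and the dimensions, with the intersection case handled exactly by your joint-containment $\calG\subset L_+^\bot\cap L_-^\bot$ argument. One small correction: since the strata models live in the special fiber over $k$, there is no $\pi_0$-torsion to saturate on them; the passage to $\calM^{[2h]}_n(2t)$ is literally intersection with the already-computed flat charts $\U'_{i_0,j_0}$ of $\M^{\spl,[2h]}_n$ from \cite{HLS1}, which is what the paper does (and, for $t>h$, the naive and flat strata charts in fact coincide).
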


\subsection{Affine charts for the splitting model}\label{AffineCharts3.3}
In this subsection, following the discussion in \cite[\S 3]{Luo} we review the main steps of the proof of Theorem \ref{thm splsemistable}. By \cite[\S 3.1]{Luo}, it suffices to compute an open affine covering of the inverse image of the worst point $*$ under $\tau$, i.e., the $\NExc$ for the naive splitting model, and $\Exc$ for the splitting model. Assume that the index $I=\{h\}$ is not $\pi$-modular.

To simplify the computations below, we fix reordered bases of $\Lambda_{h}$ and $\Lambda_{n-h}$ as in \cite[(4.1.1)]{ZacZhao2} (see also \cite[(3.1.2)]{HLS1}):  
\begin{equation}\label{eq 411}
\Lambda_{h}=\operatorname{span}_{O_{F_0}}\left\{
\begin{array}{l}
e_{n-h+1},\cdots,e_n,\ \pi^{-1}e_1,\cdots,\pi^{-1}e_h,\ e_{h+1},\cdots,e_{n-h},\\
\pi e_{n-h+1},\cdots,\pi e_n,\ e_1,\cdots,e_h,\ \pi e_{h+1},\cdots,\pi e_{n-h}
\end{array}
\right\},
\end{equation}
\begin{equation*}
\Lambda_{n-h}=\operatorname{span}_{O_{F_0}}\left\{
\begin{array}{l}
e_{n-h+1},\cdots,e_n,\ \pi^{-1}e_1,\cdots,\pi^{-1}e_h,\ \pi^{-1}e_{h+1},\cdots,\pi^{-1}e_{n-h},\\
\pi e_{n-h+1},\cdots,\pi e_n,\ e_1,\cdots,e_h,\ e_{h+1},\cdots,e_{n-h}
\end{array}
\right\}.
\end{equation*}

We choose an open affine chart $\U_{i_0, j_0}$ of a point $(\Pi\Lambda_h, \Pi \Lambda_{n-h}, \calG_h, \calG_{n-h})$ in $\NExc$. In particular, this chart contains the points in 
\begin{equation}
\begin{tikzcd}
\Lambda_{h,R} \arrow[r,"\lambda_h"] 
&
\Lambda_{n-h,R} \arrow[r,"\lambda_{n-h}"] 
&
\Pi^{-1} \Lambda_{h,R} 
\\
\calF_h \arrow[u,hook]\arrow{r}
&
\calF_{n-h} \arrow[u,hook]\arrow{r}
&
\Pi^{-1}\calF_{h} \arrow[u,hook]
\\
\calG_{h}\arrow[u, hook]\arrow{r}&
\calG_{-h}\arrow[u, hook]\arrow{r}&
\Pi^{-1} \calG_{h}\arrow[u, hook]
\end{tikzcd},
\end{equation}
where
\begin{equation}\label{eq FG}
\calF_h = \left[\ 
\begin{matrix}
X\\ \hline
I_n  
\end{matrix}\ \right], \quad 
\mathcal{F}_{n-h}=  \left[\ 
\begin{matrix}
Y\\ \hline
I_n  
\end{matrix}\ \right],\quad
\calG_{h}= \left[\ 
\begin{matrix}
\pi T\\ \hline
T
\end{matrix}\ \right], \quad
\calG_{n-h}= \left[\ 
\begin{matrix}
\pi V\\ \hline
V  
\end{matrix}\ \right] .  
\end{equation}
Here $X, Y$ are matrices of size $n\times n$ and the matrices $T, V$ are of size $n\times 1$ with respect to the reordered basis. We break up the matrices $X, Y, T, V$ into blocks as follows:
\begin{equation}
X= \begin{blockarray}{ccc}
\matindex{2h} & \matindex{n-2h}\\
\begin{block}{[cc]c}
X_1  & X_2 & \matindex{2h} \\ 
X_3  & X_4 &\matindex{n-2h} \\ 
\end{block}
   \end{blockarray},\
Y= \begin{blockarray}{ccc}
\matindex{2h} & \matindex{n-2h}\\
\begin{block}{[cc]c}
Y_1  & Y_2 & \matindex{2h} \\ 
Y_3  & Y_4 &\matindex{n-2h} \\ 
\end{block}
   \end{blockarray},\
T= \begin{blockarray}{cc}
\matindex{1} & \\
\begin{block}{[c]c}
T_1  & \matindex{2h} \\ 
T_2  & \matindex{n-2h} \\ 
\end{block}
   \end{blockarray},\
V= \begin{blockarray}{cc}
\matindex{1} & \\
\begin{block}{[c]c}
V_1  & \matindex{2h} \\ 
V_2  & \matindex{n-2h} \\ 
\end{block}
   \end{blockarray}.
\end{equation}
By \cite[\S 3.2.3]{HLS1}, there exist $n\times 1$ matrices $S= \left[\ 
\begin{matrix}
S_1\\ 
S_2 
\end{matrix}\ \right]$ and $Z= \left[\ 
\begin{matrix}
Z_1\\ 
Z_2 
\end{matrix}\ \right]$, with $S_i$ (resp. $Z_i$) of the same size as $T_i$ (resp. $V_i$) for $i=1,2$, such that $X$ (resp. $Y$) can be expressed in terms of $T, S$ (resp. $V, Z$):
\begin{equation}\label{eq 334}
\begin{array}{l}
X=TS^t-\pi I_n,\quad Y=VZ^t-\pi I_n.
\end{array}
\end{equation}	
For the naive splitting model $\M^{\nspl, [2h]}_n$, we have an open covering 
\[
\NExc\subset \cup_{1\leq i_0, j_0\leq n} ~\U_{i_0, j_0},
\]
where a point $(\calF_h, \calF_{n-h}, \calG_h, \calG_{n-h})$ in $\U_{i_0, j_0}$ satisfy $t_{i_0}=1$ for $\calG_h$ (resp. $ v_{j_0}=1$ for $\calG_{n-h}$). Set 
\[
i^\vee:=
\begin{cases}
  2h+1-i,   & \text{if } 1\le i\le 2h,\\
  n+2h+1-i, & \text{if } 2h+1\le i\le n .
\end{cases}
\]
Let $H=H_{n-2h}$ be the unit anti-diagonal matrix of size $n-2h$. By \cite[\S 3]{HLS1}, there are four cases of $\U_{i_0, j_0}$.

\begin{Proposition}\label{affinechartU}
(Case 1) For $1\leq j_0\leq 2h < i_0\leq n$, the affine chart $\U_{i_0, j_0}$ is isomorphic to
\[
\Spec \frac{O_F[V_1, T_2, v_{i_0}, t_{j_0}, z_{i_0^\vee}]}{(t_{i_0}-1, v_{j_0}-1, v_{i_0}t_{j_0}-\pi, z_{i_0^\vee} v_{i_0}(T_2^tHT_2)-2\pi, z_{i_0^\vee}(z_{i_0^\vee} (T_2^tHT_2)-2t_{j_0}))}.
\]
(Case 2) For $1\leq i_0, j_0\leq 2h$, the affine chart $\U_{i_0, j_0}$ is isomorphic to
\[
\Spec \frac{O_F[T_1, T_2, s_{i_0^\vee}, t_{j_0}^{-1}]}{(t_{i_0}-1,  s_{i_0^\vee}(s_{i_0^\vee}(T_2^tHT_2)-2), \pi(s_{i_0^\vee}(T_2^tHT_2)-2) )}.
\]
(Case 3) For $2h+1\leq i_0, j_0\leq n$, the affine chart $\U_{i_0, j_0}$ is isomorphic to
\[
\Spec \frac{O_F[V_1, V_2, z_{j_0^\vee} , v_{i_0}^{-1}]}{(v_{j_0}-1,  z_{j_0^\vee}(V_2^tHV_2)-2\pi )}.
\]
(Case 4) For $1\leq i_0\leq 2h < j_0\leq n$, the affine chart $\U_{i_0, j_0}$ is smooth and contained in the generic fiber.
\end{Proposition}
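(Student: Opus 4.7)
The plan is to translate each defining condition of $\M^{\nspl,[2h]}_n$ (chain, self-duality, Kottwitz, wedge, and splitting) into explicit matrix equations in the entries of the matrices $T,S,V,Z$ appearing in the parameterization (\ref{eq 334}), normalize $t_{i_0}=v_{j_0}=1$ in the chart $\U_{i_0,j_0}$, and then eliminate variables. The four cases correspond exactly to the four possibilities for the block positions $(i_0,j_0)$ relative to the partition $n=2h+(n-2h)$ in (\ref{eq 411}).

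First I would recall why the shape (\ref{eq 334}) is valid near the point $(\Pi\Lambda_h,\Pi\Lambda_{n-h})$: the chain of inclusions $\calG_h\subset\calF_h\subset\Lambda_{h,R}$ together with the wedge condition $\wedge^2(\Pi-\pi\mid\calF_h)=0$ and the splitting conditions $(\Pi-\pi)\calG_h=0$ and $(\Pi+\pi)\calF_h\subset\calG_h$ force $\Pi$ to act as $-\pi$ on a corank-one direct summand of $\calF_h$ and as $+\pi$ on the rank-one subbundle $\calG_h$; after passing to the reordered basis and writing $\calG_h$ as a column this produces the rank-one correction $X=TS^t-\pi I_n$, and the argument for $Y=VZ^t-\pi I_n$ is identical.

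Next I would encode the two remaining global constraints. The self-duality of the pair $(\calF_h,\calF_{n-h})$ under $\bb\,,\,\pp$ translates into a matrix equation whose block decomposition, combined with the anti-diagonal shape of the pairing in the reordered basis (\ref{eq 411}), produces exactly the quadratic relations in $T_2^tHT_2$ and $V_2^tHV_2$ visible in the statement. The chain map $\Lambda_{h,R}\to\Lambda_{n-h,R}$ sending $\calF_h$ to $\calF_{n-h}$ imposes a block-linear relation between $X$ and $Y$; substituting (\ref{eq 334}) yields the coupling between $(T,S)$ and $(V,Z)$, which after the normalization $t_{i_0}=v_{j_0}=1$ becomes the mixed relations such as $v_{i_0}t_{j_0}=\pi$ and $z_{i_0^\vee}v_{i_0}(T_2^tHT_2)=2\pi$ appearing in Case 1.

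The main obstacle will be bookkeeping: one has to check that, after the normalization, the ideal cut out by all these relations is \emph{exactly} the one listed (no relation missed, none redundant). Case 4 is the simplest, because once $i_0\le 2h<j_0$ one of the derived equations takes the form $t_{j_0}\cdot(\text{unit})=\pi$ with $t_{j_0}$ invertible after normalization, so $\pi$ is inverted and $\U_{i_0,j_0}$ sits in the generic fiber. Cases 2 and 3 are symmetric under the duality swap $\calF_h\leftrightarrow\calF_{n-h}$ and are mild because the auxiliary vector ($S$ in Case 2, $Z$ in Case 3) is almost fully determined. Case 1 is the delicate one, since both parameterizations survive and the mixed coupling together with the quadratic $T_2^tHT_2$-relation must be produced simultaneously; here I would expect to use the $\scrG_{[2h]}$-equivariance of the construction to reduce to a canonical representative before carrying out the elimination, then compare the Krull dimension of the resulting affine ring with the known dimension of $\M^{\nspl,[2h]}_n$ (via $\tau$ and Theorem~\ref{thmloc}) to confirm that no further relation has been overlooked.
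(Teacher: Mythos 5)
Your overall strategy --- parameterize the chart by the matrices $T,S,V,Z$ via (\ref{eq 334}), translate the chain, duality, Kottwitz/wedge and splitting conditions into matrix identities, normalize $t_{i_0}=v_{j_0}=1$, and eliminate --- is indeed the right one; it is exactly the computation carried out in \cite[\S 3]{HLS1}, which is all the paper itself does here (the stated proof is a citation to Propositions 3.5.3, 3.6.2, 3.7.3 and \S 3.8 of \emph{loc.\ cit.}). So in spirit you are reconstructing the cited argument rather than proposing a different route. However, two of your proposed shortcuts do not work. First, your completeness check is logically insufficient: comparing the Krull dimension of the resulting affine ring with a ``known'' dimension cannot certify that the ideal is \emph{exactly} the one listed, since a strictly smaller ideal can cut out a scheme of the same dimension (extra components or non-reduced structure in the same dimension, embedded components, etc.); moreover the reference point you propose, Theorem~\ref{thmloc}, controls the flat local model $\M^{\loc,[2h]}_n$, not the non-flat naive splitting model $\M^{\nspl,[2h]}_n$, whose special fiber has excess components --- indeed detecting those excess relations such as $\pi\bigl(s_{i_0^\vee}(T_2^tHT_2)-2\bigr)$ is the entire point of the statement. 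There is no substitute for verifying both inclusions of ideals by direct elimination.

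Second, your claim that Cases 2 and 3 are ``symmetric under the duality swap $\calF_h\leftrightarrow\calF_{n-h}$'' is false, and the displayed answers already show it: Case 2 carries the $\pi$-torsion relation $\pi\bigl(s_{i_0^\vee}(T_2^tHT_2)-2\bigr)$ (so $\U_{i_0,j_0}$ is non-flat there, cf.\ Proposition~\ref{prop splflat}), while Case 3 is generated by the single relation $z_{j_0^\vee}(V_2^tHV_2)-2\pi$ and is already flat. The asymmetry comes from the fact that the transition maps $\lambda_h$ and $\lambda_{n-h}$ act differently on the two lattices (one factors through multiplication by $\Pi$ on a block), so $\calG_h$ and $\calG_{n-h}$ do not play interchangeable roles; deducing Case 3 from Case 2 by a formal swap would produce the wrong ideal. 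A smaller slip of the same kind occurs in Case 4, where you invoke invertibility of $t_{j_0}$ ``after normalization'' although the normalized variables are $t_{i_0}$ and $v_{j_0}$; the correct mechanism is that a derived relation exhibits $\pi$ as a product of units, which is what places the chart in the generic fiber. With these points repaired --- i.e., with the elimination actually carried out case by case as in \cite{HLS1} --- your outline becomes a proof.
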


\begin{proof}
See \cite[Propositions 3.5.3, 3.6.2, 3.7.3]{HLS1} for the proof of Case (1), (2), and (3). Case (4) follows from \cite[\S 3.8]{HLS1} 
\end{proof}

It is easy to see that $\U_{i_0, j_0}$ is not flat in the first two cases. Let $\U_{i_0, j_0}'=\U_{i_0, j_0}\cap \M^{\spl, [2h]}_n$ denote its flat closure, we obtain

\begin{Proposition}\label{prop splflat}
(Case 1) For $1\leq j_0\leq 2h < i_0\leq n$, the affine chart $\U_{i_0, j_0}'$ is isomorphic to
\[
\Spec \frac{O_F[V_1, T_2, v_{i_0}, z_{i_0^\vee}]}{(t_{i_0}-1, v_{j_0}-1, z_{i_0^\vee} v_{i_0}(T_2^tHT_2)-2\pi)}.
\]
$\U_{i_0, j_0}'$ is semi-stable over $O_F$. In the special fiber, it has 2 irreducible components when $n=2m+1, h=m$ (almost $\pi$-modular case), and 3 irreducible components when $h$ is not almost $\pi$-modular case.

(Case 2) For $1\leq i_0, j_0\leq 2h$, the affine chart $\U_{i_0, j_0}'$ is isomorphic to
\[
\Spec \frac{O_F[T_1, T_2, s_{i_0^\vee}, t_{j_0}^{-1}]}{(t_{i_0}-1,  s_{i_0^\vee}(T_2^tHT_2)-2 )},
\]
such that $\U_{i_0, j_0}'$ is smooth.

(Case 3) For $2h+1\leq i_0, j_0\leq n$, the affine chart $\U_{i_0, j_0}'=\U_{i_0, j_0}$ is flat and semi-stable over $O_F$, with 2 irreducible components in the special fiber.

(Case 4) For $1\leq i_0\leq 2h < j_0\leq n$, the affine chart $\U_{i_0, j_0}'=\U_{i_0, j_0}$ is smooth.
\end{Proposition}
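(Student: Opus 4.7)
The plan is to treat the four cases in parallel: in each case, exhibit an explicit candidate ring $R'$ and check (i) that $R'\otimes_{O_F}F$ agrees with the generic fiber of $\U_{i_0,j_0}$, and (ii) that $R'$ is $\pi$-torsion free. Together these identify $R'$ with the flat closure $\U'_{i_0,j_0}$, and the stated geometric properties then follow by inspection of the resulting presentations.

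Cases 3 and 4 are short. In Case 4, $\U_{i_0,j_0}$ lies entirely in the generic fiber by \cite[\S 3.8]{HLS1}, so $\U'_{i_0,j_0}=\U_{i_0,j_0}$ is smooth. In Case 3, the defining polynomial $z_{j_0^\vee}(V_2^tHV_2)-2\pi$ has nonzero reduction modulo $\pi$, hence is a non-zero-divisor in the ambient polynomial ring over $O_F$; the naive chart is already flat, so $\U'_{i_0,j_0}=\U_{i_0,j_0}$. For Case 1, the candidate $R'$ is obtained by eliminating the variable $t_{j_0}$ (which is generically forced by $v_{i_0}t_{j_0}=\pi$) and discarding the cubic relation: substitution shows that the cubic is generically automatic, and the remaining equation $z_{i_0^\vee}v_{i_0}(T_2^tHT_2)-2\pi$ again has nonzero reduction mod $\pi$. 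For Case 2, the naive relation $\pi(s_{i_0^\vee}(T_2^tHT_2)-2)=0$ directly says that $s_{i_0^\vee}(T_2^tHT_2)-2$ is $\pi$-torsion and so vanishes in the flat closure; oddness of the residue characteristic then forces $s_{i_0^\vee}$ to be a unit, which makes the cubic relation $s_{i_0^\vee}(s_{i_0^\vee}(T_2^tHT_2)-2)=0$ automatic.

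The geometric properties follow quickly. Smoothness in Case 2 is immediate from the Jacobian criterion: $\partial/\partial s_{i_0^\vee}$ of the defining equation equals $T_2^tHT_2=2s_{i_0^\vee}^{-1}$, a unit. In Cases 1 and 3 the defining equation has the form $P\cdot q=2\pi$, with $P$ a product of one or two coordinates and $q$ equal to $T_2^tHT_2$ or $V_2^tHV_2$ after the respective substitution. A direct check shows that for $m:=n-2h\geq 2$, the substitution $t_{i_0}=1$ or $v_{j_0}=1$ produces a nonzero linear term in $q$ via the cross pairing prescribed by $H$, so $\{q=0\}$ is a smooth affine hypersurface; \'etale-locally near any zero of $q$ one may use $q$ itself as a coordinate, putting the equation into the normal form $x_1\cdots x_r u=\pi$ with $u$ a unit, which is the standard local model for semi-stability. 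Near points where $q\neq 0$, $q$ is absorbed into a coordinate to give the same normal form with one fewer factor. When $m=1$ (almost $\pi$-modular), $q$ is a global unit and the equation reduces to $P=2\pi$. Setting $\pi=0$ and factoring then yields the claimed irreducible-component counts: three components in Case 1 for $m\geq 2$, two for $m=1$, and two in Case 3.

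The step I expect to be the main obstacle is the elimination argument in Case 1: one must verify carefully that, after inverting $\pi$, the ideal generated by the three naive relations collapses to the single equation $z_{i_0^\vee}v_{i_0}(T_2^tHT_2)-2\pi$, and that the candidate $R'$ is genuinely $\pi$-torsion free, so that conditions (i) and (ii) together pin down the flat closure uniquely. Once these algebraic identities are in place, the remaining geometric claims reduce to coordinate computations in the spirit of those already carried out in \cite{Luo} and \cite{HLS1}.
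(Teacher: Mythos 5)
The paper does not prove this proposition itself: its ``proof'' is a citation of \cite[Theorems 3.5.5, 3.6.3, 3.7.4, Remarks 3.5.6, 3.6.4]{HLS1} for Cases 1--3, plus the observation that the Case 4 chart lies in the generic fiber. Your proposal is therefore a self-contained re-derivation of the cited results rather than a variant of the paper's argument, and in substance it is correct. The mechanism --- exhibit an explicit $\pi$-torsion-free quotient of the naive chart with the same generic fiber, then invoke the characterization of the scheme-theoretic closure of the generic fiber --- is exactly the right one, and the two delicate points are handled properly: in Case 1 the equation $z_{i_0^\vee}v_{i_0}(T_2^tHT_2)=2\pi$ makes $v_{i_0}$ generically invertible, so $t_{j_0}=\pi v_{i_0}^{-1}=\tfrac12 z_{i_0^\vee}(T_2^tHT_2)$ extends over the closure and the cubic relation becomes redundant; in Case 2 the relation $\pi\bigl(s_{i_0^\vee}(T_2^tHT_2)-2\bigr)=0$ forces $s_{i_0^\vee}(T_2^tHT_2)=2$ in any $\pi$-torsion-free quotient, after which smoothness is immediate from the Jacobian in $s_{i_0^\vee}$.

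One justification is inaccurate, although the conclusion survives. You assert that substituting $t_{i_0}=1$ (resp. $v_{j_0}=1$) always produces a nonzero \emph{linear} term in $q=T_2^tHT_2$ (resp. $V_2^tHV_2$). This fails when the normalized index is self-paired under the antidiagonal form, i.e. $i_0=i_0^\vee=(n+2h+1)/2$, which occurs exactly when $n-2h$ is odd: the substitution then contributes the constant term $1$, and $q=1+2\sum_{j}t_jt_{j^\vee}$ has no linear part. The zero locus of $q$ is nevertheless still a smooth, irreducible affine quadric (its differential vanishes only where all remaining variables vanish, a point at which $q=1\neq 0$), so the component count and the \'etale-local normal form $x_1\cdots x_r u=\pi$ still go through; but your argument as written does not cover this subcase and should be supplemented by the differential computation. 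Relatedly, your factorization argument applied literally to Case 3 in the almost $\pi$-modular case $n-2h=1$ gives $V_2^tHV_2=1$ and hence a single smooth component, whereas the statement (following \cite{HLS1}) asserts two components without a caveat; this is an edge-case imprecision of the statement rather than of your logic, but it is worth flagging since you claim the counts follow ``by inspection.''
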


\begin{proof}
See \cite[Theorems 3.5.5, 3.6.3, 3.7.4, Remark 3.5.6, 3.6.4]{HLS1} for the proof of Case (1), (2), and (3). Case (4) follows from the fact that $\calU_{i_0, j_0}$ is contained in the generic fiber.   
\end{proof}

\subsection{Affine charts for strata models}\label{affinechart strata} In this subsection, we will calculate the strata models $\calM^{[2h]}_n(2t)$ for $t>h$ ($\calZ$-strata case) and $t<h$ ($\calY$-strata case). We always assume that $\Lambda\subset C$ is a vertex lattice of type $2t$ with $t\neq h$.

\subsubsection{$\calZ$-strata models}
We begin by computing the strata models $\calM^{[2h]}_n(2t)$ for $t>h$. 
By an unramified extension, we can reduce to the case where the hermitian form is split. 
By Definition \ref{defstratanspl}, when $t>h$, the strata model parameterizes quadruples $(\calF_h, \calF_{-h}, \calG_{h}, \calG_{-h})$ fitting into the diagram:
\begin{equation}
\begin{tikzcd}
\Lambda_{-t,R} \arrow[r,"\lambda_1"] 
&
\Lambda_{-h,R} \arrow[r,"\lambda"] 
&
\Lambda_{h,R} \arrow[r,"\lambda_2"]
&
\Lambda_{t,R}
\\
\Pi\Lambda_{-t,R} \arrow[u,hook]\arrow{r}
&
\calF_{-h} \arrow[u,hook]\arrow{r}
&
\calF_{h} \arrow[u,hook]\arrow{r}
&
\Pi\Lambda_{t,R}\arrow[u,hook]
\\
&
\calG_{-h}\arrow[u, hook]\arrow{r}
&\calG_{h}\arrow[u, hook]
&
\end{tikzcd}.
\end{equation}
Let $(\calF_h, \calF_{-h}, \calG_{h}, \calG_{-h})$ be an $R$-point in $\overline{\M}^{\nspl, [2h]}_n$ with the represented matrices given by (\ref{eq FG}). (For the undefined terms below we refer to \S \ref{AffineCharts3.3}.) Note that $\calF_{-h}\simeq \calF_{n-h}$, $\calG_{-h}\simeq \calG_{n-h}$ with the same represented matrices. To obtain $(\calF_h, \calF_{-h}, \calG_{h}, \calG_{-h})\in \M^{\nspl, [2h]}_n(2t)(R)$, we need to check:
\begin{equation}
\lambda_1(\Pi \Lambda_{-t,R})\subset \calF_{-h},\quad
\lambda_2(\calF_h)\subset \Pi \Lambda_{t,R},\quad
\calG_{h}\subset L_+^\bot, \quad
\calG_{-h}\subset L_-^\bot, 
\end{equation}
where $L_\pm$ is the image of $\Lambda_{-t}\rightarrow \Lambda_{\mp h}$. Note that the reordered basis of $\Lambda_{\pm t}$, $\Lambda_{\pm h}$ is the same as \cite[(4.1.1)]{ZacZhao2}. With respect to this reordered basis, we have:
\small{\begin{equation}\label{eq trans}
\lambda_1=\left[\ \begin{array}{ll|ll}
0 & I_{n-t+h} &0 & 0 \\ 
0_{t-h}& 0 &0 &0\\ \hline
0&0_{n-t+h}&0&I_{n-t+h}\\
I_{t-h}&0&0_{t-h}&0
\end{array}\ \right],\quad	
\lambda_2=\left[\ \begin{array}{ll|ll}
0 & I_{t-h} &0 & 0 \\ 
A& 0 &0 &0\\ \hline
0&0_{t-h}&0&I_{t-h}\\
B&0&A&0
\end{array}\ \right],
\end{equation}}
where
\[
A=\left[\ \begin{matrix}
I_{2h}  &0 & 0 \\ 
 0 &0_{t-h} &0\\
0&0&I_{n-2t}
\end{matrix}\ \right],\quad
B=\left[\ \begin{matrix}
0_{2h}  &0 & 0 \\ 
 0 &I_{t-h} &0\\
0&0&0_{n-2t}
\end{matrix}\ \right].
\]
Now $\lambda_1(\Pi \Lambda_{-t,R})\subset \calF_{-h}$ implies that
\begin{equation}\label{eq Y}
Y_1=0,\quad Y_3=0,\quad Y_2=\begin{blockarray}{cccc}
\matindex{t-h} &\matindex{n-2t}&\matindex{t-h}&\\
\begin{block}{[ccc]c}
  0  &0 & * & \matindex{h} \\ 
 0 &0&*& \matindex{h} \\
\end{block}
   \end{blockarray}
,\quad
Y_4=\begin{blockarray}{cccc}
\matindex{t-h} &\matindex{n-2t}&\matindex{t-h}&\\
\begin{block}{[ccc]c}
  0  &0 & * & \matindex{t-h} \\ 
 0 &0&*& \matindex{n-2t} \\
 0  &0 & * & \matindex{t-h} \\ 
\end{block}
   \end{blockarray}.
\end{equation}
The coordinates of $V_1, V_2, Z_1, Z_2$ can be further refined as follows:
\begin{equation}\label{eq VZ}
V_1= \begin{blockarray}{cc}
\matindex{1} & \\
\begin{block}{[c]c}
V_{1,1}  & \matindex{h} \\ 
V_{1,2}  & \matindex{h} \\ 
\end{block}
   \end{blockarray},
V_2= \begin{blockarray}{cc}
\matindex{1} & \\
\begin{block}{[c]c}
V_{2,1}  & \matindex{t-h} \\ 
V_{2,2}  & \matindex{n-2t} \\ 
V_{2,3}  & \matindex{t-h} \\ 
\end{block}
   \end{blockarray},
Z_1= \begin{blockarray}{cc}
\matindex{1} & \\
\begin{block}{[c]c}
Z_{1,1}  & \matindex{h} \\ 
Z_{1,2}  & \matindex{h} \\ 
\end{block}
   \end{blockarray},
Z_2= \begin{blockarray}{cc}
\matindex{1} & \\
\begin{block}{[c]c}
Z_{2,1}  & \matindex{t-h} \\ 
Z_{2,2}  & \matindex{n-2t} \\ 
Z_{2,3}  & \matindex{t-h} \\ 
\end{block}
   \end{blockarray}.
\end{equation}
Thus, we obtain 
\begin{equation}\label{eq 349}
\left[\ 
\begin{matrix}
V_1\\ 
V_2
\end{matrix}\ \right]\cdot 
\left[\ 
\begin{matrix}
Z_{1,1}^t, Z_{1,2}^t, Z_{2,1}^t, Z_{2,2}^t\\ 
\end{matrix}\ \right]=0    
\end{equation}
by combining equations in (\ref{eq 334}), (\ref{eq Y}). Since there exists a unit element in $V$, equation (\ref{eq 349}) is equivalent to 
\begin{equation}\label{eq 3410}
Z_1=0,\quad  Z_{2,1}=0,\quad Z_{2,2}=0.
\end{equation}
Similarly, condition $\lambda_2(\calF_h)\subset \Pi \Lambda_{t,R}$ gives the same equation as in (\ref{eq 3410}) by $\bb \calF_h, \calF_{-h}\pp=0$. Now consider $\calG_{\pm h}\subset L_{\pm}^\bot$, where where $L_\pm$ is the image of $\Lambda_{-t}\rightarrow \Lambda_{\mp h}$, i.e.,
\quash{
\begin{equation}\label{eq X}
X_1=0,\quad X_2=0,\quad X_3=\left[\ \begin{matrix}
* & * \\ 
 0 &0\\
 0& 0
\end{matrix}\ \right],\quad
X_4=\left[\ \begin{matrix}
*  &*& * \\ 
 0 &0 &0\\
0&0&0
\end{matrix}\ \right]. 
\end{equation}}
\begin{equation}
\begin{array}{c}
L_+=\text{span}_{R}\{e_1,\cdots,e_{n-t}, \pi e_{1},\cdots, \pi e_n, \pi_0 e_{n-h+1}, \cdots, \pi_0 e_n\}\subset \Lambda_{-h},  \\
L_-=\text{span}_{R}\{e_1,\cdots,e_{n-t}, \pi e_{h+1},\cdots, \pi e_n\}\subset \Lambda_h, 
\end{array}
\end{equation}
of rank $2n-(t-h)$ and $2n-(h+t)$. The dual lattices of $L_\pm$ with respect to $\bb ~ , ~\pp$ are
\begin{equation}
\begin{array}{c}
L_+^\bot=\text{span}_{R}\{\pi e_{h+1},\cdots,\pi e_{t}, \} \subset \Lambda_{h}    \\
L_-^\bot=\text{span}_{R}\{\pi e_1,\cdots,\pi e_{t}, p e_{n-h+1},\cdots, p e_n\}\subset \Lambda_{-h},
\end{array}
\end{equation}
of rank $t-h$ and $t+h$. We refine the coordinates of $T_1, T_2, S_1, S_2$ similar to (\ref{eq VZ}):
\begin{equation}
T_1= \begin{blockarray}{cc}
\matindex{1} & \\
\begin{block}{[c]c}
T_{1,1}  & \matindex{h} \\ 
T_{1,2}  & \matindex{h} \\ 
\end{block}
   \end{blockarray},
T_2= \begin{blockarray}{cc}
\matindex{1} & \\
\begin{block}{[c]c}
T_{2,1}  & \matindex{t-h} \\ 
T_{2,2}  & \matindex{n-2t} \\ 
T_{2,3}  & \matindex{t-h} \\ 
\end{block}
   \end{blockarray},
S_1= \begin{blockarray}{cc}
\matindex{1} & \\
\begin{block}{[c]c}
S_{1,1}  & \matindex{h} \\ 
S_{1,2}  & \matindex{h} \\ 
\end{block}
   \end{blockarray},
S_2= \begin{blockarray}{cc}
\matindex{1} & \\
\begin{block}{[c]c}
S_{2,1}  & \matindex{t-h} \\ 
S_{2,2}  & \matindex{n-2t} \\ 
S_{2,3}  & \matindex{t-h} \\ 
\end{block}
   \end{blockarray}.
\end{equation}
Thus, the condition $\calG_{\pm h}\subset L_{\pm}^\bot$ translate to 
\begin{equation}
\left[\ 
\begin{matrix}
0_n\\ \hline
T_{1,1}\\
T_{1,2}\\
T_{2,1}\\
T_{2,2}\\
T_{2,3}\\
\end{matrix}\ \right]\subset
L_+^\bot=\left[\begin{matrix}
0_n\\ \hline
0 \\
0 \\
I_{t-h}\\
0 \\
0 \\
\end{matrix}\ \right],\quad
\left[\ 
\begin{matrix}
0_n\\ \hline
V_{1,1}\\
V_{1,2}\\
V_{2,1}\\
V_{2,2}\\
V_{2,3}\\
\end{matrix}\ \right]\subset
L_-^\bot=\left[\begin{matrix}
&0_n&\\ \hline
I_h &&\\
&I_h&\\
&& I_{t-h}\\
0&0&0\\
0&0&0\\
\end{matrix}\ \right].
\end{equation}
Therefore, we get 
\[
\begin{array}{cccc}
    T_{1,1}=0,& T_{1,2}=0,& T_{2,2}=0,& T_{2,3}=0   \\
   &  V_{2,2}=0,& V_{2,3}=0.& 
\end{array}
\]
Note that the invertible element in $T$ can only occur in $T_{2,1}$, i.e., $2h+1\leq i_0\leq t+h$. We only need to consider the affine charts $\calU_{i_0, j_0}$ in case (1), (3) of Proposition \ref{affinechartU}.

\begin{Proposition}\label{prop X-strata}
 When $1\leq j_0\leq 2h < i_0\leq t+h$, or $2h+1\leq i_0, j_0\leq t+h$, the open affine chart $U_{i_0, j_0}=\calU_{i_0, j_0}\cap \M^{\nspl, [2h]}_n(2t)$ in the naive strata model  $\M^{\nspl, [2h]}_n(2t)$ $(t> h)$ is isomorphic to the open affine chart $U_{i_0, j_0}'=\calU_{i_0, j_0}\cap \calM^{ [2h]}_n(2t)$ in the strata model  $\calM^{[2h]}_n(2t)$. Both are smooth and isomorphic to $\mathbb{A}^{t+h}_k$.
\end{Proposition}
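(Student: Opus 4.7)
The plan is to substitute the strata vanishing conditions derived in \S \ref{sec. 32} into the explicit presentations of Propositions \ref{affinechartU} and \ref{prop splflat}, show that the nontrivial defining relations collapse in the special fiber, and read off the resulting polynomial-ring presentation.

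The first step is to pin down the admissible indices. Recall the strata vanishings $T_1=0$, $T_{2,2}=T_{2,3}=0$, $V_{2,2}=V_{2,3}=0$ and $Z_1=Z_{2,1}=Z_{2,2}=0$ that were extracted from the diagram $\Lambda_{-t}\to\Lambda_{\mp h}\to\Lambda_t$ together with $\calG_{\pm h}\subset L_{\pm}^{\bot}$. Since the chart normalization requires $t_{i_0}=1$ and $t_{i_0}$ must lie in $T_{2,1}$, one is forced into the range $2h+1\le i_0\le t+h$; in Case 3 the same argument applied to $v_{j_0}$ gives $2h+1\le j_0\le t+h$. A direct computation on $i_0^{\vee}$ (resp.\ $j_0^{\vee}$) shows that $z_{i_0^{\vee}}$ (Case 1) and $z_{j_0^{\vee}}$ (Case 3) land in the $Z_{2,3}$-block, which is untouched by the strata, so these remain as free variables. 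Any $(i_0,j_0)$ outside the indicated ranges produces empty intersection with $\M^{\nspl,[2h]}_n(2t)$.

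The second and key step is the collapse of the quadratic relations. Since $H=H_{n-2h}$ is the unit anti-diagonal, it couples the outer blocks $T_{2,1}$ and $T_{2,3}$ and leaves the middle block $T_{2,2}$ paired with itself; the strata vanishings $T_{2,2}=T_{2,3}=0$ therefore force
\[
T_2^{t}HT_2=0,\qquad V_2^{t}HV_2=0.
\]
Combined with $t_{j_0}\in T_1$ (hence $t_{j_0}=0$) in Case 1 and the reduction $\pi=0$ in the special fiber, every defining relation of $\U_{i_0,j_0}$ in Proposition \ref{affinechartU} other than the normalizations $t_{i_0}-1$ and $v_{j_0}-1$ becomes identically zero. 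The same collapse applies to the extra flatness relations cutting out $\U'_{i_0,j_0}$ inside $\U_{i_0,j_0}$ in Proposition \ref{prop splflat}, yielding the identification $U_{i_0,j_0}=U'_{i_0,j_0}$.

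Finally, I would read off the surviving free coordinates. In Case 1 these are the entries of $(V_1,T_{2,1},v_{i_0},z_{i_0^{\vee}})$ modulo $v_{j_0}=1$ and $t_{i_0}=1$, giving a polynomial ring in $(2h-1)+(t-h-1)+1+1=t+h$ variables; Case 3 is parallel with coordinates $(V_1,V_{2,1},z_{j_0^{\vee}})$ and the same count. Smoothness, the isomorphism with $\mathbb{A}^{t+h}_k$, and the dimension formula then follow at once. The main technical obstacle is the combinatorial bookkeeping: one must carefully track, under the reordered basis \eqref{eq 411}, which block of $T$, $V$, $Z$ each chart variable sits in and verify that the hermitian pairings really do vanish on the strata; once this is in place, the proposition reduces to direct substitution.
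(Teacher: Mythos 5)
Your proposal is correct and follows essentially the same route as the paper: both substitute the strata vanishing conditions ($T_{1}=0$, $T_{2,2}=T_{2,3}=0$, $V_{2,2}=V_{2,3}=0$) into the explicit chart presentations of Propositions \ref{affinechartU} and \ref{prop splflat}, observe that $T_2^tHT_2=0$, $t_{j_0}=0$ (resp. $V_2^tHV_2=0$) kill all quadratic relations in the special fiber, and count the surviving free coordinates to get $\mathbb{A}^{t+h}_k$. Your extra bookkeeping (locating $z_{i_0^\vee}$ in the $Z_{2,3}$-block and explaining why the anti-diagonal pairing forces $T_2^tHT_2=0$) is a correct elaboration of steps the paper leaves implicit.
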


\begin{proof}
For $1\leq j_0\leq 2h < i_0\leq t+h$, since $T_{2,2}=0$ and $T_{2,3}=0$, we have $T_2^tHT_2=0$. Note that $t_{j_0}\in T_{1,1}$ or $T_{1,2}$ by $1\leq j_0\leq 2h$. Thus we obtain $t_{j_0}=0$. By Proposition \ref{affinechartU} (1), the open affine chart $U_{i_0, j_0}=\calU_{i_0, j_0}\cap \M^{\nspl, [2h]}_n(2t)$ is isomorphic to
\[
\Spec \frac{k[V_{1,1}, V_{1,2}, T_{2,1}, v_{i_0}, z_{i_0^\vee}]}{(t_{i_0}-1, v_{j_0}-1)}\simeq \mathbb{A}^{t+h}_k.
\]
Similarly, it is easy to see that the open affine chart $U_{i_0, j_0}'=\calU_{i_0, j_0}\cap \calM^{ [2h]}_n(2t)$ is also isomorphic to $\mathbb{A}^{t+h}_k$ by Proposition \ref{prop splflat} (1).

For $2h+1\leq i_0, j_0\leq t+h$, we get $V_2^tHV_2=0$ by $V_{2,2}=0, V_{2,3}=0$. This gives $U_{i_0, j_0}=U'_{i_0, j_0}\simeq \mathbb{A}^{t+h}_k$ by Propositions \ref{affinechartU}, \ref{prop splflat} (3).
\end{proof}

\subsubsection{$\calY$-strata models}
For the $\calY$-strata models, let $\Lambda\subset C$ be a vertex lattice of type $2t$ with $t< h$. (Recall from Remark \ref{rk 3.4} that we exclude the $\pi$-modular case, i.e. even $n$ and $h=\frac n2$.)  
When $t<h$, the strata model parameterizes quadruples $(\calF_h, \calF_{n-h}, \calG_{h}, \calG_{n-h})$ fitting into the diagram:
\begin{equation}
\begin{tikzcd}
\Lambda_{t,R} \arrow[r,"\lambda_2"] 
&
\Lambda_{h,R} \arrow[r,"\lambda^\vee"] 
&
\Lambda_{n-h,R} \arrow[r,"\lambda_1"]
&
\Lambda_{n-t,R}
\\
\Pi\Lambda_{t,R} \arrow[u,hook]\arrow{r}
&
\calF_{h} \arrow[u,hook]\arrow{r}
&
\calF_{n-h} \arrow[u,hook]\arrow{r}
&
\Pi\Lambda_{n-t,R}\arrow[u,hook]
\\
&\calG_{h}\arrow[u, hook]\arrow{r}&\calG_{n-h}\arrow[u, hook]&
\end{tikzcd}.
\end{equation}
We express $\calF_h,\ \calF_{-h},\ \calG_h,\ \calG_{-h}$ using the same block decomposition as in (\ref{eq FG}). To obtain $(\calF_h, \calF_{n-h}, \calG_{h}, \calG_{n-h})\in \M^{\nspl, [2h]}_n(2t)(R)$, we need to check:
\begin{equation}
\lambda_2(\Pi \Lambda_{t,R})\subset \calF_{h},\quad
\lambda_1(\calF_{n-h})\subset \Pi \Lambda_{n-t,R},\quad
\calG_h\subset L_+^\bot,\quad
\calG_{n-h}\subset L_-^\bot.
\end{equation}
Here $L_+$ ( resp. $L_-$) is the image of $\Lambda_{t}\rightarrow \Lambda_{n-h}$ (resp. $\Lambda_{t}\rightarrow \Lambda_h$), and $L_\pm^\bot$ is the dual lattice with respect to $( ~ , ~)$. Similar to (\ref{eq trans}), the transition maps can be expressed as:
\begin{equation}
\lambda_1=\left[\ \begin{array}{ll|ll}
0 & I_{n-h+t} &0 & 0 \\ 
0_{h-t}& 0 &0 &0\\ \hline
0&0_{n-h+t}&0&I_{n-h+t}\\
I_{h-t}&0&0_{h-t}&0
\end{array}\ \right],\quad	
\lambda_2=\left[\ \begin{array}{ll|ll}
0 & I_{h-t} &0 & 0 \\ 
A& 0 &0 &0\\ \hline
0&0_{h-t}&0&I_{h-t}\\
B&0&A&0
\end{array}\ \right]
\end{equation}
where
\[
A=\left[\ \begin{matrix}
I_{2t}  &0 & 0 \\ 
 0 &0_{h-t} &0\\
0&0&I_{n-2h}
\end{matrix}\ \right],\quad
B=\left[\ \begin{matrix}
0_{2t}  &0 & 0 \\ 
 0 &I_{h-t} &0\\
0&0&0_{n-2h}
\end{matrix}\ \right].
\]
Condition $\lambda_1(\calF_{n-h})\subset \Pi \Lambda_{n-t,R}$ is equivalent to
\begin{equation}\label{eq Y2}
Y_1=\begin{blockarray}{ccccc}
\matindex{h-t} &\matindex{t}&\matindex{t}& \matindex{h-t}&\\
\begin{block}{[cccc]c}
  *  &* & * & *& \matindex{h-t} \\ 
 0 &0&0&0& \matindex{t} \\
 0  &0 & 0 & 0& \matindex{t} \\ 
 0 &0&0&0& \matindex{h-t} \\
\end{block}
   \end{blockarray},\quad
Y_2=\begin{blockarray}{cc}
\matindex{n-2h} &\\
\begin{block}{[c]c}
  * & \matindex{h-t} \\ 
 0 & \matindex{t} \\
 0 & \matindex{t} \\ 
 0 & \matindex{h-t} \\ 
\end{block}
   \end{blockarray},\quad
Y_3=0,\quad Y_4=0.
\end{equation}
Note that $X_1=\left[\ 
\begin{matrix}
A & B \\ 
C& D
\end{matrix}\ \right]$ as in \cite[(3.1.3)]{HLS1}. In the special fiber, we have relations
\[
D+HA^tH=0,\quad B=HB^tH,\quad C=HC^tH
\]
by \cite[Lemma 3.3.1]{HLS1}. Thus, we have $X_1=-JX_1^tJ$, where 
$J=\left[\ 
\begin{matrix}
&H \\ 
-H& 
\end{matrix}\ \right]$. By $(\calF_{n-h}, \calF_h)=0$, we also obtain $Y_1=-JX_1^tJ$ and so $Y_1=JY_1^tJ$. This implies $Y_1=(y_{i,j})_{1\leq i,j\leq 2h}$ satisfying $y_{i,j}=\pm y_{2h+1-j, 2h+1-i}$ for $1\leq i, j\leq 2h$. Therefore, the matrix $Y_1$ can be rewritten as 
\[
Y_1=\begin{blockarray}{ccccc}
\matindex{h-t} &\matindex{t}&\matindex{t}& \matindex{h-t}&\\
\begin{block}{[cccc]c}
  0  &0 & 0 & *& \matindex{h-t} \\ 
 0 &0&0&0& \matindex{t} \\
 0  &0 & 0 & 0& \matindex{t} \\ 
 0 &0&0&0& \matindex{h-t} \\
\end{block}
   \end{blockarray}.
\]
The coordinates of $V_1, T_1, S_1$ and $ Z_1$ can be further refined as follows:
\begin{equation}
V_1= \left[\begin{matrix}
V'_{1,1}   \\ 
V'_{1,2}   \\ 
V'_{1,3}  \\ 
V'_{1,4}    
\end{matrix}\right],\quad
T_1= \left[\begin{matrix}
T'_{1,1}   \\ 
T'_{1,2}   \\ 
T'_{1,3}  \\ 
T'_{1,4}    
\end{matrix}\right],\quad
S_1= \left[\begin{matrix}
S'_{1,1}   \\ 
S'_{1,2}   \\ 
S'_{1,3}  \\ 
S'_{1,4}    
\end{matrix}\right],\quad
Z_1= \left[\begin{matrix}
Z'_{1,1}   \\ 
Z'_{1,2}   \\ 
Z'_{1,3}  \\ 
Z'_{1,4}    
\end{matrix}\right],
\end{equation}
where $V_{1,i}'$ (resp. $T_{1,i}'$, $S_{1,i}'$, $Z_{1,i}'$) is of size $(h-t)\times 1$ for $i=1,4$, and $V_{1,j}'$ (resp. $T_{1,j}'$, $S_{1,j}'$, $Z_{1,j}'$) is of size $t\times 1$ for $j=2,3$.
From the first three columns of $Y_1, Y_3$, we get
\begin{equation}\label{eq 3322}
\left[\ 
\begin{matrix}
V_1\\ 
V_2
\end{matrix}\ \right]\cdot 
\left[\ 
\begin{matrix}
(Z_{1,1}')^{t} ~ (Z_{1,2}')^{t} ~(Z_{1,3}')^{t}\\ 
\end{matrix}\ \right]=0.  
\end{equation}
Since there exists a unit element in $V$, equation (\ref{eq 3322}) is equivalent to \
\begin{equation}\label{eq Z'}
Z_{1,1}'=0,\quad Z_{1,2}'=Z_{1,3}'=0.   
\end{equation} 
Similarly, condition $\lambda_2(\Pi \Lambda_{t,R})\subset \calF_{h}$ gives us the same relations as in (\ref{eq Z'}), so it suffices to verify $\calG_{h}\subset L_+^\bot, ~\calG_{n-h}\subset L_-^\bot$. Here $L_{+}$ (resp. $L_-$) is the image of $\Lambda_{t}\rightarrow \Lambda_{n-h}$ (resp. $\Lambda_{t}\rightarrow \Lambda_{h}$). More precisely,
\begin{equation}
\begin{array}{c}
     L_+=\text{span}_{R}\{\pi^{-1} e_1,\cdots,\pi^{-1} e_{t}, e_{1},\cdots, e_n, \pi e_{n-h+1}, \cdots, \pi e_n\}\subset \Lambda_{n-h}  \\
    (\text{resp.}~ L_-=\text{span}_{R}\{\pi^{-1} e_1,\cdots,\pi^{-1} e_{t}, e_{1},\cdots, e_n, \pi e_{h+1}, \cdots, \pi e_n\}\subset \Lambda_{h})
\end{array}
\end{equation}
of rank $n+t+h$ (resp. rank $2n-(h-t)$). The dual of $L_\pm$ with respect to $( ~ , ~)$ is
\begin{equation}
\begin{array}{c}
L_+^\bot=\text{span}_{R}\{\pi e_{h+1}, \cdots, \pi e_{n-t}\}\subset \Lambda_{h},   \\
(\text{resp.}~ L_-^\bot=\text{span}_{R}\{\pi e_{n-h+1}, \cdots, \pi e_{n-t}\}\subset \Lambda_{n-h}).
\end{array}
\end{equation}
of rank $n-t-h$ (resp. $h-t$). With respect to the reordered basis, the condition $\calG_{h}\subset L_+^\bot, ~\calG_{n-h}\subset L_-^\bot$ translates to
\begin{equation}\label{relation-Y}
\begin{array}{c}
T_{1,2}'=T_{1,3}'=0,\quad T_{1,4}'=0;  \\
V_{1,2}'=V_{1,3}'=0,\quad V_{1,4}'=0,\quad V_2=0.
\end{array}
\end{equation}
The invertible element $t_{i_0}$ occurs in $1\leq i_0\leq h-t$ or $2h+1\leq i_0\leq n$, and $v_{j_0}\in V_{1,1}'$ with $1\leq j_0\leq h-t$. We need to consider the affine charts $\U_{i_0, j_0}$ in case (1), (2) of Proposition \ref{affinechartU}. Similar to Proposition \ref{prop X-strata}, we set the open affine chart $U_{i_0, j_0}:=\calU_{i_0, j_0}\cap \M^{\nspl, [2h]}_n(2t)$ and $U_{i_0, j_0}':=\calU_{i_0, j_0}\cap \calM^{ [2h]}_n(2t)$ for $t<h$.

\begin{Proposition}\label{prop Y-strata}
(1) When $2h+1\leq i_0\leq n$, $1\leq j_0\leq h-t$, the affine chart $U_{i_0, j_0}\subset \M^{\nspl, [2h]}_n(2t)$ is isomorphic to
\[
\Spec \frac{k[V_{1,1}', T_2, t_{j_0}, z_{i_0^\vee}]}{(t_{i_0}-1, v_{j_0}-1, z_{i_0^\vee}(z_{i_0^\vee} (T_2^tHT_2)-2t_{j_0}))}.
\]
It has 2 irreducible components of equidimension $n-h-t-1$. The affine chart $U_{i_0, j_0}'\subset \calM^{ [2h]}_n(2t)$ is smooth of dimension $n-h-t-1$ and isomorphic to
\[
\Spec \frac{k[V_{1,1}', T_2, z_{i_0^\vee}]}{(t_{i_0}-1, v_{j_0}-1)}\simeq \mathbb{A}_k^{n-h-t-1}.
\]
(2) When $1\leq i_0, j_0 \leq h-t$,  the affine chart $U_{i_0, j_0}\subset \M^{\nspl, [2h]}_n(2t)$ is isomorphic to
\[
\Spec \frac{k[T_{1,1}', T_2, s_{i_0^\vee}, t_{j_0}^{-1}]}{(t_{i_0}-1,  s_{i_0^\vee}(s_{i_0^\vee}(T_2^tHT_2)-2) )}.
\]
It has 2 irreducible components of equidimension $n-h-t-1$. The affine chart $U_{i_0, j_0}'\subset \calM^{ [2h]}_n(2t)$ is smooth of dimension $n-h-t-1$ and isomorphic to
\[
\Spec \frac{k[T_{1,1}', T_2, s_{i_0^\vee}, t_{j_0}^{-1}]}{(t_{i_0}-1,  s_{i_0^\vee}(T_2^tHT_2)-2 )}.
\]
\end{Proposition}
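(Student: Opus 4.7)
The plan is to specialize Propositions \ref{affinechartU} and \ref{prop splflat} to the closed subscheme cut out by the linear conditions derived above for the $\calY$-strata, namely $T_{1,2}'=T_{1,3}'=T_{1,4}'=V_{1,2}'=V_{1,3}'=V_{1,4}'=V_2=0$ together with $Z_{1,1}'=Z_{1,2}'=Z_{1,3}'=0$ (and the corresponding $S$-relations inherited from \eqref{eq 334}). Because the invertible entry $t_{i_0}$ can only lie in $T_{1,1}'$ or in $T_2$ and $v_{j_0}$ must lie in $V_{1,1}'$, the only nonempty strata charts arise from cases (1) and (2) of the affine cover in Proposition \ref{affinechartU}; this is exactly the dichotomy in the statement.

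First I will treat case (1), where $2h+1 \le i_0 \le n$ and $1 \le j_0 \le h-t$. Substituting $v_{i_0}=0$ (forced by $V_2=0$) into the naive ring of Proposition \ref{affinechartU}(1), the relations $v_{i_0}t_{j_0}-\pi$ and $z_{i_0^\vee}v_{i_0}(T_2^tHT_2)-2\pi$ both collapse to $\pi=0$, which is automatic over $k$; only $z_{i_0^\vee}(z_{i_0^\vee}(T_2^tHT_2)-2t_{j_0})=0$ survives, and its factorization produces the two claimed irreducible components, each of dimension $n-h-t-1$ by a direct count of the surviving coordinates. For the flat closure of Proposition \ref{prop splflat}(1), the equation $z_{i_0^\vee}v_{i_0}(T_2^tHT_2)=2\pi$ becomes trivial once $v_{i_0}=0$ and $\pi=0$, so $U'_{i_0,j_0}$ is the affine space on the remaining coordinates $V_{1,1}', T_2, z_{i_0^\vee}$ modulo the normalizations $t_{i_0}=1$ and $v_{j_0}=1$, giving $\mathbb{A}_k^{n-h-t-1}$.

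Then I will treat case (2), where $1 \le i_0,j_0 \le h-t$. Here only $T_{1,1}'$ survives from $T_1$, and the ring of Proposition \ref{affinechartU}(2) specializes directly; its factor $\pi(s_{i_0^\vee}(T_2^tHT_2)-2)$ vanishes over $k$, leaving $s_{i_0^\vee}(s_{i_0^\vee}(T_2^tHT_2)-2)=0$, whose two factors give the two irreducible components of the stated dimension. For the flat closure only $s_{i_0^\vee}(T_2^tHT_2)=2$ remains; since $2\in k^\times$, this forces $T_2^tHT_2$ to be a unit, whence $s_{i_0^\vee}$ is uniquely determined by $T_2$ on the chart, exhibiting $U'_{i_0,j_0}$ as a smooth hypersurface of dimension $n-h-t-1$ (the Jacobian criterion is immediate: $\partial_{s_{i_0^\vee}}(s_{i_0^\vee}(T_2^tHT_2)-2) = T_2^tHT_2$ is a unit on the vanishing locus). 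I do not expect a substantive obstacle here: the real work is the bookkeeping of which matrix entries are killed by the strata conditions and which are fixed by the chart normalizations $t_{i_0}=1$, $v_{j_0}=1$, following exactly the template of Proposition \ref{prop X-strata}.
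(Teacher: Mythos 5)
Your proposal is correct and follows essentially the same route as the paper: the paper's own proof is just the observation that $v_{i_0}=0$ (being an entry of $V_2$) in case (1), combined with the specialization of Propositions \ref{affinechartU} and \ref{prop splflat} to the locus cut out by (\ref{relation-Y}); you have simply spelled out the substitutions and the dimension counts in more detail. No gaps.
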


\begin{proof}
When $2h+1\leq i_0\leq n$, $1\leq j_0\leq h-t$, note that $v_{i_0}=0\in V_2$. Then we get (1) from Propositions \ref{affinechartU} and \ref{prop splflat} (Case 1). (2) follows directly from Proposition \ref{affinechartU}, \ref{prop splflat} (Case 2) and relation (\ref{relation-Y}). 
\end{proof}

\subsubsection{Intersection of $\calZ$-strata and $\calY$-strata models}\label{intersection}
Let $\Lambda_1\subset F^n$ be a vertex lattice of type $2t_1$ with $t_1> h$, and $\Lambda_2^\sharp\subset F^n$ be a vertex lattice of type $2t_2$ with $t_2< h$. Consider the intersection $\M^{\nspl, [2h]}_n (2t_1)\cap \M^{\nspl, [2h]}_n (2t_2)$ and $\calM^{[2h]}_n (2t_1)\cap \calM^{[2h]}_n (2t_2)$.

By Propositions \ref{prop X-strata} and \ref{prop Y-strata}, it is easy to see that the only non-zero parts in $V, T$ are $V_{1,1}'$ and $T_{2,1}$. Let $U_{i_0, j_0}$ (resp. $U_{i_0, j_0}'$) be the affine chart in $\M^{\nspl, [2h]}_n (2t_1)\cap \M^{\nspl, [2h]}_n (2t_2)$ (resp. $\calM^{[2h]}_n (2t_1)\cap \calM^{[2h]}_n (2t_2)$) with $2h+1\leq i_0\leq t_1+h$, $1\leq j_0\leq h-t_2$. We have

\begin{Proposition}
The affine chart $U_{i_0, j_0}$ is isomorphic to $U_{i_0, j_0}'$. Both are smooth and isomorphic to $\mathbb{A}_k^{t_1-t_2-1}$.   
\end{Proposition}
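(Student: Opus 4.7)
The plan is to combine the $\calZ$-stratum and $\calY$-stratum constraints inside the ambient affine chart $\calU_{i_0,j_0}$ from Proposition \ref{affinechartU} (Case 1), which is the relevant case since $2h+1\le i_0$ and $1\le j_0\le h-t_2\le 2h$. First I would import from the proofs of Propositions \ref{prop X-strata} and \ref{prop Y-strata} the block--vanishings that cut out each individual strata model inside $\overline{\M}^{\nspl,[2h]}_n$: the $\calZ$-stratum of type $2t_1$ forces $T_1=0$, $T_{2,2}=T_{2,3}=0$, $V_{2,2}=V_{2,3}=0$, and $Z_1=Z_{2,1}=Z_{2,2}=0$; the $\calY$-stratum of type $2t_2$ further forces $V_2=0$ together with $V'_{1,2}=V'_{1,3}=V'_{1,4}=0$ in the $\calY$-refinement $V_1=(V'_{1,1};V'_{1,2};V'_{1,3};V'_{1,4})$ of sizes $(h-t_2,t_2,t_2,h-t_2)$. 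After imposing all of these, $V$ is supported on $V'_{1,1}$ (of size $h-t_2$) and $T$ is supported on $T_{2,1}$ (of size $t_1-h$).

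Next I would substitute these vanishings into the three nontrivial defining relations of $\calU_{i_0,j_0}$ in Case 1. Three elementary observations make them all trivial: (i) since $H=H_{n-2h}$ is antidiagonal it pairs the first block $T_{2,1}$ with the last block $T_{2,3}$, and $T_{2,3}=0$ gives $T_2^t H T_2=0$; (ii) since $T_1=0$ and $j_0\le 2h$, the coordinate $t_{j_0}$ equals $0$; (iii) since $V_2=0$ and $i_0\ge 2h+1$, the coordinate $v_{i_0}$ equals $0$. Together with $\pi=0$ in the special fiber, the relations $v_{i_0}t_{j_0}-\pi$, $z_{i_0^\vee}v_{i_0}(T_2^tHT_2)-2\pi$, and $z_{i_0^\vee}\bigl(z_{i_0^\vee}(T_2^tHT_2)-2t_{j_0}\bigr)$ all collapse to $0$. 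The only remaining relations are the normalizations $t_{i_0}=1$ and $v_{j_0}=1$, so $U_{i_0,j_0}$ is free in the coordinates of $V'_{1,1}\setminus\{v_{j_0}\}$, of $T_{2,1}\setminus\{t_{i_0}\}$, and of the single scalar $z_{i_0^\vee}\in Z_{2,3}$. Counting gives $(h-t_2-1)+(t_1-h-1)+1=t_1-t_2-1$ free parameters and yields $U_{i_0,j_0}\simeq \mathbb{A}^{t_1-t_2-1}_k$.

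Finally, to compare with the splitting-model chart $U'_{i_0,j_0}=\calU_{i_0,j_0}\cap \calM^{[2h]}_n(2t_1)\cap \calM^{[2h]}_n(2t_2)$, I would repeat the same substitution starting from Proposition \ref{prop splflat} (Case 1), whose only defining relation is $z_{i_0^\vee}v_{i_0}(T_2^tHT_2)-2\pi$. Under the same vanishings ($T_2^tHT_2=0$, $v_{i_0}=0$, $\pi=0$) this equation again collapses to $0$, so $U'_{i_0,j_0}$ is cut out inside $\calU_{i_0,j_0}$ by exactly the same relations as $U_{i_0,j_0}$. Hence $U_{i_0,j_0}\simeq U'_{i_0,j_0}\simeq \mathbb{A}^{t_1-t_2-1}_k$, which is in particular smooth.

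The main bookkeeping obstacle is reconciling the two different block--decompositions of $V_1$ and $T_1$: the $(h,h)$-splitting used for the $\calZ$-stratum (Proposition \ref{prop X-strata}) versus the $(h-t_2,t_2,t_2,h-t_2)$-splitting used for the $\calY$-stratum (Proposition \ref{prop Y-strata}). Once one checks that the $\calY$-splitting nests compatibly into the $\calZ$-splitting so that the union of the two vanishing lists leaves precisely $V'_{1,1}$ and $T_{2,1}$ free, the antidiagonal computation of $T_2^tHT_2$ is immediate and the remaining steps are routine.
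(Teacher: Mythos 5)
Your proposal is correct and follows essentially the same route as the paper: combine the block--vanishings from Propositions \ref{prop X-strata} and \ref{prop Y-strata} so that only $V'_{1,1}$, $T_{2,1}$ and $z_{i_0^\vee}$ survive, observe that $T_2^tHT_2=0$ and $v_{i_0}=0$ (and $t_{j_0}=0$) kill all nontrivial relations in the Case~1 charts of Propositions \ref{affinechartU} and \ref{prop splflat}, and count $(h-t_2-1)+(t_1-h-1)+1=t_1-t_2-1$ free parameters. Your explicit note that $t_{j_0}=0$ is needed to trivialize the relation $z_{i_0^\vee}(z_{i_0^\vee}(T_2^tHT_2)-2t_{j_0})$ is a detail the paper leaves implicit, but it is the same argument.
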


\begin{proof}
The statement follows from Propositions \ref{prop X-strata} and \ref{prop Y-strata}. Note that in $U_{i_0, j_0}$ we have $T_2^tHT_2=0$ by $T_{2,2}=0, T_{2,3}=0$, and $v_{i_0}=0$ by $V_2=0$. Hence, it follows that
\[
U_{i_0, j_0}\simeq U_{i_0, j_0}'\simeq \Spec \frac{k[V_{1,1}',  T_{2,1}, z_{i_0^\vee}]}{(t_{i_0}-1, v_{j_0}-1)}= \mathbb{A}^{t_1-t_2-1}_k.
\]
\end{proof}
Combining the above with Propositions \ref{prop X-strata} and \ref{prop Y-strata}, we obtain Theorem \ref{thm splstrata}.

\section{Blow-up models}\label{BLupsection}
The main result of this section is that, for $t\neq h$, the strata model $\calM^{[2h]}_n(2t)$ is the blow-up of the strata local model $\M^{\loc,[2h]}_n(2t)$ at the worst point $*$ (see~§\ref{Sec. 4.3}). We denote this blow-up by $\M^{\bl,[2h]}_n(2t)$. By \cite[Proposition 2.5.1]{HLS1}, it suffices to compute the blow-up of an affine neighborhood $\calU \subset \M^{\loc,[2h]}_n(2t)$ containing $*$. As in §\ref{affinechart strata}, we treat the cases $t>h$ and $t<h$ separately.

\subsection{Affine chart for the blow-up $t>h$}
Let $\Lambda\subset C$ be a vertex lattice of type $2t$ with $t> h$. We begin by computing the blow-up $\M^{\bl,[2h]}_n(2t)$. By an unramified extension, we can reduce to the case where the hermitian form is split. Recall from \cite[\S 4.5.1]{HLS2} that an affine chart $\calU \subset \M^{\loc, [2h]}_n(2t)$ is isomorphic to
\begin{equation}
\calU \simeq  \Spec A=\Spec \frac{k[X,Y]}{(X-X^{ad}, \wedge^2(X,Y))}.
\end{equation}
Here $X^{ad}:=HX^tH$ where $H$ is the unit anti-diagonal matrix and the matrices $X, \, Y$ are of sizes $(t-h)\times (t-h)$ and $(t-h)\times 2h$ respectively.

Consider the intersection of $\calU$ and the worst point $\calU \cap (\Pi\Lambda_{-h}, \Pi \Lambda_{h})$. This smooth closed subscheme is defined by the ideal $I = (X, Y)$. Let $\calU^\bl$ be the blow-up of $\calU$ along $I=(X,Y)$. We have the blow-up morphism $\rho: \U^{\rm bl}\rightarrow \U$, where
\begin{equation}
	\U^{\rm bl}:={\rm Proj (\tilde{A})}.
\end{equation}
Here $\tilde{A}$ is the graded $A$-algebra $\oplus_{d\geq 0} I^d$ with $I^0=A$.

\begin{Proposition}\label{prop Zbl}
The blow-up $\U^{\rm bl}$ is smooth of dimension $t+h$. The exceptional divisor $\calU^{\rm exc}$ is isomorphic to
\[
\PP_k^{t-h-1}\times \PP_k^{2h}.
\]
\end{Proposition}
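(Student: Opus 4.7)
The plan is to realize $\calU^\bl$ concretely through an explicit affine cover derived from the rank-one structure of the matrix $(X|Y)$. Since this matrix has rank at most one, we may write $(X|Y)=v\otimes\alpha^t$ for a column $v\in k^{t-h}$ and a row $\alpha=(\alpha_1,\alpha_2)\in k^{t-h}\oplus k^{2h}$; the symmetry $X=X^{ad}$ forces $\alpha_1=\mu Hv$ for a unique scalar $\mu\in k$ whenever $v\neq 0$. This yields a rational map
\[
\phi\colon \calU\dashrightarrow \PP^{t-h-1}_k\times \PP^{2h}_k,\qquad (X,Y)\longmapsto \bigl([v];[\mu:w_2]\bigr),
\]
defined away from the worst point, and one expects $\calU^\bl$ to be realized as the closure of its graph.

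To make this precise I would, for each $i_0\in\{1,\dots,t-h\}$ and each $\star\in\{\mu,w_{2,1},\dots,w_{2,2h}\}$, introduce an affine chart isomorphic to $\mathbb{A}^{t+h}$ whose coordinates are the remaining entries of $v$ (after setting $v_{i_0}=1$), the remaining entries of $(\mu,w_2)$ (after setting $\star=1$), and a single scale parameter $\epsilon$. This chart maps to $\calU$ via $(X,Y)=(\epsilon\mu v(Hv)^t,\,\epsilon v w_2^t)$. A direct check shows that the image lies in $\calU$ and that the pullback of the ideal $I=(X,Y)$ equals $(\epsilon)$, hence is principal; by the universal property of the blow-up the chart factors through $\calU^\bl\to\calU$.

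I would then verify that the resulting $(t-h)(2h+1)$ charts cover $\calU^\bl$, that each is an open immersion—this follows since each chart is smooth of the correct dimension $t+h$ and the morphism is birational onto $\calU^\bl$ and injective on points, the preimage in the chart being recoverable from the projective classes $([v];[\mu:w_2])$ together with the value of $\epsilon$—and that the transition maps on overlaps are the expected rescalings of the coordinates. Smoothness and the dimension equality $\dim\calU^\bl=t+h$ follow at once. The exceptional divisor $\calU^{\rm exc}$ is cut out in each chart by $\epsilon=0$, and the surviving coordinates parameterize the two projective factors of $\phi$; the patching identifies $\calU^{\rm exc}$ with $\PP_k^{t-h-1}\times\PP_k^{2h}$.

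The main obstacle is verifying that these charts really do assemble into $\calU^\bl$, rather than into a proper open subscheme or some non-separated variant. The decisive check is that $\epsilon$ serves as a local uniformizer of the exceptional divisor in each chart; this uses in an essential way the rank-one constraint together with the symmetry $X=X^{ad}$, which together prevent extra divisorial components in the preimage of $\ast$. Once this point is settled, the smoothness, the dimension count, and the product form of the exceptional divisor all follow immediately from the explicit local coordinates.
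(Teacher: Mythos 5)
Your parametrization $(X\,|\,Y)=\epsilon\, v\cdot(\mu (Hv)^t,\,w_2^t)$ is correct, and the resulting $(t-h)(2h+1)$ affine charts match, chart by chart, the nonredundant charts $D_+(\varepsilon_{r,s})$ that the paper computes directly from $\mathrm{Proj}\bigl(\bigoplus_{d}I^d\bigr)$: your column $v$ corresponds to the homogeneous coordinates $\varepsilon_{1,s},\dots,\varepsilon_{t-h,s}$, your $w_2$ to $\varepsilon_{r,t-h+1},\dots,\varepsilon_{r,t+h}$, and your auxiliary scalar $\mu$ to the extra coordinate $\varepsilon_{r,t-h+1-r}$ that appears in the paper's charts. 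So the local computation, the dimension count, and the description of the exceptional locus are all sound. The genuine gap is exactly at the point you flag: identifying the union $\Gamma$ of your charts (the closure of the graph of $\phi$ in $\calU\times\PP^{t-h-1}_k\times\PP^{2h}_k$) with $\calU^{\rm bl}$. The universal property only gives you a morphism $\Gamma\to\calU^{\rm bl}$ over $\calU$; this morphism is proper, birational and (as you argue) injective on points, but that is not enough to conclude it is an isomorphism: a proper birational bijection onto a non-normal variety need not be one, and you cannot assume $\calU^{\rm bl}$ is normal, since its smoothness is precisely what is being proved. Smoothness of the \emph{source} charts says nothing about the target.

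There are two clean ways to close this. (a) Show that the morphism $\psi:\PP^{t-h-1}_k\times\PP^{2h}_k\to\PP^{N-1}_k$ with $N=(t-h)(t+h)$, sending $([v],[\mu:w_2])$ to the list of matrix entries $[\,\mu v_iv_{j'}\mid v_iw_{2,l}\,]$, is a closed immersion; it is everywhere defined because $v\neq0$ and $(\mu,w_2)\neq(0,0)$, and injectivity on geometric points and on tangent vectors is the computation you began. Then $\mathrm{id}\times\psi$ carries $\Gamma$ isomorphically onto the closure of the graph of $\calU\setminus\{*\}\to\PP^{N-1}_k$, which \emph{is} $\calU^{\rm bl}$ by definition of the blow-up of the integral scheme $\calU$ along $I=(X,Y)$. (b) Do what the paper does: introduce degree-one variables $\varepsilon_{i,j}$ for all $N$ entries and compute every chart $D_+(\varepsilon_{r,s})$ of $\mathrm{Proj}(\tilde A)$ directly, using $\wedge^2(X,Y)=0$ and $X=X^{ad}$ to eliminate all but $t+h$ coordinates; this yields $\mathbb{A}^{t+h}_k$ (or an open subscheme of it, for the redundant indices $(r,s)$ with $s\le t-h$ and $r+s\neq t-h+1$), and the covering and identification issues never arise because one is manifestly working with the blow-up itself. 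Either repair is routine, but as written your argument does not establish that the scheme you have built is all of $\calU^{\rm bl}$.
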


\begin{proof}
There are $(t-h)(t+h)$- affine patches in the blow-up of $\Spec A$ along the ideal $I=(X, Y)$. Let
\[
(X, Y)=\left(\begin{array}{ccc|ccc}
 x_{1,1}    &  \cdots & x_{1,t-h}& x_{1, t-h+1}& \cdots &x_{1, t+h}\\
 \vdots& \ddots& \vdots& \vdots&\ddots&\vdots\\
x_{t-h,1}    &  \cdots & x_{t-h,t-h}& x_{t-h, t-h+1}& \cdots &x_{t-h, t+h}\\
\end{array}
\right).
\]
Let $\varepsilon=(\epsilon_{i,j})_{1\leq i\leq t-h, 1\leq j\leq t+h}$ be the represented elements of $x_{i,j}$ in $(X, Y)$, considered as homogeneous elements of degree $1$. Assume that $\varepsilon_{r,s}=1$. The affine chart $D_+(\varepsilon_{r,s})\subset \U^{\rm bl}$ is  given by 
\begin{equation}
\Spec \frac{A[\varepsilon_{i,j}]_{1\leq i\leq t-h, 1\leq j\leq t+h}}{J_{r,s}},	
\end{equation}
where $J_{r,s}$ is genetrated by $\varepsilon_{r,s}-1$ and
\begin{equation}
\{f\in A[\varepsilon_{i,j}]\mid \exists N\geq 0, x_{r,s}^N\cdot f\in (x_{i,j}-x_{r,s}\varepsilon_{i,j})_{1\leq i\leq t-h, 1\leq j\leq t+h}\}.	
\end{equation}
Thus, the matrix $(X, Y)=x_{r,s} \varepsilon$, and $\wedge^2(X,Y)$ is equivalent to $x_{r,s}^2(\varepsilon_{i,j}-\varepsilon_{r,j}\varepsilon_{i,s})=0$. Hence, $(\varepsilon_{i,j}-\varepsilon_{r,j}\varepsilon_{i,s})\in J_{r,s}$. The condition $X=X^{ad}$ is equivalent to $x_{i,j}=x_{t-h+1-j,t-h+1-i}$ for all $1\leq i, j\leq t-h$. Note that $x_{i,j}=x_{r,s}\varepsilon_{i,j}=x_{r,s}\varepsilon_{r,j}\varepsilon_{i,s}$. Thus,
\[
x_{r,s}(\varepsilon_{r,j}\varepsilon_{i,s}-\varepsilon_{r,t-h+1-i}\varepsilon_{t-h+1-j,s})=0.
\]
Let $i=r$, we get $x_{r,s}(\varepsilon_{r,j}-\varepsilon_{r,t-h+1-r}\varepsilon_{t-h+1-j,s})=0$, so that 
\begin{equation}\label{eq Jrs}
(\varepsilon_{r,j}-\varepsilon_{r,t-h+1-r}\varepsilon_{t-h+1-j,s})\in J_{r,s} ~\text{for all}~ 1\leq j\leq t-h.   
\end{equation}
If $r+s=t-h+1$, we obtain $\varepsilon_{r,t-h+1-r}=\varepsilon_{r,s}=1$. It is easy to see that 
\[
D_+(\varepsilon_{r,s})=\Spec \frac{k[\varepsilon_{1,s}, \cdots, \varepsilon_{t-h,s}, \varepsilon_{r,t-h+1},\cdots, \varepsilon_{r,t+h}, x_{r,s}]}{(\varepsilon_{r,s}-1)}\simeq \mathbb{A}^{t+h}_k.
\]
If $r+s\neq t-h+1$, there are two cases. We either have $s> t-h$ or $1\leq t\leq t-h$. In the first case, the variable $\varepsilon_{r,s}$ occurs in the sequences $\varepsilon_{1,s}, \cdots, \varepsilon_{t-h,s}$ and $\varepsilon_{r,t-h+1},\cdots, \varepsilon_{r,t+h}$. Thus,
\[
D_+(\varepsilon_{r,s})=\Spec \frac{k[\varepsilon_{i,s}, \varepsilon_{r,j}, \varepsilon_{r, t-h+1-r}, x_{r,s}]}{(\varepsilon_{r,s}-1)}\simeq \mathbb{A}^{t+h}_k.
\]
Here $i\in \{1, \cdots, t-h\}, j\in \{t-h+1, \cdots, t+h\}-\{s\}$. In the second case, by letting $j=s$ in (\ref{eq Jrs}), we get $1=\varepsilon_{r,t-h+1-r}\varepsilon_{t-h+1-s,s}$, and
\[
D_+(\varepsilon_{r,s})=\Spec \frac{k[\varepsilon_{1,s}, \cdots, \varepsilon_{t-h,s}, \varepsilon_{r,t-h+1},\cdots, \varepsilon_{r,t+h},\varepsilon_{r,t-h+1-r}, x_{r,s}]}{(\varepsilon_{r,s}-1, \varepsilon_{r,t-h+1-r}\varepsilon_{t-h+1-s,s}-1)}.
\]
From the above, we prove that all affine patches $D_+(\varepsilon_{r,s})$ are smooth of dimension $t+h$, so as $\calU^\bl$. Finally, consider the exceptional divisor $\calU^{\rm exc}$, where we set $x_{r,s}=0$. We have $\calU^{\rm exc}=\PP_k^{t-h-1}\times \PP_k^{2h}$.
\end{proof}

\subsection{Affine chart for the blow-up $t<h$}
Now we consider the blow-up $\M^{\bl,[2h]}_n(2t)$ when $t<h$. Let $\Lambda\subset C$ be a vertex lattice of type $2t$ with $t< h$. Recall from \cite[\S 4.5.2]{HLS2} that an affine chart $\calU \subset \M^{\loc, [2h]}_n(2t)$ is isomorphic to
\[
\calU \simeq  \Spec B=\Spec \frac{k[Z]}{( \wedge^2(Z))},
\]
where $Z$ is of size $(n-2h)\times (h-t)$. The worst point $(\Pi \Lambda_{n-h}, \Pi \Lambda_h)$ is defined by the ideal $I=(Z)$. Let $\calU^\bl$ be the blow-up of $\calU$ along $I$. We have
\begin{Proposition}\label{prop Ybl}
The blow-up $\U^{\rm bl}$ is smooth of dimension $n-h-t-1$. The exceptional divisor $\calU^{\rm exc}$ is isomorphic to
\[
\PP_k^{h-t-1}\times \PP_k^{n-2h-1}.
\]
\end{Proposition}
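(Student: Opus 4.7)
The plan is to mirror the local computation of Proposition~\ref{prop Zbl}, with the crucial simplification that the defining ideal of $\calU$ here contains only the rank-one condition $\wedge^2(Z)=0$, without any symmetry constraint such as $X=X^{\mathrm{ad}}$. Write $Z=(z_{i,j})$ with $1\le i\le n-2h$ and $1\le j\le h-t$, and let $\varepsilon_{i,j}$ denote the corresponding degree-one generators of the Rees algebra $\widetilde B = \bigoplus_{d\ge 0} I^d$. I would cover $\calU^{\bl} = \Proj(\widetilde B)$ by the $(n-2h)(h-t)$ standard affine charts $D_+(\varepsilon_{r,s})$ indexed by pairs $(r,s)$.

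First, fix a pair $(r,s)$ and work on $D_+(\varepsilon_{r,s})$, setting $\varepsilon_{r,s}=1$. The standard blow-up relations give $z_{i,j}=z_{r,s}\varepsilon_{i,j}$ on this chart. Substituting into the $2\times 2$ minors $z_{i,j}z_{r,s}-z_{i,s}z_{r,j}$ of $Z$ yields $z_{r,s}^2(\varepsilon_{i,j}-\varepsilon_{i,s}\varepsilon_{r,j})=0$; since we are taking the strict transform (equivalently, passing to the $z_{r,s}$-saturation inside $\widetilde B$), this collapses to the relations
\[
\varepsilon_{i,j}=\varepsilon_{i,s}\varepsilon_{r,j}\quad \text{for all } 1\le i\le n-2h,\ 1\le j\le h-t.
\]
These relations allow us to eliminate every $\varepsilon_{i,j}$ with $i\ne r$ and $j\ne s$ in terms of the free variables $\{\varepsilon_{i,s}\}_{i\ne r}$ and $\{\varepsilon_{r,j}\}_{j\ne s}$, together with $z_{r,s}$. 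Hence
\[
D_+(\varepsilon_{r,s})\simeq \Spec k\bigl[z_{r,s},\,\varepsilon_{i,s}\ (i\ne r),\,\varepsilon_{r,j}\ (j\ne s)\bigr]\simeq \mathbb{A}_k^{n-h-t-1},
\]
giving both the smoothness and the dimension claim, since $(n-2h-1)+(h-t-1)+1=n-h-t-1$.

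Finally, for the exceptional divisor I would set $z_{r,s}=0$ on each chart; the resulting affine spaces of dimension $n-h-t-2$ glue via the Segre pattern dictated by the relation $\varepsilon_{i,j}=\varepsilon_{i,s}\varepsilon_{r,j}$. Concretely, the projective coordinates $[\varepsilon_{1,s}:\cdots:\varepsilon_{n-2h,s}]$ define a copy of $\PP_k^{n-2h-1}$ and $[\varepsilon_{r,1}:\cdots:\varepsilon_{r,h-t}]$ a copy of $\PP_k^{h-t-1}$, and the relation above is precisely the Segre relation. Therefore $\calU^{\mathrm{exc}}\simeq \PP_k^{h-t-1}\times \PP_k^{n-2h-1}$. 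I do not anticipate any genuine obstacle here: the only mildly delicate point is the passage from $z_{r,s}^2(\varepsilon_{i,j}-\varepsilon_{i,s}\varepsilon_{r,j})=0$ to the clean relation $\varepsilon_{i,j}=\varepsilon_{i,s}\varepsilon_{r,j}$, which is handled by the standard fact that the Rees algebra is $z_{r,s}$-torsion-free on $D_+(\varepsilon_{r,s})$.
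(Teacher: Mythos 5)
Your proposal is correct and follows essentially the same route as the paper's proof: the same $(n-2h)(h-t)$ charts $D_+(\varepsilon_{r,s})$, the same saturation of $z_{r,s}^2(\varepsilon_{i,j}-\varepsilon_{i,s}\varepsilon_{r,j})$ to the Segre relations, and the same identification of each chart with $\mathbb{A}_k^{\,n-h-t-1}$ and of the exceptional divisor with $\PP_k^{h-t-1}\times\PP_k^{n-2h-1}$. Your extra remarks on torsion-freeness of the Rees algebra and the Segre gluing only make explicit what the paper leaves implicit.
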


\begin{proof}
The proof is similar to Proposition \ref{prop Zbl}. There are $(n-2h)(h-t)$- affine patches in the blow-up of $\Spec B$ along the ideal $I=(Z)$. Let $Z=(z_{i, j})$ and let $\varepsilon=(\varepsilon_{i,j})$ be the represented elements of $z_{i,j}$ for all $1\leq i\leq n-2h, 1\leq j\leq h-t$. Assume that $\varepsilon_{r,s}=1$. The affine chart $D_+(\varepsilon_{r,s})\subset \U^{\rm bl}$ is  given by 
\begin{equation}
\Spec \frac{B[\varepsilon_{i,j}]_{1\leq i\leq n-2h, 1\leq j\leq h-t}}{J_{r,s}},	
\end{equation}
where $J_{r,s}$ is genetrated by $\varepsilon_{r,s}-1$ and
\begin{equation}
\{f\in A[\varepsilon_{i,j}]\mid \exists N\geq 0, z_{r,s}^N\cdot f\in (z_{i,j}-z_{r,s}\varepsilon_{i,j})_{1\leq i\leq n-2h, 1\leq j\leq h-t}\}.	
\end{equation}
Thus, the matrix $Z=z_{r,s} \varepsilon$, and $\wedge^2(Z)$ is equivalent to $z_{r,s}^2(\varepsilon_{i,j}-\varepsilon_{r,j}\varepsilon_{i,s})=0$. so that $(\varepsilon_{i,j}-\varepsilon_{r,j}\varepsilon_{i,s})\in J_{r,s}$. Let $a_i=\varepsilon_{i,s}, b_j=\varepsilon_{r,j}$. Note that $a_r=b-s=\varepsilon_{r,s}=1$. Therefore, we obtain the affine patch
\[
D_+(\varepsilon_{r,s})=\Spec \frac{k[a_i, b_j,  z_{r,s}]}{(a_r-1, b_s-1)}\simeq \mathbb{A}^{n-h-t-1}_k,
\]
for $1\leq i\leq n-2h, 1\leq j\leq h-t$. It is easy to see that when $z_{r,s}=0$, the exceptional divisor $\calU^{\rm exc}$ is isomorphic to
$\PP_k^{h-t-1}\times \PP_k^{n-2h-1}$.
\end{proof}

\subsection{Relation of strata models and the blow-up}\label{Sec. 4.3}
 
In this subsection we prove that, for \(t\neq h\), the strata model $\calM^{[2h]}_n(2t)$ coincides with the blow-up $\M^{\bl,[2h]}_n(2t)$. The main result is the following proposition.

\begin{Proposition}\label{prop 43}
Assume that $t\neq h$. The strata models $\calM^{[2h]}_n(2t)$ are equal to the blow-up $\M^{\bl,[2h]}_n(2t)$  . 
\end{Proposition}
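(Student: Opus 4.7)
The plan is to realize both schemes as closed subschemes of $\M^{\spl,[2h]}_n$ and show they coincide. By Theorem \ref{thm 3.6}, the forgetful morphism $\tau\colon \M^{\spl,[2h]}_n\to \M^{\loc,[2h]}_n$ is the blow-up at the worst point $*$, and since $*\in \M^{\loc,[2h]}_n(2t)$ the universal property of blow-ups identifies $\M^{\bl,[2h]}_n(2t)$ with the strict transform of $\M^{\loc,[2h]}_n(2t)$ under $\tau$, i.e.\ with the scheme-theoretic closure of $\tau^{-1}(\M^{\loc,[2h]}_n(2t)\setminus\{*\})$ inside $\M^{\spl,[2h]}_n$. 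On the other side, $\calM^{[2h]}_n(2t)=\M^{\spl,[2h]}_n\cap \M^{\nspl,[2h]}_n(2t)$ is, by definition, a closed subscheme of $\M^{\spl,[2h]}_n$.

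The first step is to check the agreement away from the exceptional divisor. Since $\tau$ restricts to an isomorphism $\M^{\spl,[2h]}_n\setminus \Exc\simeq \M^{\loc,[2h]}_n\setminus\{*\}$, and the extra datum $(\calG_h,\calG_{-h})$ in the naive splitting model is uniquely determined by the splitting condition away from $*$ (cf.\ \S\ref{RSpl}), both $\calM^{[2h]}_n(2t)$ and $\M^{\bl,[2h]}_n(2t)$ pull back to $\M^{\loc,[2h]}_n(2t)\setminus\{*\}$ on this open set and agree there. It follows that $\M^{\bl,[2h]}_n(2t)\subseteq \calM^{[2h]}_n(2t)$, since the left-hand side is by construction the scheme-theoretic closure of this common open.

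The core step is the matching on (and across) the exceptional fiber via a direct chart comparison. Propositions \ref{prop X-strata} and \ref{prop Y-strata} cover $\calM^{[2h]}_n(2t)$ by charts $U'_{i_0,j_0}$ isomorphic to $\mathbb{A}^{t+h}_k$ (if $t>h$) or $\mathbb{A}^{n-h-t-1}_k$ (if $t<h$); Propositions \ref{prop Zbl} and \ref{prop Ybl} cover $\M^{\bl,[2h]}_n(2t)$ by affine patches $D_+(\varepsilon_{r,s})$ of the same form. Using the relations $X=TS^t-\pi I_n$, $Y=VZ^t-\pi I_n$ of (\ref{eq 334}), the entries of $X,Y$ that generate the blow-up ideal in $\Spec A$ or $\Spec B$ become products of entries of $T,V,S,Z$; fixing $t_{i_0}=v_{j_0}=1$ on the strata-model side then corresponds precisely to passing to the blow-up patch $D_+(\varepsilon_{r,s})$ with $(r,s)=(i_0,j_0)$, and the remaining coordinates match the $\varepsilon_{i,j}$ term by term.

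The main obstacle is organizing this coordinate dictionary without error across the four matrix-block cases of \S\ref{affinechart strata}. To avoid a tedious case-by-case calculation, the argument can be closed abstractly: both schemes are reduced (indeed smooth, by Theorem \ref{thm splstrata} and Propositions \ref{prop Zbl}/\ref{prop Ybl}), both are irreducible (their affine covers are by affine spaces and the covers are connected through the exceptional divisor, which is irreducible of the predicted form $\PP^{t-h-1}_k\times \PP^{2h}_k$ or $\PP^{h-t-1}_k\times \PP^{n-2h-1}_k$), both have equal dimension by those same dimension formulas, and they contain the same dense open $\tau^{-1}(\M^{\loc,[2h]}_n(2t)\setminus\{*\})$. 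Two reduced irreducible closed subschemes of $\M^{\spl,[2h]}_n$ of the same dimension sharing a dense open subset must coincide, which yields the desired equality $\calM^{[2h]}_n(2t)=\M^{\bl,[2h]}_n(2t)$.
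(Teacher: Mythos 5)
Your proposal is correct and follows essentially the same route as the paper: identify $\M^{\bl,[2h]}_n(2t)$ with the scheme-theoretic closure of $U=\tau^{-1}(\M^{\loc,[2h]}_n(2t)\setminus\{*\})$, observe that $U\subset \calM^{[2h]}_n(2t)$ because $\tau$ is an isomorphism off the worst point, and conclude by irreducibility and reducedness of $\calM^{[2h]}_n(2t)$. The chart-by-chart comparison you sketch in the middle is not needed (and the paper does not perform it inside this proof); your abstract closing argument is exactly the paper's.
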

\begin{proof}
From Theorem \ref{thm 3.6}, the forgetful morphism $\tau : \M^{\spl,[2h]}_n \rightarrow \M^{\loc,[2h]}_n$ is the blow-up $ \M^{\bl,[2h]}_n$ of $\M^{\loc, [2h]}_n$ along the worst point $*$. Thus, we have 
\[
\M^{\bl,[2h]}_n =  \M^{\spl,[2h]}_n \subset  \M^{\nspl,[2h]}_n \xrightarrow{\tau}  \M^{\loc,[2h]}_n.
\]
Moreover, the map $\tau$ induces an isomorphism 
\[
\M^{\nspl,[2h]}_n\setminus \tau^{-1}(*) \cong  \M^{\loc,[2h]}_n \setminus \{*\}.
\]
Recall the definitions of the strata models: $\calM^{[2h]}_n(2t)=\M^{\spl, [2h]}_n \cap \M^{\nspl,[2h]}_n(2t)$ and $\M^{\bl,[2h]}_n(2t)$ is the strict transform of $\M^{\loc,[2h]}_n(2t)$. Set $D: =\M^{\loc,[2h]}_n(2t) \cap \{*\}$ and let $U=\tau^{-1} \bigl(\M^{\loc,[2h]}_n(2t)\setminus D\bigr).$ By definition $\M^{\bl,[2h]}_n(2t)$ is the scheme-theoretic closure of $U$ in $\M^{\bl,[2h]}_n$. Observe that $ U \subset \calM^{[2h]}_n(2t)$, since $\tau$ is an isomorphism over $\M^{\loc,[2h]}_n\setminus\{*\}$ and thus
\[
\begin{aligned}
U
&= \tau^{-1}\bigl(\M^{\loc,[2h]}_n(2t)\setminus D\bigr)
 = \M^{\nspl,[2h]}_n(2t)\cap \tau^{-1}\!\bigl(\M^{\loc,[2h]}_n\setminus\{*\}\bigr)\\
&\subset \M^{\spl,[2h]}_n \cap \M^{\nspl,[2h]}_n(2t)
 = \calM^{[2h]}_n(2t).
\end{aligned}
\]
Hence, $U$ is an open (hence dense) subset of the irreducible closed subscheme $ \calM^{[2h]}_n(2t)$ of $\M^{\spl,[2h]}_n= \M^{\bl,[2h]}_n $. Thus from the above we deduce that $\M^{\bl,[2h]}_n(2t)=\calM^{[2h]}_n(2t)$.
\end{proof}
\quash{
\begin{Corollary}\label{Cor.321}
The BT strata $\mathcal{Z}^{\rm bl} (\Lambda),\, \mathcal{Y}^{\rm bl}(\Lambda^\sharp) $ are equal to $\mathcal{Z}^{\rm spl} (\Lambda)$ and $ \mathcal{Y}^{\rm spl}(\Lambda^\sharp) $ respectively.
\end{Corollary}
}
\section{Properties of Bruhat-Tits strata}\label{LPBT}
In this section, the goal is to obtain certain nice properties (e.g. reducedness, smoothness, irreducibility) for the Bruhat-Tits strata $\mathcal{Z} (\Lambda),\, \mathcal{Y}(\Lambda^\sharp)$ defined in (\ref{eq ZYstrata}). To do this, we will relate these Bruhat-Tits strata with the strata models via the local model diagram.


First, let us briefly recall the construction of such a local diagram for the BT-strata $\mathcal{Z}^{\rm loc} (\Lambda),\, \mathcal{Y}^{\rm loc}(\Lambda^\sharp) $ given in \cite[\S 4]{HLS2}. Assume that $\Lambda\subset C$ is a vertex lattice of type $2t$. 

\begin{enumerate}
\item For $t \geq h$, define $\tilde{\mathcal{Z}}^{\rm loc}(\Lambda)$ to be a projective formal scheme over $\bar{k}$ that represents the functor sending each $\bar{k}$-algebra $R$ to the set of tuples $(X, \iota, \lambda, \rho, f)$ where:
\begin{itemize}
    \item $(X, \iota, \lambda, \rho) \in \mathcal{Z}^{\rm loc} (\Lambda)(R)$, 
    \item $f$ is an isomorphism between the standard lattice chain $\calL_{[2h,2t],R} := \calL_{[2h,2t]}\otimes R$ and the lattice chain of de Rham realizations:
    \[
f: \begin{tikzcd}
\Lambda_{-t,R}\arrow{r} \arrow[d,"\sim"]
&
\Lambda_{-h,R} \arrow{r} \arrow[d,"\sim"]
&
\Lambda_{h,R} \arrow{r} \arrow[d,"\sim"]
&
\Lambda_{t,R} \arrow[d,"\sim"]
\\
D(X_{\Lambda}) \arrow{r}
&
D(X) \arrow{r} \arrow{r} 
&
D(X^{\vee})\arrow{r}
&
D(X_{\Lambda^{\sharp}})
\end{tikzcd}.
\]
\end{itemize}

\item For $t \leq h$, define $\tilde{\mathcal{Y}}^{\rm loc}(\Lambda^{\sharp})$ to be a projective formal scheme over $\bar{k}$ that represents the functor sending each $\bar{k}$-algebra $R$ to the set of tuples $(X, \iota, \lambda, \rho, f)$ where:
\begin{itemize}
    \item $(X, \iota, \lambda, \rho) \in \mathcal{Y}^{\rm loc} (\Lambda^{\sharp})(R)$, 
    \item $f$ is an isomorphism between the standard lattice chain $\calL_{[2h,2t],R} $ and the lattice chain of de Rham realizations:
    \[
f: \begin{tikzcd}
\Lambda_{t,R}\arrow{r} \arrow[d,"\sim"]
&
\Lambda_{h,R} \arrow{r} \arrow[d,"\sim"]
&
\Lambda_{n-h,R} \arrow{r} \arrow[d,"\sim"]
&
\Lambda_{n-t,R} \arrow[d,"\sim"]
\\
D(X_{\Lambda^\sharp}) \arrow{r}
&
D(X^{\vee}) \arrow{r} \arrow{r} 
&
D(X)\arrow{r}
&
D(X_{\pi^{-1}\Lambda})
\end{tikzcd}.
\]
\end{itemize}
\end{enumerate}
Recall that $\scrG_{[2h, 2t]}$ is the smooth group scheme of automorphisms of the lattice chain $ \calL_{[2h,2t]}$. We have the local model diagram 
 \begin{equation}\label{LMdiagram1}
\begin{tikzcd}
&\tilde{\mathcal{Z}}^{\rm loc}(\Lambda)\arrow[dl, "\psi_1"']\arrow[dr, "\psi_2"]  & \\
\mathcal{Z}^{\rm loc}(\Lambda)  &&  \M^{\loc, [2h]}_n(2t)
\end{tikzcd}
\end{equation}
where $\psi_1$ is a smooth $\scrG_{[2h, 2t], \bar{k}}$-torsor of relative dimension $\operatorname{dim}\scrG_{[2h, 2t], \bar{k}}$ and $\psi_2$ is a smooth morphism of relative dimension $\operatorname{dim}\scrG_{[2h, 2t], \bar{k}}$. (We get similar local model diagrams for $\mathcal{Y}^{\rm loc}(\Lambda^{\sharp})$ and $\calZ^\loc(\Lambda_1)\cap \mathcal{Y}^{\rm loc}(\Lambda^{\sharp}_2)$.) Here, $\psi_1$ is defined by forgetting the trivialization $f$ and $ \psi_2$ is defined by attaching the Hodge filtration of the strict $O_{F_0}$-modules to the lattice chain through the isomorphism $f$ (see \cite[\S 4.2]{HLS2} for more details). 

Recall from \S \ref{StrataBlModels} there exists a projective $\scrG_{[2h]}$-morphism 
\[
\tau: \M^{\nspl,[2h]}_n(2t)\rightarrow \M^{\loc,[2h]}_n(2t).
\]
From the above, we deduce that the stratum $\mathcal{Z}^\Kra(\Lambda)$ in the Kr\"amer RZ space is a linear modification of $\mathcal{Z}^{\rm loc}(\Lambda)$ in the sense of \cite[\S 2]{P}. In particular there is a local model diagram 
 \begin{equation}\label{LMdiagram3}
\begin{tikzcd}
&\tilde{\mathcal{Z}}^\Kra(\Lambda)\arrow[dl, "\psi'_1"']\arrow[dr, "\psi'_2"]  & \\
\mathcal{Z}^\Kra(\Lambda)  &&  \M^{\nspl, [2h]}_n(2t)
\end{tikzcd}.
\end{equation}
We have similar local model diagrams for $\mathcal{Y}^{\Kra}(\Lambda^{\sharp})$ and $\calZ^\Kra(\Lambda_1)\cap \mathcal{Y}^{\Kra}(\Lambda^{\sharp}_2)$. 

It is worth mentioning that for $S=\Spec R$, with $R$ a $\bar{k}$-algebra, the condition $ x_{*} (\operatorname{Lie}(Y\times S)) \subset \mathcal{F}_X\cap \calF_{X^\vee}$ of $\calZ^{\Kra}(\Lambda)(S)$ (see Definiton \ref{def 2.10}) is equivalent, via the local model diagram, to $\calG_{\pm h}\subset L_\pm^\bot$ in the strata model $\calM^{[2h]}_n(2t)(R)$ for $t>h$. Note that 
$ x_{*} (\operatorname{Lie}(Y\times S)) \subset \mathcal{F}_X\cap \calF_{X^\vee}$ translates to 
\begin{equation}\label{eq M'M''}
\Lambda \otimes W_{O_{F_0}}(\kappa) \subset M'(X) \subset M(X)^\sharp,\quad
    \Lambda \otimes W_{O_{F_0}}(\kappa) \subset M''(X) \subset M(X)   
\end{equation}
by Proposition \ref{DieudLatt}. Recall that there are perfect pairings
\[
\operatorname{Fil} (X) \times \operatorname{Lie}(X^\vee)\to \mathcal{O}_S \text{  and  } \operatorname{Fil} (X^\vee) \times \operatorname{Lie}(X)\to \mathcal{O}_S
\]
induced by (\ref{perfectpair}). Let $\calF_{X^\vee}^\bot\subset \operatorname{Fil} (X)$, $\calF_{X}^\bot\subset \operatorname{Fil} (X^\vee)$ be the perpendicular complement of $\calF_{X^\vee}\subset \operatorname{Lie}(X^\vee)$, $\calF_{X}\subset \operatorname{Lie}(X)$ respectively. Since $D(X)=M(X)/\pi_0 M(X)$ (resp. $D(X^\vee)=M(X^\vee)/\pi_0 M(X^\vee)$) can be identified with the standard lattice $\Lambda_{-h, R}$  (resp. $\Lambda_{h, R}$), we translate $\calF_{X^\vee}^\bot\subset \operatorname{Fil} (X)$ (resp. $\calF_{X}^\bot\subset \operatorname{Fil} (X^\vee)$) to $\calG_{-h}\subset \calF_{-h}$ (resp. $\calG_{h}\subset \calF_{h}$), where $\calG_{-h}$ (resp. $\calG_{h}$) is the lattice corresponding to $\calF_{X^\vee}^\bot$ (resp. $\calF_{X}^\bot$). Then note that $M'(X)={\rm Pr}_2^{-1}(\calF_{X^\vee})$, $M''(X)={\rm Pr}_1^{-1}(\calF_{X})$, where
\[
{\rm Pr}_1: M(X)\rightarrow \operatorname{Lie}(X),\quad 
{\rm Pr}_2: M(X^\vee)\rightarrow \operatorname{Lie}(X^\vee).
\]
Therefore, the relation (\ref{eq M'M''}) is equivalent to $L_\pm \subset \calG_{\pm h}^\bot$, and so $\calG_{\pm h}\subset \Lambda_\pm ^\bot$. A similar argument works for the $\calY^\Kra$-strata.

Let $\Lambda_0\subset C$ be a vertex of type $2h$. Recall that we have morphisms $\tau_1: \mathcal{Z}^{\rm Kra}(\Lambda_0)\rightarrow \mathcal{Z}^{\rm loc}(\Lambda_0)$ (resp. $\tau_2: \mathcal{Y}^{\rm Kra}(\Lambda_0^\sharp)\rightarrow \mathcal{Y}^{\rm loc}(\Lambda_0^\sharp)$) given by $(X, \iota, \Lambda_0, \rho, \mathcal{F}_X, \mathcal{F}_{X^\vee})\mapsto (X, \iota, \Lambda_0, \rho)$, where $ \mathcal{Z}^{\rm loc}(\Lambda_0)(\kappa)=\mathcal{Y}^{\rm loc}(\Lambda_0^{\sharp})(\kappa)=\{\Lambda_0 \otimes W_{O_{F_0 }}(\kappa) \}$ (the worst point); see also \cite[Corollary 2.11]{ZacZhao3}. 

\begin{Proposition}\label{prop KraNexc}
Assume $h$ is not $\pi$-modular. The strata $ \mathcal{Z}^{\rm Kra}(\Lambda_0)(\kappa)=\mathcal{Y}^{\rm Kra}(\Lambda_0^\sharp)(\kappa)$ is a linked (quiver) variety with dimension $n-1$. It has two irreducible components and their intersection is irreducible of dimension $n-2$.
\end{Proposition}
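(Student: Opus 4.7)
The plan is to reduce, via the local-model diagram, to an explicit analysis of the naive exceptional divisor $\NExc=\tau^{-1}(\ast)$ of the naive splitting model $\M^{\nspl,[2h]}_n$, and then to read off dimensions and components from the affine charts of Proposition \ref{affinechartU}.

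First, since $\Lambda_0$ has type $2t=2h$, the local stratum $\mathcal{Z}^{\rm loc}(\Lambda_0)=\mathcal{Y}^{\rm loc}(\Lambda_0^\sharp)$ consists of the single worst point. The $t=h$ analogue of the local-model diagram (\ref{LMdiagram3}) therefore provides a smooth roof relating $\mathcal{Z}^{\rm Kra}(\Lambda_0)=\mathcal{Y}^{\rm Kra}(\Lambda_0^\sharp)$ to $\NExc$, so smoothness, irreducibility, and dimensions of components can be checked on $\NExc$. This identification is also the source of the linked (quiver) description of the stratum: once the underlying point is fixed at the worst point, the remaining moduli are the two lines $\calG_{\pm h}\subset \Pi\Lambda_{\pm h}$, coupled by the splitting and transition conditions, which is precisely a Pappas--Rapoport linked Grassmannian attached to $\Lambda_0$.

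Next, I would work chart-by-chart using Proposition \ref{affinechartU} with $\pi=0$. Only Cases~(1)--(3) meet $\NExc$, and in each of them the relevant defining equation in the special fiber factors as a product of two terms. Comparing with Proposition \ref{prop splflat} identifies one factor with the exceptional divisor $\Exc\subset\M^{\spl,[2h]}_n$, which by Theorem \ref{thm 3.6} is smooth and irreducible of dimension $n-1$ (and isomorphic to the blow-up of $\PP_k^{n-1}$ along $\PP_k^{2h-1}$). The second factor cuts out the complementary ``non-flat'' piece. A direct dimension count on each chart (most transparent in Case~3, where the relation $z_{j_0^\vee}(V_2^tHV_2)=0$ visibly splits into two smooth branches meeting in codimension one) shows that this second piece is also smooth of dimension $n-1$ and meets $\Exc$ in a codimension-one subscheme.

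Finally, I would patch these local pictures to obtain the global assertion. This globalization is the main obstacle: one must verify that the non-flat branches on the various charts glue to a single irreducible divisor, so that $\NExc$ has exactly two components (and not more), and that its scheme-theoretic intersection with $\Exc$ is itself irreducible of dimension $n-2$. The argument should follow the same strategy used in \cite[\S 4]{HLS1} to globalize the irreducibility of $\Exc$, by matching the explicit elimination formulas of §\ref{AffineCharts3.3} on the overlaps of the charts $\U_{i_0,j_0}$. Once this is in place, the dimension and component counts in the proposition follow.
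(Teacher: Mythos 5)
Your first step --- using the local model diagram to identify $\mathcal{Z}^{\rm Kra}(\Lambda_0)=\mathcal{Y}^{\rm Kra}(\Lambda_0^\sharp)$ \'etale-locally with the naive exceptional divisor $\NExc=\tau^{-1}(*)$ --- is exactly the paper's proof. At that point the paper simply invokes \cite[Proposition~2.5.4]{HLS1}, which asserts precisely that $\NExc$ is a linked variety of dimension $n-1$ with two irreducible components whose intersection is irreducible of dimension $n-2$. You instead try to re-derive that input from the affine charts of Proposition~\ref{affinechartU}, and this is where the argument goes wrong.

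The concrete error is that you identify $\NExc\cap\U_{i_0,j_0}$ with the special fiber of the chart $\U_{i_0,j_0}$. These are not the same: $\U_{i_0,j_0}$ is an open neighborhood in $\M^{\nspl,[2h]}_n$ of a point of $\NExc$, and $\NExc$ is cut out inside it by the \emph{additional} conditions $\calF_{\pm h}=\Pi\Lambda_{\pm h}$, i.e.\ $X=Y=0$ (equivalently $S=Z=0$, since $T$ and $V$ contain unit entries). In particular the example you call ``most transparent,'' Case~3, actually refutes your reading: there $\U_{i_0,j_0}=\U'_{i_0,j_0}$ is flat, so by Proposition~\ref{prop splflat} the two branches of $z_{j_0^\vee}(V_2^tHV_2)=0$ are $\Exc$ and a strict transform $\tilde Z_i$ of a component of $\overline{\M}^{\loc,[2h]}_n$; only the first lies in $\NExc$, so on that chart $\NExc$ is irreducible, not two-component. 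The second component of $\NExc$ is the non-flat locus, which is visible only in the non-flat charts (Cases~1 and~2), and your sketch neither isolates it correctly there nor carries out the globalization showing that these local non-flat pieces glue to a single irreducible divisor and that its intersection with $\Exc$ is irreducible of dimension $n-2$ --- a step you yourself flag as ``the main obstacle'' and leave to an analogy with \cite[\S 4]{HLS1}. As written the component count is therefore not established; either the chart analysis must be redone for the correct locus $\{S=Z=0\}$ together with a genuine gluing argument, or one should simply cite \cite[Proposition~2.5.4]{HLS1} as the paper does.
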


\begin{proof}
From the local model diagram \ref{LMdiagramKra}, each point of $ \mathcal{Z}^\Kra(\Lambda_0)$ and $\mathcal{Y}^{\rm Kra}(\Lambda_0^\sharp)$ is \'etale locally isomorphic to a point in the naive exceptional divisor $\NExc$. Now, the result follows from \cite[Proposition 2.5.4]{HLS1}.
\end{proof}

The main result in this section is the following. 

\begin{Theorem}\label{Reducedness}
Let $\Lambda_1 $ (resp. $\Lambda_2$) be a vertex lattice of type $2t_1$, (resp. $2t_2$) with $t_2<h<t_1$. We have

a) The moduli functor $ \mathcal{Z}(\Lambda_1)$ is smooth, irreducible and of dimension $t+h$.

b) The moduli functor $\mathcal{Y}(\Lambda_2^{\sharp}) $ is smooth, irreducible and of dimension $ n-t-h-1$.

c) The moduli functor $\calZ(\Lambda_1)\cap \mathcal{Y}(\Lambda^{\sharp}_2)$ is smooth, irreducible and of dimension $ t_1-t_2-1$.
\end{Theorem}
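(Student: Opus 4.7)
The plan is to transfer smoothness, irreducibility, and the stated dimensions from the strata models $\calM^{[2h]}_n(2t)$ to the BT-strata via a local model diagram on the splitting/flat side, extending the construction \eqref{LMdiagram1} of \cite[\S 4]{HLS2} for $\mathcal{Z}^{\rm loc}$ and $\mathcal{Y}^{\rm loc}$ from the local model to the splitting model. Concretely, mirroring $\tilde{\mathcal{Z}}^{\rm loc}(\Lambda)$ and $\tilde{\mathcal{Y}}^{\rm loc}(\Lambda^\sharp)$, I would build enriched moduli functors $\tilde{\mathcal{Z}}(\Lambda_1)$, $\tilde{\mathcal{Y}}(\Lambda_2^\sharp)$ and a version for the intersection, parametrizing tuples $(X,\iota,\lambda,\rho,\mathrm{Fil}^0(X),\mathrm{Fil}^0(X^\vee),f)$ in which $f$ is a trivialization of the de Rham lattice chain compatible with the Hodge and $\mathrm{Fil}^0$ filtrations. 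Forgetting $f$ yields a $\scrG_{[2h,2t],\bar k}$-torsor $\psi_1'$ onto the BT-stratum, while recording the induced filtrations along $f$ produces a smooth morphism $\psi_2'$ to the strata model (or to the intersection of strata models) of the same relative dimension $\dim \scrG_{[2h,2t],\bar k}$. The existence of such a diagram follows from combining \eqref{LMdiagramKra} with the fact that $\calN^\spl_n\subset \calN_n^{\rm Kra}$ is the flat closure, so that $\mathcal{Z}(\Lambda_1)$ and $\mathcal{Y}(\Lambda_2^\sharp)$ are linear modifications of their Kr\"amer counterparts in the sense of \cite[\S 2]{P}, and from the translation between the splitting conditions and the strata-model conditions given immediately before Proposition \ref{prop KraNexc}.

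Next, I would invoke the two descriptions of the strata models. By Theorem \ref{thm splstrata} each $\calM^{[2h]}_n(2t)$ is smooth, and parts (1)--(3) give the target dimensions $t_1+h$, $n-t_2-h-1$ and $t_1-t_2-1$ exactly as in (a), (b), (c). By Proposition \ref{prop 43}, $\calM^{[2h]}_n(2t)$ is the blow-up of the strata local model $\M^{\loc,[2h]}_n(2t)$ at the worst point. Since $\M^{\loc,[2h]}_n(2t)$ is irreducible (\cite[\S 4]{HLS2}), the blow-up at a single point is again irreducible, so $\calM^{[2h]}_n(2t)$ is irreducible; the same blow-up identification applied to the intersection inside $\M^{\loc,[2h]}_n$ gives irreducibility of $\calM^{[2h]}_n(2t_1)\cap\calM^{[2h]}_n(2t_2)$.

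Finally, I descend these properties through the local model diagram. Smoothness of $\psi_2'$ with smooth target makes $\tilde{\mathcal{Z}}(\Lambda_1)$ smooth, and the torsor $\psi_1'$ then transfers smoothness to $\mathcal{Z}(\Lambda_1)$ (whence reducedness); since $\psi_1'$ and $\psi_2'$ have equal relative dimensions, $\dim \mathcal{Z}(\Lambda_1)=\dim \calM^{[2h]}_n(2t_1)=t_1+h$. Because $\scrG_{[2h,2t],\bar k}$ is smooth and connected, both $\psi_1'$ and $\psi_2'$ are smooth with geometrically connected fibers, so irreducibility lifts from $\calM^{[2h]}_n(2t_1)$ up to $\tilde{\mathcal{Z}}(\Lambda_1)$ and descends to $\mathcal{Z}(\Lambda_1)$. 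The identical argument handles $\mathcal{Y}(\Lambda_2^\sharp)$ and the intersection, yielding (a)--(c).

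The main technical obstacle is the clean construction of the splitting-level local model diagram for the intersection $\calZ(\Lambda_1)\cap \mathcal{Y}(\Lambda_2^\sharp)$, where one must simultaneously enforce both the $\calZ$- and $\calY$-isogeny conditions together with the splitting filtrations on a single enriched functor and match them with the conditions cutting out $\calM^{[2h]}_n(2t_1)\cap \calM^{[2h]}_n(2t_2)$ in \S\ref{intersection}, while retaining the $\scrG_{[2h,2t_1,2t_2],\bar k}$-torsor structure and the equality of relative dimensions. Once this is in place, the rest of the argument is a formal descent along smooth morphisms.
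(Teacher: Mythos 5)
Your treatment of smoothness and the dimension formulas is essentially the paper's argument: build the local model diagram relating $\mathcal{Z}(\Lambda_1)$ (resp.\ $\mathcal{Y}(\Lambda_2^\sharp)$, resp.\ the intersection) to the strata model $\calM^{[2h]}_n(2t)$, and read off smoothness and dimension from Theorem \ref{thm splstrata}. That part is fine.

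The irreducibility step, however, has a genuine gap. You assert that $\psi_2'$ is smooth with geometrically connected fibers ``because $\scrG_{[2h,2t],\bar k}$ is smooth and connected,'' and then lift irreducibility from $\calM^{[2h]}_n(2t)$ to $\tilde{\mathcal{Z}}(\Lambda_1)$. But $\psi_2'$ is only smooth of relative dimension $\dim \scrG_{[2h,2t],\bar k}$; its fibers are not $\scrG$-torsors (the $\scrG$-action moves the point of the target, so fibers are not orbits), and nothing forces them to be connected. Indeed, a local model diagram can never detect irreducibility by itself: if $\mathcal{Z}(\Lambda_1)$ were a disjoint union of two copies of a scheme \'etale-locally isomorphic to $\calM^{[2h]}_n(2t)$, one would still have a diagram with $\psi_1'$ a $\scrG$-torsor and $\psi_2'$ smooth of the correct relative dimension. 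So irreducibility of the strata model does not transfer by ``formal descent along smooth morphisms.'' The paper closes this gap differently: since the \'etale-local identification of $\mathcal{Z}(\Lambda_1)$ with $\calM^{[2h]}_n(2t)=\M^{\bl,[2h]}_n(2t)$ is compatible with the blow-up structure and blowing up commutes with \'etale localization, $\mathcal{Z}(\Lambda_1)$ is \emph{globally} the blow-up of $\mathcal{Z}^{\rm loc}(\Lambda_1)$; the latter is an integral scheme by \cite{HLS2}, and the blow-up of an integral scheme is integral, hence irreducible. You would need to add this (or some other global input) to make (a)--(c) complete; your observation that $\calM^{[2h]}_n(2t)$ itself is irreducible is correct but does not suffice.
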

\begin{proof}

By restricting the morphism $\tau$ to the strata model, we have $\tau: \calM^{[2h]}_n(2t)\rightarrow \M^{\loc,[2h]}_n(2t)$, where $\calM^{[2h]}_n(2t)$ is the blow-up of $\M^{\loc,[2h]}_n(2t)$ along the worst point $*$ by Proposition \ref{prop 43}. From the above, we deduce that $\mathcal{Z}(\Lambda_1)$ is a linear modification of $\mathcal{Z}^{\rm loc}(\Lambda_1)$ in the sense of \cite[\S 2]{P} and in particular there is a local model diagram 
 \begin{equation}\label{LMdiagram2}
\begin{tikzcd}
&\tilde{\mathcal{Z}}(\Lambda_1)\arrow[dl, "\psi'_1"']\arrow[dr, "\psi'_2"]  & \\
\mathcal{Z}(\Lambda_1)  &&  \calM^{ [2h]}_n(2t)
\end{tikzcd}.
\end{equation}
From the local model diagram we have that every point of $ \mathcal{Z}(\Lambda_1)$ has an \'etale neighborhood which is also \'etale over the strata model $  \calM^{[2h]}_n(2t)$. Now smoothness and the dimension formula for $ \mathcal{Z}(\Lambda_1)$ follow from Theorem \ref{thm splstrata}. 

Next, recall from \S \ref{BT_strata} that $\mathcal{Z}^{\rm loc}(\Lambda_1)$ and $\mathcal{Z}(\Lambda_1)$ are representable by projective schemes over $\bar{k}$. Since blowing-up commutes with \'etale localization, $\mathcal{Z}(\Lambda_1) $ is the blow-up of $\mathcal{Z}^{\rm loc}(\Lambda_1)$. By \cite{HLS2}, the BT-strata $\mathcal{Z}^{\rm loc}(\Lambda_1)$ is an integral scheme, so that its blow-up $\mathcal{Z}(\Lambda_1)$ is also integral (therefore irreducible). The same argument applies to $\mathcal{Y}(\Lambda_2^{\sharp}) $ and $\calZ(\Lambda_1)\cap \mathcal{Y}(\Lambda^{\sharp}_2)$.
\end{proof}

\quash{
\begin{Proposition}\label{prop KraNexc}
Assume $h$ is not $\pi$-modular. The strata $ \mathcal{Z}(\Lambda_0)(\kappa)=\mathcal{Y}(\Lambda_0^\sharp)(\kappa)$ is the blow-up of $\PP^{n-1}_k$ along $\PP^{2h-1}_k$.
\end{Proposition}
}

\section{Bruhat-Tits stratification}\label{BTstrat.}
In this section, we will define the Bruhat-Tits stratification of the reduced subscheme $\calN_{n, {\rm red}}^\spl$ (the \textit{reduced basic locus}) of the special fiber $\overline{\mathcal{N}}^\spl_n$. 

Let $\kappa$ be any perfect field over $\bar{k}$. Recall that $M = M(X)$ the Dieudonn\'e module of $(X, \iota, \lambda, \rho)\in \calN_n$. We denote by $T_i(M)$ (resp. $T_i(M^\sharp)$) the summation $M+\tau(M)+\cdots +\tau^i(M)$ (resp. $M^\sharp+\tau(M^\sharp)+\cdots +\tau^i(M^\sharp)$). By \cite[Proposition 2.17]{RZbook}, there exists a smallest nonnegative integer $c$ (resp. $d$) such that $T_c(M)$ (resp. $T_d(M^\sharp)$) is $\tau$-invariant. Set $\Lambda_1=T_d(M^\sharp)^\sharp\cap C$, $\Lambda_2=T_c(M)\cap C$. 

\begin{Proposition}\label{prop 51} 
We have $\Lambda_1\otimes W_{O_{F_0}}(\kappa)\subset M\subset \Lambda_2\otimes W_{O_{F_0}}(\kappa)$, and the $W_{O_{F_0}}(\kappa)$-lattices $M$ satisfy one of the following:
\begin{itemize}
    \item (Case $\calZ$)\quad $\Lambda_1\subset C$ is a vertex lattice of type $2t_1\geq 2h$ with 
    \[
    \pi M^\sharp\subset
    \pi \Lambda_1^\sharp\otimes W_{O_{F_0}}(\kappa)\subset
    \Lambda_1\otimes W_{O_{F_0}}(\kappa)\subset
    M \subset
    M^\sharp \subset
    \Lambda_1^\sharp\otimes W_{O_{F_0}}(\kappa),
    \]
 and $\Lambda_1$ is the maximal vertex lattice in $C$ such that $\Lambda_1\otimes W_{O_{F_0}}(\kappa)$ is contained in $M$.
    \item (Case $\calY$)\quad $\Lambda_2\subset C$ is a vertex lattice of type  $2t_2\leq 2h$ with
    \[
    \pi \Lambda_2^\sharp\otimes W_{O_{F_0}}(\kappa)\subset
    \pi M^\sharp\subset
    M\subset
    \Lambda_2\otimes W_{O_{F_0}}(\kappa) \subset
    \Lambda_2^\sharp\otimes W_{O_{F_0}}(\kappa) \subset
    M^\sharp,
    \]
    and $\Lambda_2$ is the minimal vertex lattice in $C$ such that $\Lambda_2\otimes W_{O_{F_0}}(\kappa)$ contains $M$.
\end{itemize}
\end{Proposition}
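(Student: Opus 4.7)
I would proceed in three phases: descend the $\tau$-invariant lattices $T_c(M)$ and $T_d(M^\sharp)$ to $O_F$-lattices in $C$, derive the basic inclusions via $\sharp$-duality and $\Pi$-$\tau$ compatibility, and establish the dichotomy by a case split on whether $T_c(M)\subset M^\sharp$. For the first phase, since $C=N^{\tau=1}$, any $\tau$-invariant $W_{O_{F_0}}(\kappa)$-lattice descends uniquely to an $O_F$-lattice in $C$, and the compatibility $h(\tau x,\tau y)=\sigma(h(x,y))$ guarantees that the $\sharp$-dual of a $\tau$-invariant lattice is again $\tau$-invariant. Hence $T_c(M)=\Lambda_2\otimes W_{O_{F_0}}(\kappa)$ and $T_d(M^\sharp)=\Lambda_1^\sharp\otimes W_{O_{F_0}}(\kappa)$, matching the definitions in the statement; dualizing gives $\Lambda_1\otimes W_{O_{F_0}}(\kappa)=\bigcap_{i=0}^{d}\tau^i(M)$ and $\Lambda_2^\sharp\otimes W_{O_{F_0}}(\kappa)=\bigcap_{i=0}^{c}\tau^i(M^\sharp)$, the largest $\tau$-invariant lattices contained in $M$ and $M^\sharp$ respectively.

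The basic inclusions then follow: $M\subset T_c(M)$ gives $M\subset\Lambda_2\otimes W_{O_{F_0}}(\kappa)$, and dualizing $M^\sharp\subset T_d(M^\sharp)$ yields $\Lambda_1\otimes W_{O_{F_0}}(\kappa)\subset M$. The commutation $\Pi\tau=\tau\Pi$ (which holds because $\Pi$ and $V$ commute on $N$) together with $\Pi M^\sharp\subset M$ gives
\[
\Pi\Lambda_1^\sharp\otimes W_{O_{F_0}}(\kappa)=\sum_{i=0}^{d}\tau^i(\Pi M^\sharp)\subset\sum_{i=0}^{d}\tau^i(M)\subset T_c(M)=\Lambda_2\otimes W_{O_{F_0}}(\kappa),
\]
hence $\Pi\Lambda_1^\sharp\subset\Lambda_2$, and dually $\Pi\Lambda_2^\sharp\subset\Lambda_1$. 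A length count using that each step $T_i(M)/T_{i-1}(M)$ has length exactly $1$ (by minimality of $c$, and similarly for $d$) gives $\dim_\kappa(M/\Lambda_1\otimes W_{O_{F_0}}(\kappa))=d$ and $\dim_\kappa(\Lambda_2\otimes W_{O_{F_0}}(\kappa)/M)=c$.

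For the dichotomy, the identifications yield clean criteria: $\Lambda_2$ is a vertex lattice iff $\Lambda_2\subset\Lambda_2^\sharp$ iff $T_c(M)\subset M^\sharp$; and $\Lambda_1$ is a vertex lattice iff $\Pi\Lambda_1^\sharp\subset\Lambda_1$ iff $\Pi T_d(M^\sharp)\subset M$ (using that $\Pi T_d(M^\sharp)$ is $\tau$-invariant and $\Lambda_1\otimes W_{O_{F_0}}(\kappa)$ is the largest $\tau$-invariant lattice contained in $M$). I split on whether $T_c(M)\subset M^\sharp$. If yes, case $\calY$ holds; the type bound $2t_2\leq 2h$ follows from the chain $M\subset\Lambda_2\otimes W\subset\Lambda_2^\sharp\otimes W\subset M^\sharp$ together with $\dim_\kappa(M^\sharp/M)=2h$. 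If instead $T_c(M)\not\subset M^\sharp$, case $\calZ$ holds: the sandwich $\Pi M^\sharp\subset M\overset{2h}{\subset}M^\sharp\subset\Pi^{-1}M$ and the step-length-$\le 1$ growth of the $\tau$-filtration will be used to force $\Pi T_d(M^\sharp)\subset M$ by a symmetric argument interchanging $M$ with $\Pi^{-1}M^\sharp$ under $\sharp$-duality; the type bound $2t_1=2h+2d\geq 2h$ comes from the length chain $\Lambda_1\subset M\subset M^\sharp\subset\Lambda_1^\sharp$. The extremality claims follow from the minimality of $T_d(M^\sharp)$ among $\tau$-invariant lattices containing $M^\sharp$ (for the maximality of $\Lambda_1$) and dually for $\Lambda_2$.

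The main obstacle is the non-trivial half of the dichotomy---showing that failure of $T_c(M)\subset M^\sharp$ forces $\Pi T_d(M^\sharp)\subset M$. I would attack this by exploiting the duality-symmetric nature of the two conditions under $\sharp$ together with the structural constraint $M\overset{\le 1}{\subset}M+\tau(M)$, tracking how any ``overflow'' of the $\tau$-orbit of $M$ beyond $M^\sharp$ is balanced by the corresponding containment information for $\Pi$ applied to the $\tau$-orbit of $M^\sharp$ inside $M$.
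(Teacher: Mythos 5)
Your proposal is an outline rather than a proof: everything you actually carry out is the routine part, and the one step that constitutes the mathematical content of the proposition is left open. Concretely, your reductions are all correct and well organized --- the descent of $\tau$-invariant lattices to $C$, the identifications $T_c(M)=\Lambda_2\otimes W_{O_{F_0}}(\kappa)$ and $T_d(M^\sharp)^\sharp=\Lambda_1\otimes W_{O_{F_0}}(\kappa)=\bigcap_i\tau^i(M)$, the inclusions $\Lambda_1\otimes W\subset M\subset\Lambda_2\otimes W$, the equivalences ``$\Lambda_2$ is a vertex lattice $\iff T_c(M)\subset M^\sharp$'' and ``$\Lambda_1$ is a vertex lattice $\iff \Pi T_d(M^\sharp)\subset M$'', the extremality statements, and the type bounds $2t_1=2h+2d\ge 2h$, $2t_2\le 2h$. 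But after these reductions the proposition *is* the dichotomy ``either $T_c(M)\subset M^\sharp$ or $\Pi T_d(M^\sharp)\subset M$'', and for that you offer only a strategy (``exploiting the duality-symmetric nature \dots tracking how any overflow \dots is balanced''), which you yourself flag as the main obstacle. Nothing in what you wrote rules out the possibility that both containments fail simultaneously; deriving a contradiction from $T_c(M)\not\subset M^\sharp$ and $\Pi T_d(M^\sharp)\not\subset M$ requires a genuine induction along the $\tau$-filtration, using the length-one steps $T_i\overset{\le 1}{\subset}T_{i+1}$ on both $M$ and $M^\sharp$ together with $\Pi M^\sharp\subset M\overset{2h}{\subset}M^\sharp$ to control where the hermitian form first fails to be integral. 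That analysis is where all the work lies, and it is absent.

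For comparison: the paper does not prove this proposition internally either --- it cites \cite[Proposition 5.3]{HLS2} --- and the step you left open is precisely the part that reference establishes (by an inductive integrality argument on the successive quotients of the $\tau$-filtration, in the spirit of \cite[\S 4]{RTW}). So your framework is compatible with the source proof, but as submitted the argument has a gap at its central claim and cannot be accepted as a proof.
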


\begin{proof}
See \cite[Proposition 5.3]{HLS2}.
\end{proof}

Recall that $L_{\mathcal{Z}}$ (resp. $L_{\mathcal{Y}}$) denotes the set of all vertex lattices in $C$ of type $\geq 2h$ (resp. $\leq 2h$). Let's review  the Bruhat-Tits stratification of the reduced subscheme $\calN_{n, {\rm red}}$ of the special fiber $\overline{\mathcal{N}}_n$.

\begin{Theorem}\label{BTStratLoc}
The reduced locus $\calN_{n, {\rm red}}$ is the union of closed subvarieties:
\begin{equation}\label{BT_strataLoc}
    \calN_{n, {\rm red}} = \left( \bigcup_{\Lambda_1 \in L_{\mathcal{Z}} }\mathcal{Z}^{\rm loc}(\Lambda_1) \right) \cup \left( \bigcup_{\Lambda_2 \in L_{\mathcal{Y}}} \mathcal{Y}^{\rm loc}(\Lambda_2^\sharp) \right).
\end{equation}
These strata satisfy the following inclusion relations:
    \begin{enumerate}
        \item For any \(\Lambda_1, \Lambda'_1 \in L_{\mathcal{Z}}\), \(\mathcal{Z}^{\rm loc}(\Lambda_1) \subseteq \mathcal{Z}^{\rm loc}(\Lambda'_1)\) if and only if \(\Lambda_1 \supseteq \Lambda'_1\),
        \item For any \(\Lambda_2, \Lambda'_2 \in L_{\mathcal{Y}}\), \(\mathcal{Y}^{\rm loc}(\Lambda_2^\sharp) \subseteq \mathcal{Y}^{\rm loc}((\Lambda'_2)^\sharp)\) if and only if \(\Lambda_2 \subseteq \Lambda'_2\).
    \item For any \(\Lambda_1 \in L_{\mathcal{Z}}\) and \(\Lambda_2 \in L_{\mathcal{Y}}\), the intersection \(\mathcal{Z}^{\rm loc}(\Lambda_1) \cap \mathcal{Y}^{\rm loc}(\Lambda_2^\sharp)\) is non-empty if and only if \(\Lambda_1 \subseteq \Lambda_2\).
    \item For \(\Lambda_0 \in L_{\mathcal{Z}} \cap L_{\mathcal{Y}}\) (i.e., \(\Lambda_0\) is a vertex lattice of type \(2h\)), the set \(\mathcal{Z}^{\rm loc}(\Lambda) = \mathcal{Y}^{\rm loc}(\Lambda^\sharp)\) is a singleton, corresponding to a discrete point in the RZ space. 
    \item The intersection $\mathcal{Z}^{\rm loc}(\Lambda) \cap \mathcal{Z}^{\rm loc}(\Lambda')$ (resp. $\mathcal{Y}^{\rm loc}(\Lambda^\sharp) \cap \mathcal{Y}^{\rm loc}(\Lambda^{\prime \sharp)}$) is non-empty if and only if $\Lambda'' = \Lambda + \Lambda'$ (resp. $\Lambda''=\Lambda \cap \Lambda'$) is a vertex lattice; in which case we have $\mathcal{Z}^{\rm loc}(\Lambda) \cap \mathcal{Z}^{\rm loc}(\Lambda') = \mathcal{Z}^{\rm loc}(\Lambda'')$ (resp. $\mathcal{Y}^{\rm loc}(\Lambda^\sharp) \cap \mathcal{Y}^{\rm loc}(\Lambda^{\prime \sharp}) = \mathcal{Y}^{\rm loc}(\Lambda^{\prime \prime \sharp})$).
\end{enumerate}
\end{Theorem}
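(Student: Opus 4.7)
Since every assertion in the theorem ultimately concerns Dieudonné lattices, my plan is to work at the level of $W_{O_{F_0}}(\kappa)$-lattices via Proposition~\ref{DLoc}, substituting in the lattice descriptions of the strata from Proposition~\ref{DieudLatt}. The backbone of the argument is Proposition~\ref{prop 51}: for any Dieudonné lattice $M$ representing a point of $\mathcal{N}_{n,\mathrm{red}}(\kappa)$, that proposition places $M$ either in case~$\mathcal{Z}$ (with a maximal enclosed vertex lattice $\Lambda_1 \in L_{\mathcal{Z}}$ realizing the chain $\Lambda_1 \otimes W_{O_{F_0}}(\kappa) \subset M \subset M^\sharp \subset \Lambda_1^\sharp \otimes W_{O_{F_0}}(\kappa)$) or in case~$\mathcal{Y}$ (with a minimal enclosing $\Lambda_2 \in L_{\mathcal{Y}}$ realizing the $\pi$-dual chain). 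Reading the first possibility as $M \in \mathcal{Z}^{\rm loc}(\Lambda_1)(\kappa)$ and the second as $M \in \mathcal{Y}^{\rm loc}(\Lambda_2^\sharp)(\kappa)$ via Proposition~\ref{DieudLatt} yields the union decomposition \eqref{BT_strataLoc} on $\kappa$-points, and the reverse containment is immediate.

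For the inclusion relations (1)--(2), the forward direction is pure $\sharp$-duality: if $\Lambda_1 \supseteq \Lambda_1'$ in $L_{\mathcal{Z}}$, then $(\Lambda_1')^\sharp \supseteq \Lambda_1^\sharp$, so the chain defining $\mathcal{Z}^{\rm loc}(\Lambda_1)$ is contained in the chain defining $\mathcal{Z}^{\rm loc}(\Lambda_1')$. The converse is extracted from the maximality clause of Proposition~\ref{prop 51} applied to a point of $\mathcal{Z}^{\rm loc}(\Lambda_1)$ whose enclosed vertex lattice is exactly $\Lambda_1$; the $\mathcal{Y}$-statement is symmetric via $\pi$-duality. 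For the mixed intersection~(3), any $M \in \mathcal{Z}^{\rm loc}(\Lambda_1) \cap \mathcal{Y}^{\rm loc}(\Lambda_2^\sharp)$ satisfies both chains, whose concatenation gives $\Lambda_1 \otimes W_{O_{F_0}}(\kappa) \subset M \subset \Lambda_2 \otimes W_{O_{F_0}}(\kappa)$, hence $\Lambda_1 \subseteq \Lambda_2$; conversely, when $\Lambda_1 \subseteq \Lambda_2$ an explicit point in the intersection is produced by a chain of rank-one modifications between $\Lambda_1 \otimes W_{O_{F_0}}(\kappa)$ and $\Lambda_2 \otimes W_{O_{F_0}}(\kappa)$ compatible with $\tau$. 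Item~(4) is a length count: for $\Lambda_0$ of type $2h$, the bracketing $\Lambda_0 \otimes W_{O_{F_0}}(\kappa) \subset M \subset M^\sharp \subset \Lambda_0^\sharp \otimes W_{O_{F_0}}(\kappa)$ together with $\mathrm{length}(M^\sharp / M) = 2h = \mathrm{length}(\Lambda_0^\sharp/\Lambda_0)$ pinches $M = \Lambda_0 \otimes W_{O_{F_0}}(\kappa)$.

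The main obstacle is (5). The easier implication reads off Proposition~\ref{DieudLatt}: if $\Lambda'' := \Lambda + \Lambda'$ is already a vertex lattice then $(\Lambda'')^\sharp = \Lambda^\sharp \cap (\Lambda')^\sharp$, so a lattice $M$ satisfies the defining chains for both $\mathcal{Z}^{\rm loc}(\Lambda)$ and $\mathcal{Z}^{\rm loc}(\Lambda')$ if and only if it satisfies the chain for $\mathcal{Z}^{\rm loc}(\Lambda'')$, whence the claimed equality. The delicate direction is that a non-empty intersection must force $\Lambda + \Lambda'$ itself to satisfy the vertex inequality $\pi(\Lambda+\Lambda')^\sharp \subset \Lambda + \Lambda'$. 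My plan is to pick $M$ in the intersection, combine the bracketing
\[
(\Lambda + \Lambda') \otimes W_{O_{F_0}}(\kappa) \subset M \subset M^\sharp \subset (\Lambda+\Lambda')^\sharp \otimes W_{O_{F_0}}(\kappa)
\]
with $\Pi M^\sharp \subset M$, then intersect with $C$ and invoke the maximality clause of Proposition~\ref{prop 51} to promote $\Lambda + \Lambda'$ to the vertex lattice it must equal; a rank/type count rules out any proper enlargement. The $\mathcal{Y}$-case is the dual argument obtained by replacing sums with intersections and applying $\sharp$-duality throughout.
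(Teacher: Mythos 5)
Your treatment of item (5) is essentially the paper's own argument: the paper also picks $M$ in the intersection, invokes Proposition~\ref{prop 51} to get the maximal vertex lattice $\Lambda(M)\supset\Lambda+\Lambda'$, and reads off the vertex property from the sandwich $\Lambda+\Lambda'\subset\Lambda(M)\subset\Lambda(M)^\sharp\subset(\Lambda+\Lambda')^\sharp$. (One small repair: integrality of $\Lambda+\Lambda'$ comes from this sandwich, but the condition $\pi(\Lambda+\Lambda')^\sharp\subset\Lambda+\Lambda'$ does not follow from ``$\Pi M^\sharp\subset M$'' as you suggest --- $M^\sharp$ sits \emph{inside} $(\Lambda+\Lambda')^\sharp\otimes W_{O_{F_0}}(\kappa)$, so nothing propagates outward; the correct one-line argument is $\pi(\Lambda+\Lambda')^\sharp\subset\pi\Lambda^\sharp\subset\Lambda\subset\Lambda+\Lambda'$, using only that $\Lambda$ is a vertex lattice. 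Also, you do not need $\Lambda+\Lambda'$ to \emph{equal} $\Lambda(M)$, only to be a vertex lattice, so the ``rank/type count ruling out proper enlargement'' is superfluous.)

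The genuine gap is in your reproof of the decomposition and of relations (1)--(4), which the paper does not prove but cites from \cite[Theorem 5.5]{HLS2}. All of your ``only if'' directions are fine, as is the covering of $\calN_{n,\mathrm{red}}(\kappa)$ via Proposition~\ref{prop 51}. But every existence direction is asserted rather than proved: the converse of (1) and (2) requires a point of $\mathcal{Z}^{\rm loc}(\Lambda_1)$ whose maximal enclosed vertex lattice is \emph{exactly} $\Lambda_1$ (i.e.\ non-emptiness of the open stratum $\mathcal{Z}^{\rm loc}(\Lambda_1)\setminus\bigcup_{\Lambda\supsetneq\Lambda_1}\mathcal{Z}^{\rm loc}(\Lambda)$), and the ``if'' direction of (3) requires exhibiting a lattice $M$ with $\Lambda_1\otimes W_{O_{F_0}}(\kappa)\subset M\subset\Lambda_2\otimes W_{O_{F_0}}(\kappa)$ satisfying \emph{all} conditions of Proposition~\ref{DLoc}, in particular $M\mathrel{\overset{\scriptstyle\leq 1}{\subset}}M+\tau(M)$ and $\Pi M^\sharp\subset M\mathrel{\overset{\scriptstyle 2h}{\subset}}M^\sharp$. ``A chain of rank-one modifications compatible with $\tau$'' is not a construction: arranging $\tau$-compatibility and the correct polarization index simultaneously is precisely the nontrivial content of \cite{HLS2}, where these non-emptiness statements are established through the identification of the strata with (generalized) Deligne--Lusztig varieties. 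None of this can be extracted from Propositions~\ref{DLoc} and \ref{DieudLatt} alone, so as written your proof of (1)--(3) is incomplete; either carry out the explicit lattice constructions or cite \cite[Theorem 5.5]{HLS2} as the paper does.
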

\begin{proof}
For the stratification (\ref{BT_strataLoc}) and the relations (1)-(4) see \cite[Theorem 5.5]{HLS2}. For (5), assume $\mathcal{Z}^{\rm loc}(\Lambda) \cap \mathcal{Z}^{\rm loc}(\Lambda') \neq \varnothing$ and let $ M \in \mathcal{Z}^{\rm loc}(\Lambda) \cap \mathcal{Z}^{\rm loc}(\Lambda') (\kappa) $. Then, $ \Lambda+\Lambda' \subset \Lambda(M) $ where $\Lambda(M)$ is the maximal vertex lattice contained in $M$ given by Proposition \ref{prop 51}. Then, $ \Lambda+\Lambda' \subset \Lambda(M) \subset (\Lambda(M))^{\sharp} \subset (\Lambda+\Lambda' )^{\sharp} $ and so $\Lambda'' = \Lambda + \Lambda'$ is a vertex lattice. A similar argument works for the $\mathcal{Y}^{\rm loc}$-strata.
\end{proof}

\begin{Theorem}\label{Thm BT}
The Bruhat-Tits stratification of the reduced basic locus of splitting {\rm RZ} space $\calN_{n, {\rm red}}^\spl$ is
\begin{equation}\label{BTstr}
      \calN_{n, {\rm red}}^\spl = \left( \bigcup_{\Lambda_1 \in L_{\mathcal{Z}} }\mathcal{Z}(\Lambda_1) \right) \cup \left( \bigcup_{\Lambda_2 \in L_{\mathcal{Y}}} \mathcal{Y}(\Lambda_2^\sharp) \right).
  \end{equation}
\begin{enumerate}
  \item  
   These strata satisfy the following inclusion relations:
  \begin{itemize}
    \item[(i)] For any $\Lambda_1, \Lambda_2 \in L_{\mathcal{Z}}$ of type greater than $2h$, if 
    $  \Lambda_1 \subseteq \Lambda_2$ then $\mathcal{Z}(\Lambda_2) \subseteq \mathcal{Z}(\Lambda_1)$.    
    \item[(ii)] For any $\Lambda_1, \Lambda_2 \in  L_{\mathcal{Y}}$ of type less than $2h$, if  
    $  \Lambda_1 \subseteq \Lambda_2$ then $\mathcal{Y}(\Lambda^{\sharp}_1) \subseteq \mathcal{Y}(\Lambda^{\sharp}_2)$.  

    \item[(iii)] For any $\Lambda_1\in L_{\mathcal{Z}}$ of type greater than $2h$, $\Lambda_2 \in  L_{\mathcal{Y}}$ of type less than $2h$, $  \Lambda_1 \subseteq \Lambda_2$ if and only if the intersection $\mathcal{Z}(\Lambda_1) \cap \mathcal{Y}(\Lambda_2^\sharp)$ is non-empty.
  \end{itemize}

  \item In the following, assume that $\Lambda, \Lambda'$ are vertex lattices of type $ 2t$ with $t \neq h$, and $\Lambda_0, \Lambda'_0$ are vertex lattices of type $2t$ with $ t=h$. 
  \begin{itemize}
    \item[(i)] The intersection $\mathcal{Z}(\Lambda) \cap \mathcal{Z}(\Lambda')$ (resp. $\mathcal{Y}(\Lambda^\sharp) \cap \mathcal{Y}(\Lambda^{\prime \sharp)}$) is non-empty if and only if $\Lambda'' = \Lambda + \Lambda'$ (resp. $\Lambda''=\Lambda \cap \Lambda'$) is a vertex lattice; in which case we have $\mathcal{Z}(\Lambda) \cap \mathcal{Z}(\Lambda') = \mathcal{Z}(\Lambda'')$ (resp. $\mathcal{Y}(\Lambda^\sharp) \cap \mathcal{Y}(\Lambda^{\prime \sharp}) = \mathcal{Y}(\Lambda^{\prime \prime \sharp})$).

    \item[(ii)] The intersection $\mathcal{Z}(\Lambda_0) \cap \mathcal{Z}(\Lambda_0')$ (or $\mathcal{Y}(\Lambda_0^\sharp) \cap \mathcal{Y}(\Lambda_0^{\prime \sharp})$) is always empty if $\Lambda_0 \ne \Lambda'_0$.
    
    \item[(iii)] The intersection $\mathcal{Z}(\Lambda) \cap \mathcal{Z}(\Lambda_0)$ (resp.  $\mathcal{Y}(\Lambda^\sharp) \cap \mathcal{Y}(\Lambda_0^\sharp)$) is non-empty if and only if $\Lambda \subset \Lambda_0$ (resp. $\Lambda_0 \subset \Lambda$), in which case $\mathcal{Z}(\Lambda) \cap \mathcal{Z}(\Lambda_0)$ (resp.  $\mathcal{Y}(\Lambda^\sharp) \cap \mathcal{Y}(\Lambda_0^\sharp)$) is isomorphic to $\PP_k^{t-h-1}\times \PP_k^{2h}$ (resp. $\PP_k^{h-t-1}\times \PP_k^{n-2h-1}$).
    
    \item[(iv)] The BT-strata $\mathcal{Z}(\Lambda_0)$ and $\mathcal{Y}(\Lambda_0^\sharp)$ are each isomorphic to the blow-up of $\PP_k^{n-1}$ along $\PP_k^{2h-1}$ which we denote by $\rm{Bl}_{\PP_k^{2h-1}}(\PP_k^{n-1})$.
   \end{itemize}
\end{enumerate}
\end{Theorem}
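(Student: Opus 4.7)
The plan is to bootstrap the theorem from Theorem~\ref{BTStratLoc} by base change along the forgetful morphism $\phi: \overline{\calN}_n^\spl \to \overline{\calN}_n$, and to identify the ``new'' fibers that $\phi$ creates over the worst points through the blow-up description of the splitting model established in Section~\ref{BLupsection}. The key observation is that, by the definition in (\ref{eq ZYstrata}), both $\calZ(\Lambda_1) = \calZ^\loc(\Lambda_1) \times_{\overline{\calN}_n} \overline{\calN}_n^\spl$ and $\calY(\Lambda_2^\sharp) = \calY^\loc(\Lambda_2^\sharp) \times_{\overline{\calN}_n} \overline{\calN}_n^\spl$ are pullbacks from the local stratification, so set-theoretic properties transfer along $\phi$.

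First, I would establish the stratification (\ref{BTstr}): since $\phi$ is surjective on the underlying reduced schemes (via Theorem~\ref{thm 3.6}, which presents the local model map étale-locally as a blow-up centered at worst points), pulling back (\ref{BT_strataLoc}) along $\phi$ yields the decomposition. The inclusion relations (1)(i)--(iii) and the intersection formula (2)(i) then follow by pulling back the corresponding items in Theorem~\ref{BTStratLoc}(1)--(3) and (5), using that fiber products commute with scheme-theoretic intersections:
\[
\calZ(\Lambda) \cap \calZ(\Lambda') \;=\; \bigl(\calZ^\loc(\Lambda) \cap \calZ^\loc(\Lambda')\bigr) \times_{\overline{\calN}_n} \overline{\calN}_n^\spl.
\]
For (2)(ii), Theorem~\ref{BTStratLoc}(4) identifies $\calZ^\loc(\Lambda_0)$ with the singleton worst point $*_{\Lambda_0}$, and distinct vertex lattices of type $2h$ give distinct worst points, so the intersection is empty downstairs and hence in $\overline{\calN}_n^\spl$.

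The substantive content is in (2)(iii) and (iv), which require analyzing the fibers of $\phi$ over worst points. Since $\calZ^\loc(\Lambda_0) = \calY^\loc(\Lambda_0^\sharp) = \{*_{\Lambda_0}\}$, (\ref{eq ZYstrata}) identifies $\calZ(\Lambda_0) = \calY(\Lambda_0^\sharp)$ with the reduced fiber $\phi^{-1}(*_{\Lambda_0})_{\mathrm{red}}$. By Theorem~\ref{thm 3.6} and the local model diagram, $\phi$ is étale-locally the blow-up morphism $\M^\spl_n \to \M^\loc_n$, and Theorem~\ref{thm 3.6}(a) describes its exceptional divisor as $\mathrm{Bl}_{\PP_k^{2h-1}}(\PP_k^{n-1})$, proving (iv). For (iii), given $\Lambda$ of type $2t \neq 2h$ with $\Lambda \subset \Lambda_0$, Theorem~\ref{BTStratLoc}(1) gives $\calZ^\loc(\Lambda_0) \subset \calZ^\loc(\Lambda)$, so $\calZ(\Lambda) \cap \calZ(\Lambda_0) = \phi^{-1}(*_{\Lambda_0}) \cap \calZ(\Lambda)$ is étale-locally the exceptional divisor of the blow-up of the strata local model $\M^\loc_n(2t)$ at $*$; Proposition~\ref{prop Zbl} computes this as $\PP_k^{t-h-1}\times \PP_k^{2h}$, and Proposition~\ref{prop Ybl} handles the $\calY$ case analogously. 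The converse implication (non-empty intersection forces $\Lambda \subset \Lambda_0$) follows by pushing forward via $\phi$ and invoking Theorem~\ref{BTStratLoc}(1).

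The main obstacle will be justifying rigorously the passage from the local-model computations of Propositions~\ref{prop Zbl}--\ref{prop Ybl} to the corresponding fibers of $\phi|_{\calZ(\Lambda)}$ and $\phi|_{\calY(\Lambda^\sharp)}$. Concretely, one must combine Proposition~\ref{prop 43}, which identifies the strata model with the blow-up of the strata local model, with the smoothness of the morphisms $\psi_1', \psi_2'$ in the local-model diagram (\ref{LMdiagram2}), to transfer the explicit $\PP^{t-h-1}\times\PP^{2h}$ (respectively $\PP^{h-t-1}\times\PP^{n-2h-1}$) description of the exceptional divisor from the strata model to the BT stratum intersection itself. The remaining bookkeeping---tracking types, signs, and which vertex lattice appears on which side of $2h$---is routine, but must be threaded carefully through the statement in order to cover all the cases uniformly.
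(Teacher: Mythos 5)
Your proposal is correct and follows the paper's strategy almost step for step: the stratification and items (1)(i)--(iii), (2)(ii) are bootstrapped from Theorem~\ref{BTStratLoc} via the pullback definition (\ref{eq ZYstrata}) and the surjectivity of $\phi$, while (2)(iii)--(iv) are obtained by identifying the fibers over the worst points with exceptional divisors of blow-ups through Theorem~\ref{thm 3.6}, Proposition~\ref{prop 43}, the local model diagram, and Propositions~\ref{prop Zbl}--\ref{prop Ybl}. The one place where your route genuinely differs is (2)(i): you intersect the pullbacks directly and invoke Theorem~\ref{BTStratLoc}(5) together with the compatibility of fiber products with intersections, whereas the paper first establishes the intersection formula for the Kr\"amer strata $\calZ^{\Kra}(\Lambda)$, $\calY^{\Kra}(\Lambda^{\sharp})$ by a lattice computation using the maximal/minimal vertex lattices of Proposition~\ref{prop 51}, and only then base changes along the closed immersion $\calN^{\spl}_n\hookrightarrow\calN^{\Kra}_n$. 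Your shortcut is legitimate and shorter, since Theorem~\ref{BTStratLoc}(5) already encapsulates the needed lattice argument; what the paper's detour buys is the analogous statement for the Kr\"amer strata themselves, which is not required for the theorem as stated. The only point requiring care, in (2)(iii)--(iv), is that an \'etale-local identification with the exceptional divisor does not by itself produce a global isomorphism with $\PP_k^{t-h-1}\times\PP_k^{2h}$, $\PP_k^{h-t-1}\times\PP_k^{n-2h-1}$, or ${\rm Bl}_{\PP_k^{2h-1}}(\PP_k^{n-1})$; one must use that the fiber of $\phi$ over the single closed point $\calZ^{\loc}(\Lambda_0)$ is globally the exceptional divisor of the blow-up. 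You flag exactly this issue and indicate the correct resolution via Proposition~\ref{prop 43} and the smoothness of $\psi_1',\psi_2'$, which is no less detailed than the paper's own treatment.
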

\begin{proof}
The stratification (\ref{BTstr}) and the inclusion relations (1.i) and (1.ii) follow from Theorem \ref{BTStratLoc} together with the constructions of the RZ space $\calN_{n, {\rm red}}^\spl$ and the strict transforms $ \mathcal{Z}(\Lambda_1),\, \mathcal{Y}(\Lambda_2^\sharp) $.

(1.iii). If $\mathcal{Z}(\Lambda_1) \cap \mathcal{Y}(\Lambda_2^\sharp) \neq \varnothing$ then $\mathcal{Z}^{\rm loc}(\Lambda_1) \cap \mathcal{Y}^{\rm loc}(\Lambda_2^\sharp) \neq \varnothing$ and so $\Lambda_1 \subseteq \Lambda_2$ by Theorem \ref{BTStratLoc}. Conversely, suppose $\Lambda_1\subset \Lambda_2$ which gives $\mathcal{Z}^{\rm loc}(\Lambda_1) \cap \mathcal{Y}^{\rm loc}(\Lambda_2^\sharp)$ is non-empty. This in turn gives $\M^{\loc, [2h]}_n (2t_1)\cap \M^{\loc, [2h]}_n (2t_2) \neq \varnothing$. By calculating the strict transform and using the local model diagram we can conclude that $\mathcal{Z}(\Lambda_1) \cap \mathcal{Y}(\Lambda_2^\sharp) \neq \varnothing$.

(2.i). First, we show the statement for the $\mathcal{Z}^{\rm Kra}$ and $\mathcal{Y}^{\rm Kra}$-strata. For the $\mathcal{Z}^{\rm Kra}$-strata, if we assume that $\Lambda'' = \Lambda+\Lambda'$ is a vertex lattice, then we can easily see that $\mathcal{Z}^{\rm Kra}(\Lambda) \cap \mathcal{Z}^{\rm Kra}(\Lambda') = \mathcal{Z}^{\rm Kra}(\Lambda'')$ by construction. On the other hand, if we assume that $\mathcal{Z}^{\rm Kra}(\Lambda) \cap \mathcal{Z}^{\rm Kra}(\Lambda')$ is nonempty and pick $ (M,M',M'') \in  \mathcal{Z}^{\rm Kra}(\Lambda) \cap \mathcal{Z}^{\rm Kra}(\Lambda')(\kappa)$, then $\Lambda_1 \supset \Lambda + \Lambda'$ where $ \Lambda_1$ is the maximal vertex lattice contained in $M$ from Proposition \ref{prop 51}. Then $\Lambda + \Lambda' \subset \Lambda_1 \subset \Lambda_1^\sharp \subset \Lambda^\sharp \cap (\Lambda')^\sharp = (\Lambda + \Lambda')^\sharp$. Similarly, we have $\pi(\Lambda+\Lambda')^\sharp\subset (\Lambda+\Lambda')$. Hence, $\Lambda''=\Lambda + \Lambda'$ is a vertex lattice. For $\mathcal{Y}^{\rm Kra}$-strata, note that $ (M,M',M'') \in  \mathcal{Y}^{\rm Kra}(\Lambda^{\sharp}) \cap \mathcal{Y}^{\rm Kra}((\Lambda')^{\sharp})(\kappa)$ gives  $\Lambda_2\subset \Lambda\cap \Lambda'\subset (\Lambda^\sharp+\Lambda')^\sharp\subset \Lambda_2^\sharp$ by the minimality of $\Lambda_2$ from Proposition \ref{prop 51}. Then, arguing as in the case of the $\mathcal{Z}^{\rm Kra}$-strata, we obtain the desired result.

Now, using the above, 
consider the closed immersion $i: \mathcal{N}^\spl_n \xrightarrow{} \mathcal{N}_n^{\rm Kra} $. The base change along $i$ gives:
\begin{flalign*}
\mathcal{Z}(\Lambda'') &= \mathcal{Z}^{\rm Kra}(\Lambda'')\times_{\mathcal{N}_n^{\rm Kra}} \mathcal{N}^\spl_n  =  \left(\mathcal{Z}^{\rm Kra}(\Lambda')\times_{\mathcal{N}_n^{\rm Kra}}\mathcal{Z}^{\rm Kra}(\Lambda)\right)\times_{\mathcal{N}_n^{\rm Kra}} \mathcal{N}^\spl_n \\
&=  \left(\mathcal{Z}^{\rm Kra}(\Lambda')\times_{\mathcal{N}_n^{\rm Kra}}\mathcal{N}^\spl_n\right)\times_{\mathcal{N}^\spl_n} \left(\mathcal{Z}^{\rm Kra}(\Lambda)\times_{\mathcal{N}_n^{\rm Kra}}\mathcal{N}^\spl_n\right) \\
&= \mathcal{Z}(\Lambda)\cap \mathcal{Z}(\Lambda').   
\end{flalign*}
A similar argument works for the $\calY$-strata.

(2.ii), (2.iv). Similar to Proposition \ref{prop KraNexc}, each point of $ \mathcal{Z}(\Lambda_0)$ and $\mathcal{Y}(\Lambda_0^\sharp)$ is \'etale locally isomorphic to a point in the exceptional divisor $\Exc$. Then (2.ii) follows directly from the definitions and the constructions of $ \mathcal{Z}(\Lambda_0)$ and $ \mathcal{Y}(\Lambda_0^\sharp)$; (2.iv) follows from Theorem \ref{thm 3.6}.

(2.iii). Using the local model diagram, it amounts to calculating the exceptional locus of $\M^{\loc, [2h]}_n (2t) $ for the corresponding strata. Now the result follows from Propositions \ref{prop Zbl} and \ref{prop Ybl}.
\end{proof}

\section{Appendix}
The goal of this section is to discuss the relationship between the splitting-model variants of \cite{HLS1} and \cite{ZacZhao2}. We have already defined the naive splitting model $\M^{\nspl,[2h]}_n$, and the splitting model $\M^{\spl,[2h]}_n$ which is the scheme-theoretic closure of the generic fiber in  $\M^{\nspl,[2h]}_n$. These models were first studied in \cite{HLS1}.
We refer to \S\ref{RSpl} for their definitions and basic properties. Next, we briefly recall the splitting models defined in \cite{ZacZhao2} and their basic properties; see \S \ref{RSpl} for any undefined terms.

\begin{Definition}
{\rm
The splitting model $\mathscr{M}^{\spl,[2h]}_n$ of \cite{ZacZhao2} is a projective scheme over $\Spec O_F$ representing the functor that sends each $O_F$-algebra $R$ to the set of subsheaves 
\[
\begin{array}{l}
   \calF_i \subset  \Lambda_{i,R}, \quad ~\text{where}~ i\in [2h]  \\
   \calG_j \subset  \calF_j, \quad ~\text{where}~ j\in \{-h\}+n\ZZ
\end{array}
\]
such that
\begin{itemize}
    \item For all $i\in [2h], j\in \{-h\}+n\ZZ$, $\calF_i$ (resp. $\calG_j$) as $O_F\otimes R$-modules are Zariski locally on $R$ direct summands of rank $n$ (resp. rank $1$).
    \item For all $i\in [2h]$, $(\calF_i)\in \M^{[2h],\wedge}_n\otimes R$.
    \item (Splitting condition)  For all $j\in \{-h\}+n\ZZ$,
    \[
    (\Pi+\pi) \calF_{j} \subset \calG_{j}, \quad  (\Pi-\pi) \calG_{j} = (0).
    \]
\end{itemize}
}
\end{Definition}
There is a projective morphism  $\tau' : \mathscr{M}^{\spl,[2h]}_n \rightarrow \M^{\loc,[2h]}_n$ which is given by $(\calF_i, \calG_{j}) \mapsto (\calF_i)$ on $R$-valued points. Similar to the porjective morphism $\tau$, the morphism $\tau'$ induces an isomorphism 
\[
\mathscr{M}^{\spl,[2h]}_n\setminus (\tau')^{-1}(*) \cong  \M^{\loc,[2h]}_n\setminus\{*\},
\]
where $*$ is the worst point in the local model. From the above construction, we deduce that there is a forgetful projective morphism $\psi:  \M^{\spl,[2h]}_n \rightarrow \mathscr{M}^{\spl,[2h]}_n$ such that the following diagram is commutative: 
\begin{equation}\label{Rlt601}
\begin{tikzcd}
\M^{\spl,[2h]}_n \arrow{r}{\psi} \arrow[swap]{dr}{\tau} & 
\mathscr{M}^{\spl,[2h]}_n \arrow{d}{\tau'} \\
& \M^{\loc,[2h]}_n 
\end{tikzcd}.
\end{equation}
The projective scheme $\mathscr{M}^{\spl,[2h]}_n$ is flat, normal, Cohen-Macaulay and its special fiber is reduced (see \cite[Theorem 5.0.1]{ZacZhao2}). Denote by $\mathscr{M}^{\bl,[2h]}_n$ the blow-up of $\mathscr{M}^{\spl,[2h]}_n$ along $(\tau')^{-1}(*) $. Let $ r_1: \mathscr{M}^{\bl,[2h]}_n \rightarrow \mathscr{M}^{\spl,[2h]}_n $ be the blow-up morphism. The projective scheme $\mathscr{M}^{\bl,[2h]}_n$ is regular and has special fiber a reduced divisor with normal crossings. We refer to  \cite[\S 7]{ZacZhao2} for more details on $\mathscr{M}^{\bl,[2h]}_n$.
\begin{Proposition}
The projective scheme $\M^{\spl,[2h]}_n$ is equal to $ \mathscr{M}^{\bl,[2h]}_n$.
\end{Proposition}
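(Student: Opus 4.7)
The plan is to deduce $\mathscr{M}^{\bl,[2h]}_n\simeq\M^{\spl,[2h]}_n$ from two symmetric applications of the universal property of blow-ups, exploiting Theorem~\ref{thm 3.6}, which identifies $\tau:\M^{\spl,[2h]}_n\to\M^{\loc,[2h]}_n$ with the blow-up of $\M^{\loc,[2h]}_n$ at the worst point $*$. I would first construct comparison morphisms $f:\M^{\spl,[2h]}_n\to\mathscr{M}^{\bl,[2h]}_n$ and $g:\mathscr{M}^{\bl,[2h]}_n\to\M^{\spl,[2h]}_n$, and then verify that they are mutually inverse using the uniqueness built into the respective universal properties.

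To construct $f$, I would use that $\tau^{-1}(*)=\Exc$ is an effective Cartier divisor on $\M^{\spl,[2h]}_n$; by the commutativity $\tau=\tau'\circ\psi$ of diagram~(\ref{Rlt601}), the scheme-theoretic preimage $\psi^{-1}((\tau')^{-1}(*))$ coincides with $\tau^{-1}(*)=\Exc$, hence is Cartier. The universal property of the blow-up $r_1:\mathscr{M}^{\bl,[2h]}_n\to\mathscr{M}^{\spl,[2h]}_n$ along $(\tau')^{-1}(*)$ then produces a unique $f$ with $r_1\circ f=\psi$. Symmetrically, the exceptional divisor of $r_1$ is Cartier, so $(\tau'\circ r_1)^{-1}(*)=r_1^{-1}((\tau')^{-1}(*))$ is Cartier on $\mathscr{M}^{\bl,[2h]}_n$; applying the universal property of $\tau$ to $\tau'\circ r_1$ yields a unique $g$ with $\tau\circ g=\tau'\circ r_1$.

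The identity $g\circ f=\mathrm{id}_{\M^{\spl,[2h]}_n}$ then follows from $\tau\circ(g\circ f)=\tau'\circ r_1\circ f=\tau'\circ\psi=\tau$ by the uniqueness clause in the universal property of $\tau$. For the other direction I would compare $\psi\circ g$ with $r_1$ as morphisms $\mathscr{M}^{\bl,[2h]}_n\to\mathscr{M}^{\spl,[2h]}_n$: they satisfy $\tau'\circ(\psi\circ g)=\tau\circ g=\tau'\circ r_1$, and since $\tau'$ is an isomorphism over $\M^{\loc,[2h]}_n\setminus\{*\}$, the two maps agree on the dense open complement of $r_1^{-1}((\tau')^{-1}(*))$. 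Because $\mathscr{M}^{\bl,[2h]}_n$ is reduced (being regular by \cite[\S 7]{ZacZhao2}) and $\mathscr{M}^{\spl,[2h]}_n$ is separated, the equalizer is closed and dense, hence everything, so $\psi\circ g=r_1$; uniqueness in the universal property of $r_1$ then gives $f\circ g=\mathrm{id}$.

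The main point that requires care is the scheme-theoretic identity $\psi^{-1}((\tau')^{-1}(*))=\tau^{-1}(*)$, needed to activate the universal property of $r_1$. Conceptually it is automatic from $\tau=\tau'\circ\psi$ and the compatibility of scheme-theoretic preimage with composition, but in practice one should check that the worst-point locus and its preimages carry compatible scheme structures in both model constructions; this should be transparent from the explicit affine charts in \S\ref{AffineCharts3.3} and in \cite{ZacZhao2}. Once this is verified, the rest of the argument is formal.
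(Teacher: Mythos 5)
Your construction of the two comparison morphisms is exactly the paper's: both proofs apply the universal property of the blow-up twice (to $\tau'\circ r_1$ against $\tau$, and to $\psi$ against $r_1$, using that scheme-theoretic preimages compose so that $\psi^{-1}((\tau')^{-1}(*))=\tau^{-1}(*)$ is Cartier), and both establish $g\circ f=\mathrm{id}$ by the uniqueness clause for $\tau$. Where you genuinely diverge is the final step. The paper stops after $g\circ f=\mathrm{id}$: it observes that $f$ (their $h$) is then a section of the projective, hence separated, morphism $g$, so $f$ is a closed immersion, and concludes equality because $\M^{\bl,[2h]}_n$ and $\mathscr{M}^{\bl,[2h]}_n$ are integral flat $O_F$-schemes of the same relative dimension. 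You instead prove $f\circ g=\mathrm{id}$ directly: you first show $\psi\circ g=r_1$ by noting that both become equal after composing with $\tau'$, that $\tau'$ is an isomorphism away from $(\tau')^{-1}(*)$, and that two morphisms from a reduced scheme to a separated scheme agreeing on a dense open set coincide; then uniqueness in the universal property of $r_1$ forces $f\circ g=\mathrm{id}$. Both arguments are correct. Yours has the advantage of avoiding the dimension count and the appeal to integrality of both models (you only need $\mathscr{M}^{\bl,[2h]}_n$ reduced and $\mathscr{M}^{\spl,[2h]}_n$ separated), at the cost of the extra density argument; the paper's route is shorter once one grants the standard facts about sections of separated morphisms.
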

\begin{proof}
By Theorem \ref{thm 3.6} we have $\M^{\spl,[2h]}_n = \M^{\bl,[2h]}_n$ where $\M^{\bl,[2h]}_n$ is the blow-up of $\M^{\loc,[2h]}_n$ along $*$. (Below, we use these terms interchangeably.) Set $ r = \tau' \circ r_1: \mathscr{M}^{\bl,[2h]}_n \rightarrow \M^{\loc,[2h]}_n$. We have $ r^{-1}(*) = r_1^{-1}((\tau')^{-1}(*))$ and $ r^{-1}(*)$ is an effective Cartier divisor since $r_1$ is the blow-up morphism of $\mathscr{M}^{\spl,[2h]}_n$ along $(\tau')^{-1}(*)$. Thus, from the universal property of the blow-up there is a unique morphism $g:  \mathscr{M}^{\bl,[2h]}_n \xrightarrow{} \M^{\bl,[2h]}_n $ such that $ r= \tau \circ g$ and the following diagram is commutative:
\begin{equation}\label{Rlt602}
\begin{tikzcd}
\mathscr{M}^{\bl,[2h]}_n \arrow{r}{g} \arrow{d}[swap]{r_1} & \M^{\bl,[2h]}_n \arrow{d}{\tau} \\
\mathscr{M}^{\spl,[2h]}_n \arrow{r}[swap]{\tau'}            &  \M^{\loc,[2h]}_n
\end{tikzcd}.
\end{equation}

Now observe from (\ref{Rlt601}) that $ \psi^{-1}((\tau')^{-1}(*)) = (\tau'\circ \psi )^{-1}(*) = \tau^{-1}(*)$ is an effective Cartier divisor and so, as above, there is a unique morphism $ h: \M^{\bl,[2h]}_n \rightarrow \mathscr{M}^{\bl,[2h]}_n $  such that $ \psi = r_1 \circ h$, i.e., the following diagram is commutative:
\begin{equation}\label{Rlt603}
\begin{tikzcd}
\M^{\bl,[2h]}_n \arrow{r}{h} \arrow[swap]{dr}{\psi} & 
\mathscr{M}^{\bl,[2h]}_n \arrow{d}{r_1} \\
& \mathscr{M}^{\spl,[2h]}_n 
\end{tikzcd}.
\end{equation}
We claim that $h$ is a section of $g$. To see this consider 
\[
\tau\circ(g \circ h) = (\tau \circ g ) \circ h \overset{(\ref{Rlt602})}{=}(\tau' \circ r_1) \circ h = \tau' \circ (r_1 \circ h)  \overset{(\ref{Rlt603})}{=} \tau' \circ \psi  \overset{(\ref{Rlt601})}{=} \tau.
\]
By the universal property of the blow-up $ \tau: \M^{\bl,[2h]}_n \rightarrow \M^{\loc,[2h]}_n $, the only morphism $ g \circ h : \M^{\bl,[2h]}_n  \rightarrow\M^{\bl,[2h]}_n $ with $ \tau \circ ( g \circ h ) = \tau $ is $  g \circ h = \text{id}_{\M^{\bl,[2h]}_n }$. Thus, the claim follows.

From the above we see that the projective (hence separated) morphism $g:  \mathscr{M}^{\bl,[2h]}_n \xrightarrow{} \M^{\bl,[2h]}_n $ has a section $h$. Thus, $ h: \M^{\bl,[2h]}_n \rightarrow \mathscr{M}^{\bl,[2h]}_n $ is a closed immersion. Since $\M^{\bl,[2h]}_n ,\,\mathscr{M}^{\bl,[2h]}_n$ are integral flat schemes over $O_F$ with the same relative dimension we conclude that $\M^{\bl,[2h]}_n =\mathscr{M}^{\bl,[2h]}_n$.
\end{proof}
\begin{Remark}
{\rm
We want to mention another route that one can follow to obtain the BT stratification of the RZ space $\calN_{n, {\rm red}}^\spl$ of $\M^{\spl,[2h]}_n $ given in Theorem \ref{Thm BT}. From the above proposition we have $\M^{\spl,[2h]}_n =\mathscr{M}^{\bl,[2h]}_n$. In \cite{ZacZhao3}, we construct the BT-stratification of the RZ space $\mathscr{N}_{n, {\rm red}}$ of $\mathscr{M}^{\spl,[2h]}_n$. From this, we can define the BT-strata of $\calN_{n, {\rm red}}^\bl$ as certain strict transforms of the corresponding BT strata of $\mathscr{N}_{n, {\rm red}}$ as in the construction of the $\calZ$-strata (resp. $\calY$-strata). Then using the corresponding strata splitting models $\calZ^\spl(\Lambda)$ (resp. $\calY^\spl(\Lambda^\sharp)$) constructed in \cite{ZacZhao3} one can recover Theorem \ref{Thm BT}.
}
\end{Remark}

\Addresses
\end{document}